\documentclass[11pt, twoside, leqno]{article}

\usepackage{graphics}
\usepackage{graphicx}
\usepackage{amssymb}
\usepackage{amsmath}
\usepackage{amsthm}
\usepackage{color}
\usepackage{mathrsfs}

\usepackage{indentfirst}

\usepackage{txfonts}

\allowdisplaybreaks

\pagestyle{myheadings}\markboth{\footnotesize\rm\sc
Jin Tao, Dachun Yang, Wen Yuan and Yangyang Zhang}
{\footnotesize\rm\sc Compact Commutators on Ball Banach Function Spaces
}

\textwidth=15cm
\textheight=21cm
\oddsidemargin 0.46cm
\evensidemargin 0.46cm

\parindent=13pt

\def\red{\color{red}}

\def\rr{{\mathbb R}}
\def\rn{{\mathbb{R}^n}}

\def\zz{{\mathbb Z}}
\def\cc{{\mathbb C}}

\def\nn{{\mathbb N}}

\def\cf{{\mathcal F}}
\def\cg{{\mathcal G}}

\def\cm{{\mathcal M}}

\def\fz{\infty }

\def\dz{\delta}

\def\ez{\epsilon}

\def\lz{\lambda}

\def\oz{{\omega}}

\def\tz{\theta}

\def\lf{\left}
\def\r{\right}

\def\ls{\lesssim}

\def\noz{\nonumber}

\def\BMO{\mathop\mathrm{\,BMO\,}}

\def\loc{{\mathop\mathrm{\,loc\,}}}
\def\supp{\mathop\mathrm{\,supp\,}}

\def\XXint#1#2#3{{\setbox0=\hbox{$#1{#2#3}{\int}$ }
\vcenter{\hbox{$#2#3$ }}\kern-.6\wd0}}

\DeclareMathOperator{\esssup}{ess\,sup}
\DeclareMathOperator{\essinf}{ess\,inf}

\def\({\left(}
\def \){ \right)}
%%%%%%%%%%%Greece letter%%%%%%%%%%

\def\lz{{\lambda}}

\def\BB{{\mathbb B}}

 %%%%function/operator%%%%%

%%%%%% spectial %%%%

%\def\meas{\operatorname{meas}}

%%%%%%%%%%%%%%%%%
 %%%%function/operator%%%%%
% \def\grad{\operatorname{grad}}
% \def\proj{\operatorname{proj}}
% \def\tan{\operatorname{tan}}
% \def\supp{\operatorname{supp}}
% \def\dim{\operatorname{dim}}
% \def\kel{\operatorname{Kel}}
% \def\Span{\operatorname{span}}
% \def\sign{\operatorname{sign}}

\newtheorem{theorem}{Theorem}[section]
\newtheorem{lemma}[theorem]{Lemma}

\newtheorem{assumption}[theorem]{Assumption}
\newtheorem{proposition}[theorem]{Proposition}

\theoremstyle{definition}
\newtheorem{remark}[theorem]{Remark}
\newtheorem{definition}[theorem]{Definition}
\renewcommand{\appendix}{\par
   \setcounter{section}{0}%
   \setcounter{subsection}{0}%
   \setcounter{subsubsection}{0}%
   \gdef\thesection{\@Alph\c@section}%
   \gdef\thesubsection{\@Alph\c@section.\@arabic\c@subsection}%
   \gdef\theHsection{\@Alph\c@section.}%
   \gdef\theHsubsection{\@Alph\c@section.\@arabic\c@subsection}%
   \csname appendixmore\endcsname
 }

\numberwithin{equation}{section}

\begin{document}

\title{\bf\Large Compactness Characterizations of
Commutators on Ball Banach Function Spaces
\footnotetext{\hspace{-0.35cm} 2020 {\it
Mathematics Subject Classification}. Primary 47B47;
Secondary 42B20, 42B25, 42B30, 42B35, 46E30.
\endgraf {\it Key words and phrases.} ball Banach function space, commutator,
convolutional singular integral operator,
BMO, CMO, extrapolation, Fr\'{e}chet--Kolmogorov theorem.
\endgraf This project is supported by the National
Natural Science Foundation of China (Grant Nos. 11971058, 12071197 and 11871100)
and the National Key Research
and Development Program of China
(Grant No. 2020YFA0712900).}}
\author{Jin Tao, Dachun Yang\footnote{Corresponding author,
E-mail: \texttt{dcyang@bnu.edu.cn}/{\red January 19, 2021}/Final version.},
\ Wen Yuan and Yangyang Zhang}
\date{}
\maketitle

\vspace{-0.7cm}

\begin{center}
\begin{minipage}{13cm}
{\small {\bf Abstract}\quad
Let $X$ be a ball Banach function space on ${\mathbb R}^n$. Let $\Omega$ be
a Lipschitz function on the unit sphere of ${\mathbb R}^n$,
which is homogeneous of degree zero and has mean value zero, and
let $T_\Omega$ be the convolutional singular integral operator
with kernel $\Omega(\cdot)/|\cdot|^n$. In this article, under the assumption
that the Hardy--Littlewood maximal operator $\mathcal{M}$
is bounded on both $X$ and its associated space,
the authors prove that the commutator $[b,T_\Omega]$
is compact on $X$ if and only if $b\in{\rm CMO}({\mathbb R}^n)$.
To achieve this, the authors mainly employ three key tools:
some elaborate estimates, given in this article,
on the norm in $X$ of the commutators and the characteristic functions of
some measurable subset,
which are implied by the assumed boundedness
of ${\mathcal M}$ on $X$ and its associated space as well as the geometry of
$\mathbb R^n$; the complete John--Nirenberg inequality in $X$
obtained by Y. Sawano et al.; the generalized Fr\'{e}chet--Kolmogorov
theorem on $X$ also established in this article.
All these results have a wide range of applications. Particularly,
even when $X:=L^{p(\cdot)}({\mathbb R}^n)$ (the variable Lebesgue space),
$X:=L^{\vec{p}}({\mathbb R}^n)$ (the mixed-norm Lebesgue space),
$X:=L^\Phi({\mathbb R}^n)$ (the Orlicz space),
and $X:=(E_\Phi^q)_t({\mathbb R}^n)$ (the Orlicz-slice space
or the generalized amalgam space), all these results are new.}
\end{minipage}
\end{center}

\vspace{0.1cm}

\tableofcontents

\vspace{0.1cm}

\section{Introduction\label{s1}}

Let $\Omega$ be a Lipschitz function on the unit sphere of ${\mathbb R}^n$,
which is homogeneous of degree zero and has mean value zero,
namely,
\begin{align}\label{lip}
|\Omega(x)-\Omega(y)|\le|x-y|\ {\rm for\ any\ } x,\ y\in\mathbb{S}^{n-1},
\end{align}
\begin{equation}\label{deg0}
\Omega(\mu x):=\Omega(x)\ {\rm for\ any\ } \mu\in(0,\infty)
\ {\rm and\ }x\in\mathbb{S}^{n-1},
\end{equation}
and
\begin{equation}\label{mean0}
\int_{\mathbb{S}^{n-1}}\Omega(x)\,d\sigma(x)=0,
\end{equation}
here and thereafter,
$\mathbb{S}^{n-1}:=\{x\in\rn:\ |x|=1\}$ denotes the unit sphere in $\rn$
and $d\sigma$ the area measure on $\mathbb{S}^{n-1}$.
To study the factorization theorem of the Hardy space,
Coifman et al. \cite{crw} initiated the study
of commutator $[b,T_\Omega](f):=bT_\Omega(f)-T_\Omega(bf)$, where
$b\in{\rm BMO}(\rn)$ and $T_\Omega$ denotes the Calder\'on--Zygmund operator defined by setting,
for any suitable function $f$ and any $x\in\rn$,
\begin{align}\label{T-def}
T_\Omega(f)(x):=\text{p.\,v.}\,\int_{\rn}\frac{\Omega(x-y)}{|x-y|^n}f(y)\,dy:=
\lim_{\varepsilon\to0^+}\int_{\varepsilon<|x-y|<1/\varepsilon}\frac{\Omega(x-y)}{|x-y|^n}f(y)\,dy,
\end{align}
here and thereafter, $\varepsilon\to0^+$ means $\varepsilon\in(0,\fz)$ and $\varepsilon\to0$.
The commutator of this type plays key roles in harmonic analysis
(see, for instance, \cite{an18,an20,cd15,gos15,ns17,ss08}),
partial differential equations
(see, for instance, \cite{cdd20,cdh16,tyy20}),
and quasiregular mappings
(see, for instance, \cite{i92}).

The first significant result in this direction was made
by Coifman et al. \cite{crw}, which characterizes the boundedness of
such type commutators on the Lebesgue space $L^p(\rn)$ with $p\in(1,\fz)$,
via the well-known space $\BMO(\rn)$.
Recall that the space $\BMO(\rn)$,
introduced by John and Nirenberg \cite{JN},
is defined to be
the set of all locally integrable functions $f$ on $\rn$ such that
\begin{equation*}
\|f\|_{\BMO(\rn)}:=\sup_{\mathrm{ball\,}B\subset\rn}
\frac{1}{|B|}\int_{B}|f(x)-f_B|\,dx<\fz,
\end{equation*}
where the supremum is taken over all balls $B\subset\rn$,
and $f_B:=\frac{1}{|B|}\int_{B}f(x)\,dx$ for any given ball $B\subset\rn$.
Precisely, Coifman et al. \cite{crw} proved that, if a function $b\in {\rm BMO}(\mathbb R^n)$, then the
commutator $[b,T_\Omega]$ is bounded on $L^p(\mathbb R^n)$ for any given $p\in(1, \infty)$,
and also that, if $[b, R_j]$ is bounded on $L^p(\mathbb R^n)$
for any Riesz transform $R_ j$, $j\in\{1,\ldots, n\}$,
then $b\in {\rm BMO}(\mathbb R^n)$.
Moreover, Uchiyama \cite{u} proved that $[b, T_\Omega]$ is bounded on $L^p(\rn)$
for any given $p\in(1,\infty)$ if and only if $b\in\BMO(\rn)$.
Later, such boundedness characterizations were also established
on various function spaces: for instance,
Di Fazio and Ragusa \cite{dr91} on Morrey spaces,
Lu et al. \cite{ldy} on weighted Lebesgue spaces,
and Karlovich and Lerner \cite{KL05} on variable Lebesgue spaces.

As for the compactness characterizations of commutators,
Uchiyama \cite{u} first proved that
$[b, T_\Omega]$ is compact on $L^p(\rn)$ for any given $p\in(1,\infty)$
if and only if $b\in{\rm CMO}(\rn)$,
where ${\rm CMO}(\rn)$ denotes the closure of smooth functions
with compact support in $\BMO(\rn)$.
This characterization of compactness was also extended to Morrey spaces in \cite{CDW12},
and to weighted Lebesgue spaces in \cite{gwy20, CC13}.
However, to the best of our knowledge, for other known function spaces,
such as mixed-norm Lebesgue spaces, variable Lebesgue spaces,
Orlicz spaces, and Orlicz-slice spaces
(see, respectively, Subsections \ref{s5.2}, \ref{s5.3}, \ref{s5.5x},
and \ref{s5.5} below for their histories and definitions),
the equivalent characterization of the compactness of commutators corresponding to
these aforementioned spaces
are still unknown so far. Therefore, it is natural to ask whether or not
there exists a unified theory on
the equivalent characterization for the boundedness and the compactness of commutators
on all aforementioned function spaces.
In this article, we give an affirmative answer to this question
on so-called ball Banach function spaces.

Recall that the ball (quasi-)Banach function space
was introduced by Sawano et al. \cite{SHYY}
(see also Definition \ref{Debqfs} below),
which contains all aforementioned function spaces
as special cases.
For more studies of ball Banach function spaces,
we refer the readers to \cite{s,is,ins,WYYZ,ZYYW}.
Very recently, Chaffee and Cruz-Uribe \cite{CC18},
and Guo et al. \cite{glw} studied the necessity of
the boundedness of commutators on ball Banach function spaces.
However, the sufficiency of the boundedness
of commutators on the ball Banach function space $X$ and
the equivalent characterization of their compactness on $X$
are still unknown.

In what follows, we always let $(X,\|\cdot\|_X)$ be a ball Banach function space
satisfying either or both of the following additional assumptions
(we will explicitly indicate this in the context).
\begin{assumption}\label{assum}
\begin{enumerate}
\item[{\rm(i)}] The Hardy--Littlewood maximal operator $\cm$
[see \eqref{mm} below for its definition]
is bounded on $X$ and $X'$;
here and thereafter, $X'$ denotes the associate space of $X$
(see Definition \ref{def-X'} below for its definition).

\item[{\rm(ii)}] There exists an $s\in(1,\infty)$ such that
$X^{1/s}$ is a ball Banach function space,
where $X^{1/s}$ denotes the $\frac 1s$-convexification of $X$
(see Definition \ref{Debf} below for its definition),
and that $\cm$ is bounded on $(X^{1/s})'$.
\end{enumerate}
\end{assumption}
Motivated by the aforementioned results, in this article,
we establish the following equivalent characterizations of the boundedness and
the compactness of commutators on $X$.
\begin{theorem}\label{thm0}
Let $X$ be a ball Banach function space satisfying Assumption \ref{assum},
$\Omega$ a homogeneous function of degree zero satisfying \eqref{lip}, \eqref{deg0}, and \eqref{mean0},
$T_\Omega$ as in \eqref{T-def}, and $b\in L^1_\loc(\rn)$.
Then
\begin{itemize}
\item[{\rm(i)}]
$[b,T_\Omega]$ is bounded on $X$ if and only if $b\in{\rm BMO}({\mathbb R}^n)$;

\item[{\rm(ii)}]
$[b,T_\Omega]$ is compact on $X$ if and only if $b\in{\rm CMO}({\mathbb R}^n)$.
\end{itemize}
\end{theorem}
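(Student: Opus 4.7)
The plan is to prove the two parts separately, with each direction handled by a distinct strategy. The boundedness statement (i) will be established via extrapolation plus existing necessity results, while the compactness statement (ii) requires substantially more delicate work; its necessity direction is where I anticipate the main obstacle.

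For (i), the sufficiency will proceed by Rubio de Francia extrapolation: Assumption \ref{assum}(i) is precisely the hypothesis under which a Muckenhoupt-type extrapolation theorem holds on $X$. Starting from the classical Coifman--Rochberg--Weiss bound of $[b,T_\Omega]$ on the weighted space $L^p(\rn,w)$ for $b\in\BMO(\rn)$ and $w\in A_p$, one transfers boundedness to $X$. The necessity direction can be handled by invoking the results of Chaffee--Cruz-Uribe \cite{CC18} and Guo et al.\ \cite{glw} already cited in the introduction.

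For (ii), the sufficiency ($b\in{\rm CMO}(\rn)\Rightarrow[b,T_\Omega]$ compact on $X$) will rest on a standard approximation reduction: choose $b_k\in C_c^\infty(\rn)$ converging to $b$ in $\BMO(\rn)$; by part (i), $[b-b_k,T_\Omega]\to0$ in operator norm on $X$, so, since compact operators form a norm-closed subspace of $\mathcal{B}(X)$, it suffices to handle $b\in C_c^\infty(\rn)$. For such $b$ I will invoke the generalized Fr\'echet--Kolmogorov theorem on $X$ established in the paper, verifying, for the image of the $X$-unit ball under $[b,T_\Omega]$: (a) $X$-boundedness, immediate from (i); (b) equi-smallness of tails outside large balls, using the compact support of $b$ together with the kernel decay of $T_\Omega$; and (c) equicontinuity in translations, using the Lipschitz regularity of $b$ combined with the Calder\'on--Zygmund kernel estimates on $\Omega(\cdot)/|\cdot|^n$. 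Each pointwise input gets upgraded to an $X$-norm estimate through Assumption \ref{assum}(ii), via the boundedness of $\cm$ on $(X^{1/s})'$ and the duality $(X^{1/s})''=X^{1/s}$.

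The main obstacle is the necessity in (ii): $[b,T_\Omega]$ compact on $X$ $\Rightarrow$ $b\in{\rm CMO}(\rn)$. Arguing by contraposition, suppose $b\in\BMO(\rn)\setminus{\rm CMO}(\rn)$, so that $b$ violates at least one of the three vanishing-mean-oscillation conditions that characterise ${\rm CMO}(\rn)$ (oscillation failing to vanish at small scales, at large scales, or at infinity). In each failure regime I will select a sequence of balls $\{B_k\}$ along which $\frac{1}{|B_k|}\int_{B_k}|b-b_{B_k}|\,dx$ stays bounded below, and, using the complete John--Nirenberg inequality of Sawano et al.\ in $X$, extract measurable subsets $E_k\subset B_k$ on which $b-b_{B_k}$ has controlled sign and magnitude. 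The test functions $f_k:=\chi_{E_k}/\|\chi_{B_k}\|_X$ will be uniformly bounded in $X$, and the elaborate commutator and characteristic-function norm estimates promised in the abstract, obtained from Assumption \ref{assum} together with the geometry of the $B_k$, will then show that the pairwise $X$-distances of $\{[b,T_\Omega]f_k\}$ remain bounded below, precluding any $X$-Cauchy subsequence and contradicting compactness.
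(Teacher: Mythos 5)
Your overall architecture matches the paper's (extrapolation from weighted bounds for the sufficiency in (i); approximation in operator norm plus a Fr\'echet--Kolmogorov criterion in $X$ for the sufficiency in (ii); contraposition through the three-condition characterization of ${\rm CMO}(\rn)$ with separated test functions for the necessity in (ii)), but two of your steps have genuine gaps. First, in the sufficiency of (ii) you propose to verify translation equicontinuity directly for $[b,T_\Omega]$ with $b\in C^\infty_{\rm c}(\rn)$. As sketched this fails: when you estimate $[b,T_\Omega]f(\cdot+\xi)-[b,T_\Omega]f(\cdot)$, the near-diagonal contribution around the shifted singularity $y\approx x+\xi$ is controlled (via the reduced singularity $|x-y|^{1-n}$) only by terms of the form $|\xi|\,\cm f(x+\xi)$, and $\|\cm f(\cdot+\xi)\|_X$ is \emph{not} dominated by $\|f\|_X$ because a ball Banach function space need not be translation invariant -- precisely the obstruction the paper circumvents by first replacing $T_\Omega$ with the smoothly truncated operator $T_\Omega^{(\kappa)}$, proving $\|[b,T_\Omega]-[b,T_\Omega^{(\kappa)}]\|_{X\to X}\lesssim\kappa$ (Lemma \ref{Teta}), and running the Fr\'echet--Kolmogorov argument only for $T_\Omega^{(\kappa)}$, whose kernel vanishes near the diagonal so that every error term is $\cm f(x)$ or $T^\ast_\Omega f(x)$ at the \emph{unshifted} point. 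Without this truncation (or an equivalent device) your condition (c) does not go through.

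Second, in the necessity of (ii) the mechanism you name cannot produce the required lower bounds. The John--Nirenberg inequality in $X$ does not extract subsets on which the \emph{kernel} has a fixed sign: since $\Omega$ has mean zero it must change sign on $\mathbb{S}^{n-1}$, so testing with $\chi_{E_k}$ for $E_k\subset B_k$ and measuring the output on an unspecified region gives no control of the sign of $\Omega(x-y)$, hence no lower bound for any restriction of $[b,T_\Omega]f_k$. What is needed is the Uchiyama-type geometric construction (the paper's Lemma \ref{thmb3}): a set $F$ inside a ball displaced from $B_k$ by about $k_0r_k$ in a direction dictated by the open subset of $\mathbb{S}^{n-1}$ where $\Omega$ keeps its sign, together with $E\subset B_k$, so that $b(x)-b(y)$ and $\Omega(\frac{x-y}{|x-y|})$ are single-signed and $|\Omega|\ge\varepsilon_0$ on a large portion of $E\times F$; this is what yields the lower estimate $\oz_{\lz}(b;B_k)\|\mathbf{1}_{F}\|_X\ls\|[b,T_\Omega](\mathbf{1}_F)\mathbf{1}_{E\setminus Q}\|_X$ (Proposition \ref{thmm2}), with the test function supported in $F$, disjoint from where the commutator is evaluated. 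You also need the companion decay estimate (Proposition \ref{thmm3}) -- and it is there, not in any subset extraction, that the John--Nirenberg/${\rm BMO}_X$ equivalence of Sawano et al.\ enters -- to make $\|[b,T_\Omega]f_k\|_X$ small off a large dilate of $B_k$, which is what lets the triangle inequality separate $[b,T_\Omega]f_j$ from $[b,T_\Omega]f_k$. Finally, outsourcing the necessity in (i) to \cite{CC18} and \cite{glw} is risky: \cite{CC18} is formulated for Banach function spaces (so it misses Morrey and weighted examples covered by Theorem \ref{thm0}), and \cite{glw} imposes additional hypotheses beyond Assumption \ref{assum}; as Remark \ref{guo}(i) records, Theorem \ref{thm-bdd-2} and \cite{glw} do not cover each other, so you must either verify those extra hypotheses under Assumption \ref{assum} or argue directly via the local mean oscillation and Lemma \ref{rh2} as the paper does.
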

Indeed, we prove Theorem \ref{thm0} under much weaker assumptions on $\Omega$;
see Theorems \ref{thm-bdd-1} and \ref{thm-bdd-2}
[and also Remark \ref{guo}(ii)] below for the boundedness,
as well as Theorems \ref{thm-cpt} and \ref{thm-cpt2}
(and also Remark \ref{rem-omega-cpt}) below for the compactness.
To obtain these results, we need to overcome the essential difficulty
caused by the lack of the explicit expression of the norm of $X$,
via mainly employing three key tools:
some elaborate lower and upper estimates,
obtained in Propositions \ref{thmm2} and \ref{thmm3} below,
on the norm in $X$ of the commutators and the characteristic functions of
some measurable subset,
which are implied by the assumed boundedness
of ${\mathcal M}$ on $X$ and its associated space as well as the geometry of
$\mathbb R^n$; the complete John--Nirenberg inequality in $X$
obtained by Sawano et al. in \cite{ins}; the generalized Fr\'{e}chet--Kolmogorov
theorem on $X$ established in Theorem \ref{l-fre} below.
All these results have a wide range of applications,
which not only recover several well-known results
but also yield some new ones. Particularly,
even when $X:=L^{p(\cdot)}({\mathbb R}^n)$ (the variable Lebesgue space),
$X:=L^{\vec{p}}({\mathbb R}^n)$ (the mixed-norm Lebesgue space),
$X:=L^\Phi({\mathbb R}^n)$ (the Orlicz space),
and $X:=(E_\Phi^q)_t({\mathbb R}^n)$ (the Orlicz-slice space
or the generalized amalgam space), all these results are new.
It should be mentioned that, applying the necessity of boundedness,
obtained in Theorem \ref{thm-bdd-2} below,
into six concrete examples of ball Banach function spaces in Section \ref{s5},
we obtain even better results than \cite{CC18} and \cite{glw}
for the necessity of the boundedness of commutators.
In addition, the equivalent characterization of the compactness,
obtained in Theorems \ref{thm-cpt} and \ref{thm-cpt2} below,
coincides with Guo et al. \cite[Theorems 1.4 and 1.5]{gwy20}
about the convolutional singular integral operator on the weighted Lebesgue space.

To be precise, the remainder of this article is organized as follows.

In Section \ref{s2},
we first show that $[b,T_\Omega]$ is bounded on $X$
for any given $b\in{\rm BMO}(\rn)$ in Theorem \ref{thm-bdd-1}
via the extrapolation theorem.
It should be pointed out that ball Banach function spaces
are embedded into weighted Lebesgue spaces (see Lemma \ref{embed1} below),
which guarantees that the considered Calder\'on--Zygmund commutator is well defined on
ball Banach function spaces (see Proposition \ref{thm-bdd-0} below).
Observe that the extrapolation theorem plays an essential role
in establishing the boundedness of operators on ball Banach function spaces,
which is a bridge connecting the ball Banach function space and
the weighted Lebesgue space.
Moreover, combining the technique of the local mean oscillation as in \cite{lor,gwy20}
and a fine inequality on the norm in $X$ (see Lemma \ref{rh2} below),
we also show that, if $[b,T_\Omega]$ is bounded on $X$,
then $b\in\BMO(\rn)$ in Theorem \ref{thm-bdd-2} below.
As a consequence, Theorem \ref{thm0}(ii) is a direct corollary of
Theorems \ref{thm-bdd-1} and \ref{thm-bdd-2}.

Section \ref{s3} is devoted to Theorem \ref{thm0}(ii)
which can be easily deduced from two more general results:
Theorem \ref{thm-cpt} below (the sufficiency)
and Theorem \ref{thm-cpt2} below (the necessity).
To prove these, we need to overcome some essential difficulties
by borrowing some basic ideas from the proof of
the recent result on the weighted Lebesgue space given by Guo et al. \cite{gwy20};
see also Uchiyama \cite{u} for the corresponding one on the Lebesgue space,
and Chen et al. \cite{CDW12} for the corresponding one on the Morrey space.
However, their calculations are no longer completely feasible for the ball
Banach function space $X$ because they need to use the following three crucial properties of
the considered norm, which are not available for $\|\cdot\|_X$:
the Lebesgue dominated convergence theorem,
the translation invariance, and the explicit expression of the norm.
In the proof of Theorem \ref{thm-cpt},
using a skillful decomposition and the
smooth truncated technique given by Clop and Cruz \cite{CC13},
we avoid the translation invariance as in Uchiyama \cite{u}
or Chen et al. \cite{CDW12}. Moreover, we establish a new criterion
on the compactness of a set in the ball Banach function space $X$ (see Theorem \ref{l-fre} below),
which generalizes the Fr\'{e}chet--Kolmogorov theorem
in \cite{CDW12,CC13,gz20} to the present setting. Indeed, via establishing
a new Minkowski-type inequality
for ball quasi-Banach function spaces $X$
(see Lemma \ref{LeMI} below),
we drop the assumption that $X$ has a absolutely continuous norm
in \cite[Theorem 3.1]{gz20}.
In the proof of Theorem \ref{thm-cpt2},
since we do not have the aforementioned three key properties on the norm $\|\cdot\|_X$,
nearly all the corresponding calculations used in \cite{u,CDW12,gwy20} are unworkable
in the present setting. To overcome these difficulties, we need to improve the method used in \cite{gwy20}.
Indeed, we first establish the lower estimates of commutators in Proposition \ref{thmm2}
via the aforementioned technique of the local mean oscillation;
we then apply an equivalent characterization of BMO$(\rn)$
via the ball Banach function space obtained in \cite{ins} (see Lemma \ref{BMO-X} below)
to establish the upper estimates of commutators (see Proposition \ref{thmm3} below);
from Propositions \ref{thmm2} and \ref{thmm3}, we finally deduce
the desired necessity of the equivalent characterization on the compactness
of commutators.

In Section \ref{s5}, we apply all these results obtained in Sections \ref{s2} and \ref{s3},
respectively, to $X:=M_r^p({\mathbb R}^n)$ (the Morrey space) or to
$X:=L_\omega^p({\mathbb R}^n)$ (the weighted Lebesgue space), and we find
that, even for these well-known function spaces, some of our results also improve the known results
(see Remark \ref{rem-bdd-Morrey} below for more details).
Moreover, to the best of our knowledge,
when we apply all these results obtained in Sections \ref{s2} and \ref{s3},
respectively, to $X:=L^{p(\cdot)}({\mathbb R}^n)$ (the variable Lebesgue space),
$X:=L^{\vec{p}}(\rn)$ (the mixed-norm Lebesgue space),
$X:=L^\Phi({\mathbb R}^n)$ (the Orlicz space),
or $X:=(E_\Phi^r)_t({\mathbb R}^n)$ (the Orlicz-slice space or the generalized amalgam space),
all these results are totally new.

Finally, we make some conventions on notation.
Let $\nn:=\{1,2,\ldots\}$, $\zz_+:=\nn\cup\{0\}$,
and $\zz_+^n:=(\zz_+)^n$. We always denote by $C$ a \emph{positive constant}
which is independent of the main parameters, but it may vary from line to line.
We also use $C_{(\alpha,\beta,\ldots)}$ to denote a positive constant depending
on the indicated parameters $\alpha,\beta,\ldots.$ The \emph{symbol} $f\lesssim g$ means that $f\le Cg$.
If $f\lesssim g$ and $g\lesssim f$, we then write $f\sim g$.
If $f\le Cg$ and $g=h$ or $g\le h$, we then write $f\ls g\sim h$
or $f\ls g\ls h$, \emph{rather than} $f\ls g=h$
or $f\ls g\le h$. The \emph{symbol} $\lfloor s\rfloor$  for any $s\in\mathbb{R}$
denotes the largest integer not greater
than $s$. We use $\vec0_n$ to denote the \emph{origin} of $\rn$ and let
$\mathbb{R}^{n+1}_+:=\rn\times(0,\infty)$.
If $E$ is a subset of $\rn$, we denote by $\mathbf{1}_E$ its
characteristic function and by $E^\complement$ the set $\rn\setminus E$.
Furthermore,
for any $\alpha\in(0,\infty)$ and any ball $B:=B(x_B,r_B)$ in $\rn$, with $x_B\in\rn$ and
$r_B\in(0,\infty)$, we let $\alpha B:=B(x_B,\alpha r_B)$.
Finally, for any $q\in[1,\infty]$, we denote by $q'$ its \emph{conjugate exponent},
namely, $1/q+1/q'=1$.

\section{Boundedness characterization of commutators on ball Banach\\
function spaces\label{s2}}

In this section, we first present some known facts
on the ball quasi-Banach function space $X$ in Subsection \ref{s2.1},
and then establish the characterization of
the boundedness of commutators in Subsection \ref{s2.2}.

\subsection{Ball quasi-Banach function spaces\label{s2.1}}

We now recall some preliminaries on ball quasi-Banach function spaces
introduced in \cite{SHYY}.
Denote by the \emph{symbol} $\mathscr M(\rn)$ the set of
all measurable functions on $\rn$.
For any $x\in\rn$ and $r\in(0,\infty)$, let $B(x,r):=\{y\in\rn:\ |x-y|<r\}$ and
\begin{equation}\label{Eqball}
\BB:=\lf\{B(x,r):\ x\in\rn\quad\text{and}\quad r\in(0,\infty)\r\}.
\end{equation}

\begin{definition}\label{Debqfs}
A quasi-Banach space $X\subset\mathscr M(\rn)$ is called a \emph{ball quasi-Banach function space} if it satisfies
\begin{itemize}
\item[(i)] $\|f\|_X=0$ implies that $f=0$ almost everywhere;
\item[(ii)] $|g|\le |f|$ almost everywhere implies that $\|g\|_X\le\|f\|_X$;
\item[(iii)] $0\le f_m\uparrow f$ almost everywhere implies that $\|f_m\|_X\uparrow\|f\|_X$;
\item[(iv)] $B\in\BB$ implies that $\mathbf{1}_B\in X$, where $\BB$ is as in \eqref{Eqball}.
\end{itemize}
Moreover, a ball quasi-Banach function space $X$ is called a
\emph{ball Banach function space} if the norm of $X$
satisfies the triangle inequality: for any $f,\ g\in X$,
\begin{equation}\label{eq22x}
\|f+g\|_X\le \|f\|_X+\|g\|_X,
\end{equation}
and, for any $B\in \BB$, there exists a positive constant $C_{(B)}$, depending on $B$, such that, for any $f\in X$,
\begin{equation*}\label{eq2.3}
\int_B|f(x)|\,dx\le C_{(B)}\|f\|_X.
\end{equation*}
\end{definition}

\begin{remark}\label{ball-bounded}
\begin{itemize}
\item[{\rm(i)}]
Observe that, in Definition \ref{Debqfs}, if we replace any ball $B$ by
any \emph{bounded} measurable set $E$, we obtain its another equivalent formulation.

\item[{\rm(ii)}]
Recall that a quasi-Banach space $X\subset\mathscr M(\rn)$ is called a \emph{quasi-Banach function space} if it
is a ball quasi-Banach function space and it satisfies Definition \ref{Debqfs}(iv) with ball
replaced by any measurable set of \emph{finite measure} (see, for instance, \cite[Chapter 1, Definitions 1.1 and 1.3]{BS}).
It is easy to see that every quasi-Banach function space is a ball quasi-Banach function space, and the converse
is not necessary to be true.
As was mentioned in \cite[p.\,9]{SHYY} and \cite[Section 5]{WYYZ}, the family of ball Banach function spaces includes Morrey spaces, mixed-norm Lebesgue spaces,  variable Lebesgue spaces,  weighted Lebesgue spaces, and Orlicz-slice spaces,
which are not necessary to be Banach function spaces.
\end{itemize}
\end{remark}

The following notion of the associate space of a ball Banach function space
can be found, for instance, in \cite[Chapter 1, Definitions 2.1 and 2.3]{BS}.

\begin{definition}\label{def-X'}
For any ball Banach function space $X$, the \emph{associate space} (also called the
\emph{K\"othe dual}) $X'$ is defined by setting
\begin{equation}\label{asso}
X':=\lf\{f\in\mathscr M(\rn):\ \|f\|_{X'}:=
\sup_{\{g\in X:\ \|g\|_X=1\}}\|fg\|_{L^1(\rn)}<\infty\r\},
\end{equation}
where $\|\cdot\|_{X'}$ is called the \emph{associate norm} of $\|\cdot\|_X$.
\end{definition}

\begin{remark}\label{bbf}
By \cite[Proposition 2.3]{SHYY}, we know that, if $X$ is a ball Banach function space,
then its associate space $X'$ is also a ball Banach function space.
\end{remark}

The following lemma is just \cite[Lemma 2.6]{ZWYY}.
\begin{lemma}\label{Lesdual}
Let $X$ be a ball quasi-Banach function space
satisfying the triangle inequality as in \eqref{eq22x}.
Then $X$ coincides with its second associate space $X''$.
In other words, a function $f$ belongs to $X$ if and only if it belongs to $X''$ and,
in that case,
$$
\|f\|_X=\|f\|_{X''}.
$$
\end{lemma}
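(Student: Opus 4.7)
The plan is to prove the two inclusions separately. The easy direction, $\|f\|_{X''}\le\|f\|_X$, follows directly from the definition. For any $g\in X'$, the defining supremum in \eqref{asso} gives the generalized H\"older inequality
$$
\int_{\rn}|f(x)g(x)|\,dx\le \|f\|_X\|g\|_{X'}
$$
for every $f\in X$. Taking the supremum over all $g\in X'$ with $\|g\|_{X'}\le 1$ yields $\|f\|_{X''}\le\|f\|_X$ and, in particular, $X\subset X''$.

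For the hard direction, $\|f\|_X\le\|f\|_{X''}$ whenever $f\in X''$, I would first reduce to a tractable truncation. Assuming without loss of generality that $f\ge 0$, I would set $f_k:=\min\{f,k\}\mathbf 1_{B(\vec 0_n,k)}$, so that $0\le f_k\uparrow f$ almost everywhere. Each $f_k$ is bounded with bounded support, hence belongs to $X$ by Definition~\ref{Debqfs}(ii) and (iv) together with the triangle inequality \eqref{eq22x}; it also belongs to $X''$, with $\|f_k\|_{X''}\le\|f\|_{X''}$ by Definition~\ref{Debqfs}(ii) applied in $X''$. By the Fatou property Definition~\ref{Debqfs}(iii), $\|f_k\|_X\uparrow\|f\|_X$, so it suffices to establish $\|f_k\|_X\le\|f_k\|_{X''}$ for each $k$.

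For a fixed truncation $h:=f_k$, which is now a bounded, nonnegative element of $X$ supported in a ball, the goal becomes producing, for each $\ve\in(0,\infty)$, a function $g\in X'$ with $\|g\|_{X'}\le 1$ and $\int_{\rn}hg\,dx\ge\|h\|_X-\ve$. This is the classical Lorentz--Luxemburg step. Since $X$ is Banach and $h\in X$, the Hahn--Banach theorem produces a norming functional $L\in X^{\ast}$ with $\|L\|_{X^\ast}=1$ and $L(h)=\|h\|_X$. The key point is then to represent $L$ as integration against some $g\in X'$ with $\|g\|_{X'}\le 1$: one defines the set function $\nu(E):=L(\mathbf 1_E)$ on bounded measurable sets, checks (using Definition~\ref{Debqfs}(iii) to handle monotone limits) that $\nu$ is countably additive and absolutely continuous with respect to Lebesgue measure on every ball, and then invokes Radon--Nikod\'ym locally to produce $g$ with $L(\phi)=\int\phi g\,dx$ for every bounded $\phi$ supported in a ball. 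Testing against $\phi=h$ gives the desired reverse inequality.

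The main obstacle is precisely the representation step: the abstract dual $X^\ast$ need not coincide with $X'$ in general, so one must exploit that the norming functional can be chosen to act on the lattice-sublattice of bounded functions supported in a ball, where countable additivity of $\nu$ follows from the Fatou property Definition~\ref{Debqfs}(iii). Once $g$ is constructed locally on each ball, a diagonal/monotone argument assembles a single $g\in X'$ with $\|g\|_{X'}\le 1$ that witnesses $\|h\|_X$ up to $\ve$, completing the proof.
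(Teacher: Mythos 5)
Your first direction and the reduction by truncation are fine, but the core of the hard direction has a genuine gap: the representation step does not work, and no appeal to Definition \ref{Debqfs}(iii) can fix it. A norming functional $L\in X^{\ast}$ produced by Hahn--Banach need not be representable as integration against a measurable function, because $X^{\ast}$ may contain singular (purely finitely additive) functionals; equivalently, the set function $\nu(E):=L(\mathbf 1_E)$ need not be countably additive. Countable additivity would require $L(\mathbf 1_{E_j})\to 0$ whenever $E_j\downarrow\emptyset$, and for that you would need $\|\mathbf 1_{E_j}\|_X\to0$, i.e.\ some form of absolute continuity of the norm --- which a ball Banach function space need not have and which the Fatou property (iii) (a statement about $\|f_m\|_X\uparrow\|f\|_X$ for increasing sequences, not about norm convergence $\|f-f_m\|_X\to0$) does not supply. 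A concrete counterexample to your scheme is $X=L^\infty(\rn)$ (a ball Banach function space with $X'=L^1(\rn)$): take $h$ bounded, supported in a ball, with essential supremum not attained on any set of positive measure; then no $g\in L^1$ with $\|g\|_{L^1}\le1$ norms $h$, so every norming functional for $h$ is singular and your $\nu$ is not countably additive. The lemma itself is still true there because the supremum defining $\|h\|_{X''}$ need not be attained --- which shows that constructing a (near-)norming $g$ by representing $L$ is the wrong mechanism.

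For comparison, the paper does not prove the lemma itself but quotes \cite[Lemma 2.6]{ZWYY}, which is the Lorentz--Luxemburg theorem in this setting; its proof (in the spirit of \cite[Chapter 1, Theorem 2.7]{BS}) runs the duality the other way around. After the same truncation, one supposes $\|h\|_X>\lambda$ and separates $h$ from the convex set $K:=\{g\in X:\ \|g\|_X\le\lambda,\ \supp g\subset B,\ |g|\le N\}$ \emph{inside} $L^1(B)$, where $K$ is closed in $L^1(B)$ precisely by the Fatou property (pass to an a.e.\ convergent subsequence). The Hahn--Banach separation theorem in $L^1(B)$ produces a separating element of $L^\infty(B)$, i.e.\ automatically a genuine function, whose $X'$-norm one can then control, yielding $\|h\|_{X''}\ge\lambda$. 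This is exactly how one circumvents the obstacle you correctly identified ($X^\ast\neq X'$ in general) but did not overcome: the separating functional is taken in a space whose dual consists of functions, rather than representing an abstract functional on $X$. If you rework your argument along these lines, the rest of your reduction (truncation, Fatou, and the easy inequality via Lemma \ref{LeHolder}) can be kept as is.
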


The following H\"older inequality is a direct corollary of both Definition \ref{Debqfs}(i)
and \eqref{asso} (see \cite[Theorem 2.4]{BS}).

\begin{lemma}\label{LeHolder}
Let $X$ be a ball quasi-Banach function space
satisfying the triangle inequality as in \eqref{eq22x},
and $X'$ its associate space.
If $f\in X$ and $g\in X'$, then $fg$ is integrable and
\begin{equation*}\label{12}
\int_\rn|f(x)g(x)|\,dx\le \|f\|_X\|g\|_{X'}.
\end{equation*}
\end{lemma}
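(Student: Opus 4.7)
The statement is, on its face, a direct unwinding of the definition of the associate norm in \eqref{asso}, so my plan is correspondingly short; the work is really just checking that the definition can be applied to $f$ itself after a suitable normalization.

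First I would reduce to the case where $f$ and $g$ are nonnegative. By Definition \ref{Debqfs}(ii) applied in both directions (to $|f|$ and $f$, and to $|f|$ and $-f$ after noting homogeneity of the quasi-norm gives $\|-f\|_X=\|f\|_X$), we have $\||f|\|_X=\|f\|_X$; a parallel remark holds for $g$ in $X'$, since the definition in \eqref{asso} depends only on $|g|$ through $\|fg\|_{L^1(\rn)}$. Thus, replacing $f$ and $g$ by $|f|$ and $|g|$, I may assume they are nonnegative, and it suffices to estimate $\int_\rn f(x)g(x)\,dx$.

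Next, dispose of the degenerate case: if $\|f\|_X=0$, then Definition \ref{Debqfs}(i) forces $f=0$ almost everywhere, so $fg=0$ almost everywhere and the claimed inequality is trivial (with both sides zero, which also yields integrability). Otherwise, set $h:=f/\|f\|_X$. Using the homogeneity of the quasi-norm on the quasi-Banach space $X$, one has $\|h\|_X=1$; moreover $h\in X$. The associate norm \eqref{asso} is a supremum over exactly such unit-norm elements, so
\[
\int_\rn h(x)g(x)\,dx=\|hg\|_{L^1(\rn)}\le \|g\|_{X'}.
\]
Multiplying through by $\|f\|_X$ yields $\int_\rn f(x)g(x)\,dx\le \|f\|_X\|g\|_{X'}$, which is the desired estimate. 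Finiteness of this integral, together with the measurability of $fg$, gives the integrability of $fg$.

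There is no real obstacle here: the only point that is not purely definitional is that the associate norm in \eqref{asso} only tests against unit vectors in $X$, so one must normalize $f$ before applying the definition, and the quasi-Banach homogeneity $\|cf\|_X=|c|\|f\|_X$ is used precisely to make this normalization legal. The hypothesis that $X$ satisfies the triangle inequality \eqref{eq22x} is not explicitly needed in the argument itself, but it ensures via Remark \ref{bbf}-style considerations that $X'$ is a bona fide ball Banach function space so that the statement $g\in X'$ and the quantity $\|g\|_{X'}$ are meaningful.
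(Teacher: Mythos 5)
Your proof is correct and is essentially the argument the paper has in mind: the paper treats the lemma as a direct corollary of the definition \eqref{asso} of the associate norm together with Definition \ref{Debqfs}(i) (citing \cite[Theorem 2.4]{BS}), which is exactly your normalization of $f$ plus the separate treatment of the case $\|f\|_X=0$. No gaps.
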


We still need to recall the notion of the convexity of ball quasi-Banach spaces,
which is a part of \cite[Definition 2.6]{SHYY}.
\begin{definition}\label{Debf}
Let $X$ be a ball quasi-Banach function space and $p\in(0,\infty)$.
The $p$-\emph{convexification} $X^p$ of $X$ is defined by setting $X^p:=\{f\in\mathscr M(\rn):\ |f|^p\in X\}$
equipped with the quasi-norm $\|f\|_{X^p}:=\||f|^p\|_X^{1/p}$.
\end{definition}

In what follows, we denote by the \emph{symbol $L_{\loc}^1(\rn)$}
the set of all locally integrable functions on $\rn$.
The \emph{Hardy--Littlewood maximal operator} $\cm$
is defined by setting, for any $f\in L_{\loc}^1(\rn)$ and $x\in\rn$,
\begin{equation}\label{mm}
\cm(f)(x):=\sup_{B\ni x}\frac1{|B|}\int_B|f(y)|\,dy,
\end{equation}
where the supremum is taken over all balls $B\in\BB$ containing $x$.

For any $\theta\in(0,\infty)$, the \emph{powered Hardy--Littlewood
maximal operator} $\cm^{(\theta)}$ is defined by setting,
for any $f\in L_{\loc}^1(\rn)$ and $x\in\rn$,
\begin{equation}\label{mmx}
\cm^{(\theta)}(f)(x):=\lf\{\cm\lf(|f|^\theta\r)(x)\r\}^{1/\theta}.
\end{equation}

The following lemma is a part of \cite[Remark 2.19(i)]{ZWYY}.

\begin{lemma}\label{as1}
Let $\theta\in(0,\infty)$ and
$X$ be a ball quasi-Banach function space. Assume that there exists a positive
constant $C$ such that, for any $f\in\mathscr M(\rn)$,
\begin{equation*}\label{as1-1}
\lf\|\cm^{(\theta)}(f)\r\|_X\le C\lf\|f\r\|_X.
\end{equation*}
Then there exists a positive
constant $\widetilde{C}$ such that, for any ball $B\in\BB$
and $\beta\in[1,\infty)$,
\begin{align}\label{EqHLMS}
\lf\|\mathbf{1}_{\beta B}\r\|_{X}
\le \widetilde{C}\beta^{n/\theta}\lf\|\mathbf{1}_{B}\r\|_{X},
\end{align}
where the positive constant $C$ is independent of $B\in\BB$ and $\beta$.
\end{lemma}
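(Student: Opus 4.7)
The plan is to exploit the assumed boundedness of $\cm^{(\theta)}$ on $X$ by evaluating it at the indicator function $\mathbf{1}_B$ and reading off a pointwise lower bound for $\cm^{(\theta)}(\mathbf{1}_B)$ on the dilate $\beta B$. More precisely, I would fix $B:=B(x_B,r_B)\in\BB$ and $\beta\in[1,\infty)$, set $f:=\mathbf{1}_B$, and note that $|f|^\theta=\mathbf{1}_B$. For any $x\in\beta B$, the ball $\widetilde B:=B(x,2\beta r_B)$ contains $B$, since for any $y\in B$ one has $|y-x|\le|y-x_B|+|x_B-x|<r_B+\beta r_B\le 2\beta r_B$. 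Consequently, from \eqref{mm} and \eqref{mmx},
$$
\cm^{(\theta)}(f)(x)\ge\lf[\frac{1}{|\widetilde B|}\int_{\widetilde B}\mathbf{1}_B(y)\,dy\r]^{1/\theta}=\lf[\frac{|B|}{|\widetilde B|}\r]^{1/\theta}=(2\beta)^{-n/\theta},
$$
which is equivalent to the pointwise majorization
$$
\mathbf{1}_{\beta B}\le (2\beta)^{n/\theta}\,\cm^{(\theta)}(\mathbf{1}_B)\quad\text{on }\rn.
$$

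Then, invoking the monotonicity of $\|\cdot\|_X$ from Definition \ref{Debqfs}(ii) together with the hypothesis on $\cm^{(\theta)}$, I would conclude that
$$
\lf\|\mathbf{1}_{\beta B}\r\|_X\le (2\beta)^{n/\theta}\lf\|\cm^{(\theta)}(\mathbf{1}_B)\r\|_X\le C(2\beta)^{n/\theta}\lf\|\mathbf{1}_B\r\|_X,
$$
which yields \eqref{EqHLMS} with $\widetilde C:=C\cdot 2^{n/\theta}$, a constant independent of both $B$ and $\beta$. The argument is essentially routine; the only step meriting attention is the selection of the auxiliary ball $\widetilde B$ producing the pointwise lower bound, which must contain $B$ while having volume comparable to $\beta^n|B|$. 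Beyond this elementary geometric observation, I do not anticipate any substantial obstacle, and in particular no use of the triangle inequality or any finer structural property of $X$ is needed; the proof goes through even for the ball \emph{quasi}-Banach case.
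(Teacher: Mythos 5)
Your proof is correct: the pointwise bound $\mathbf{1}_{\beta B}\le(2\beta)^{n/\theta}\cm^{(\theta)}(\mathbf{1}_B)$, obtained from the auxiliary ball $B(x,2\beta r_B)\supset B$ for $x\in\beta B$, combined with Definition \ref{Debqfs}(ii), the homogeneity of the quasi-norm, and the assumed boundedness of $\cm^{(\theta)}$ on $X$, gives \eqref{EqHLMS} with $\widetilde C=C\,2^{n/\theta}$, and indeed no triangle inequality is needed. The paper itself gives no proof but cites \cite[Remark 2.19(i)]{ZWYY}, and your argument is precisely the standard one underlying that result, so this is essentially the same approach.
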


\begin{remark}\label{eta}
From \cite[Lemma 2.15(ii)]{SHYY}, we deduce that, if $\cm$ is bounded on $X$,
then there exists an $\eta\in(1,\infty)$ such that $\cm^{(\eta)}$ is bounded on $X$,
where $\cm^{(\eta)}$ is as in \eqref{mmx} with $\tz$ replaced by $\eta$.
\end{remark}

\subsection{Sufficiency and necessity of boundedness of commutators\label{s2.2}}

In this subsection, we obtain the sufficiency and the necessity of the boundedness
of commutators, respectively, in Theorems \ref{thm-bdd-1} and \ref{thm-bdd-2} below.

First, we recall the notions of Muckenhoupt weights $A_p(\rn)$ (see, for instance, \cite{G1}).
\begin{definition}\label{weight}
An \emph{$A_p(\rn)$-weight} $\omega$, with $p\in[1,\infty)$, is a
locally integrable and nonnegative function on $\rn$ satisfying that,
when $p\in(1,\infty)$,
\begin{equation*}
[\oz]_{A_p(\rn)}:=\sup_{B\in\BB}\lf[\frac1{|B|}\int_B\omega(x)\,dx\r]\lf\{\frac1{|B|}
\int_B\lf[\omega(x)\r]^{\frac1{1-p}}\,dx\r\}^{p-1}<\infty,
\end{equation*}
and, when $p=1$,
$$
[\omega]_{A_1(\rn)}:=\sup_{B\in\BB}\frac1{|B|}\int_B\omega(x)\,dx\lf[\lf\|\omega^{-1}\r\|_{L^\infty(B)}\r]<\infty,
$$
where $\BB$ is as in \eqref{Eqball}. Define $A_\infty(\rn):=\bigcup_{p\in[1,\infty)}A_p(\rn)$.
\end{definition}

\begin{definition}\label{wk}
Let $p\in(0,\infty)$ and $\omega\in A_\infty(\rn)$.
The \emph{weighted Lebesgue space $L_\omega^p(\rn)$} is defined
to be the set of all measurable functions $f$ on $\rn$ such that
$$
\|f\|_{L^p_\omega(\rn)}:=\lf[\int_\rn|f(x)|^p\omega(x)\,dx\r]^\frac1p<\infty.
$$
\end{definition}

The following technical lemma is just \cite[Lemma 4.7]{cwyz}, which
plays a vital role in the proof of Proposition \ref{thm-bdd-0} below.

\begin{lemma}\label{embed1}
Let $X$ be a ball quasi-Banach function space satisfying Assumption \ref{assum}(ii).
Then there exists an $\epsilon\in(0,1)$ such that
$X$
continuously embeds into $L_\omega^s(\rn)$ with $\omega:=[\cm(\mathbf1_{B(\vec0_n,1)})]^\epsilon\in A_1(\rn)$,
namely, there exists a positive constant $C$ such that, for any $f\in X$,
$$\|f\|_{L_\omega^s(\rn)}\le C\|f\|_X.$$
\end{lemma}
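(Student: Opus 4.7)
The plan is to prove the embedding via a duality argument using the associate space $(X^{1/s})'$. Since Definition \ref{Debf} gives $\||f|^s\|_{X^{1/s}}=\|f\|_X^s$, the generalized H\"older inequality (Lemma \ref{LeHolder}) applied to the ball Banach function space $X^{1/s}$ (which is a ball Banach function space by hypothesis) yields
\begin{equation*}
\int_\rn |f(x)|^s\omega(x)\,dx\le \||f|^s\|_{X^{1/s}}\,\|\omega\|_{(X^{1/s})'}=\|f\|_X^s\,\|\omega\|_{(X^{1/s})'},
\end{equation*}
so once $\omega\in(X^{1/s})'$ with a uniform bound, taking the $s$-th root gives the desired embedding. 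Thus everything reduces to choosing $\epsilon$ so that $\omega:=[\cm(\mathbf{1}_{B(\vec{0}_n,1)})]^\epsilon$ belongs both to $A_1(\rn)$ and to $(X^{1/s})'$.

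To handle the $A_1(\rn)$ part, I would invoke the Coifman--Rochberg theorem, which asserts that $(\cm\mu)^\epsilon\in A_1(\rn)$ for any positive measure $\mu$ with $\cm\mu\not\equiv\infty$ and any $\epsilon\in[0,1)$. Applied to $\mu:=\mathbf{1}_{B(\vec{0}_n,1)}\,dx$, this gives $\omega\in A_1(\rn)$ for every $\epsilon\in(0,1)$.

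The heart of the proof is then estimating $\|\omega\|_{(X^{1/s})'}$. Since $\cm$ is bounded on $(X^{1/s})'$ by Assumption \ref{assum}(ii), Remark \ref{eta} applied to $(X^{1/s})'$ supplies some $\eta\in(1,\infty)$ such that $\cm^{(\eta)}$ is bounded on $(X^{1/s})'$, and hence Lemma \ref{as1} (with $X$ replaced by $(X^{1/s})'$ and $\theta$ by $\eta$) yields the doubling-type estimate
\begin{equation*}
\|\mathbf{1}_{\beta B}\|_{(X^{1/s})'}\lesssim\beta^{n/\eta}\,\|\mathbf{1}_{B}\|_{(X^{1/s})'}\quad\text{for any $\beta\in[1,\infty)$ and $B\in\BB$.}
\end{equation*}
I would then fix any $\epsilon\in(1/\eta,\,1)$ and use the well-known pointwise bound $\cm(\mathbf{1}_{B(\vec{0}_n,1)})(x)\lesssim (1+|x|)^{-n}$ to split
\begin{equation*}
\omega\lesssim \mathbf{1}_{B(\vec{0}_n,1)}+\sum_{k=0}^\infty 2^{-kn\epsilon}\,\mathbf{1}_{B(\vec{0}_n,2^{k+1})\setminus B(\vec{0}_n,2^k)}.
\end{equation*}
Applying the triangle inequality in the ball Banach function space $(X^{1/s})'$ together with the preceding doubling estimate gives
\begin{equation*}
\|\omega\|_{(X^{1/s})'}\lesssim \|\mathbf{1}_{B(\vec{0}_n,1)}\|_{(X^{1/s})'}\sum_{k=0}^\infty 2^{-kn(\epsilon-1/\eta)},
\end{equation*}
and the geometric series converges precisely because $\epsilon>1/\eta$.

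The main obstacle, and the reason for imposing Assumption \ref{assum}(ii), is this balance between the decay $2^{-kn\epsilon}$ coming from the pointwise bound on $\cm(\mathbf{1}_{B(\vec{0}_n,1)})$ and the growth $2^{kn/\eta}$ of the $(X^{1/s})'$-norms of characteristic functions of dilated balls. The bound $\eta>1$ (that is, the nontrivial boundedness of $\cm$ on $(X^{1/s})'$ which self-improves to some $\eta>1$) is exactly what creates a nonempty window $(1/\eta,1)$ from which to pick $\epsilon$, ensuring $\omega$ lies in both $A_1(\rn)$ and $(X^{1/s})'$. The remaining steps are then purely formal: the embedding constant $C$ depends on $n$, $s$, $\eta$, $\epsilon$, and $\|\mathbf{1}_{B(\vec{0}_n,1)}\|_{(X^{1/s})'}$.
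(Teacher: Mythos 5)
Your argument is correct, and it is essentially the standard proof of this embedding: the paper itself gives no proof (it quotes the lemma from \cite[Lemma 4.7]{cwyz}), and the argument there runs along the same lines as yours, namely H\"older's inequality for $X^{1/s}$ and $(X^{1/s})'$ together with the Coifman--Rochberg theorem for the $A_1(\rn)$ membership and the boundedness of $\cm$ on $(X^{1/s})'$ to control $\|\omega\|_{(X^{1/s})'}$. The only place where you take a slightly longer path is the estimate of $\|\omega\|_{(X^{1/s})'}$: instead of choosing $\epsilon\in(1/\eta,1)$ and summing over dyadic annuli via Lemma \ref{as1}, one can simply take $\epsilon:=1/\eta$ with $\eta\in(1,\infty)$ as in Remark \ref{eta} applied to $(X^{1/s})'$, and observe that
\begin{equation*}
\omega=\lf[\cm\lf(\mathbf{1}_{B(\vec0_n,1)}\r)\r]^{1/\eta}
=\cm^{(\eta)}\lf(\mathbf{1}_{B(\vec0_n,1)}\r),
\end{equation*}
so that the boundedness of $\cm^{(\eta)}$ on $(X^{1/s})'$ gives $\|\omega\|_{(X^{1/s})'}\ls\|\mathbf{1}_{B(\vec0_n,1)}\|_{(X^{1/s})'}<\infty$ in a single step, with no decomposition and no convergence condition on $\epsilon$. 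Your version is nevertheless complete: the pointwise bound $\cm(\mathbf{1}_{B(\vec0_n,1)})(x)\ls(1+|x|)^{-n}$, the countable triangle inequality in the ball Banach function space $(X^{1/s})'$ (justified by Definition \ref{Debqfs}(iii)), and the window $(1/\eta,1)\neq\emptyset$ all hold, so both routes yield the same conclusion with a constant depending on $\|\mathbf{1}_{B(\vec0_n,1)}\|_{(X^{1/s})'}$.
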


The following extrapolation theorem is just \cite[Lemma 7.34]{ZWYY},
which is a slight variant of a special case of \cite[Theorem 4.6]{CMP} via
replacing Banach function spaces by ball Banach function spaces.

\begin{lemma}\label{thet}
Let $X$ be a ball quasi-Banach function space and $p_0\in(0,\infty)$.
Let $\mathcal{F}$ be the set of all
pairs of nonnegative measurable
functions $(F,G)$ such that, for any given $\omega\in A_1(\rn)$,
$$
\int_{\rn}[F(x)]^{p_0}\omega(x)\,dx\leq C_{(p_0,[\omega]_{A_1(\rn)})}
\int_{\rn}[G(x)]^{p_0}\omega(x)\,dx,
$$
where $C_{(p_0,[\omega]_{A_1(\rn)})}$ is a positive constant independent
of $(F,G)$, but depends on $p_0$ and $[\omega]_{A_1(\rn)}$.
Assume that there exists a $q_0\in[p_0,\infty)$ such that $X^{1/q_0}$ is
a ball Banach function space and $\cm$ is bounded on $(X^{1/q_0})'$,
where $\cm$ is as in \eqref{mm}. Then there
exists a positive constant $C_0$ such that, for any $(F,G)\in\mathcal{F}$,
$$
\|F\|_{X}\leq C_0\|G\|_{X}.
$$
\end{lemma}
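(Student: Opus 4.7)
The plan is to carry out a Rubio de Francia extrapolation adapted to the ball Banach function space setting. The argument splits naturally into a duality step on $X^{1/q_0}$, a construction of an $A_1(\rn)$-majorant via iteration of $\cm$, and a classical Rubio de Francia exponent-raising step, all chained together by the H\"older inequality of Lemma \ref{LeHolder}.

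First, by Definition \ref{Debf}, one has $\|F\|_X^{q_0}=\|F^{q_0}\|_{X^{1/q_0}}$. Since $X^{1/q_0}$ is assumed to be a ball Banach function space, Lemma \ref{Lesdual} yields $X^{1/q_0}=(X^{1/q_0})''$ isometrically, and the very definition of the associate norm then produces a nonnegative $h\in(X^{1/q_0})'$ with $\|h\|_{(X^{1/q_0})'}\le 1$ such that
$$
\|F^{q_0}\|_{X^{1/q_0}}\le 2\int_{\rn}F(x)^{q_0}h(x)\,dx.
$$
A standard truncation exploiting Definition \ref{Debqfs}(iii) allows us to reduce to the case $\|F\|_X<\infty$, while the case $\|G\|_X=\infty$ is vacuous.

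Next, let $A:=\|\cm\|_{(X^{1/q_0})'\to(X^{1/q_0})'}$ and define the Rubio de Francia iteration
$$
\mathcal{R}(h):=h+\sum_{k=1}^{\infty}\frac{\cm(\cm(\cdots\cm(h)\cdots))}{(2A)^k},
$$
where the numerator of the $k$-th summand is the $k$-fold composition of $\cm$ applied to $h$. Three routine consequences follow: almost everywhere $h\le\mathcal{R}(h)$; the sublinearity of $\cm$ gives $\cm(\mathcal{R}(h))\le 2A\,\mathcal{R}(h)$, so that $\mathcal{R}(h)\in A_1(\rn)$ with $[\mathcal{R}(h)]_{A_1(\rn)}\le 2A$; and a geometric series, together with the boundedness of $\cm$ on $(X^{1/q_0})'$, yields $\|\mathcal{R}(h)\|_{(X^{1/q_0})'}\le 2$.

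Finally, invoke the classical Rubio de Francia extrapolation on weighted Lebesgue spaces to upgrade the hypothesis at $p_0$ to one at $q_0$: for every $\omega\in A_1(\rn)\subset A_{q_0/p_0}(\rn)$,
$$
\int_{\rn}F(x)^{q_0}\omega(x)\,dx\le C\int_{\rn}G(x)^{q_0}\omega(x)\,dx,
$$
with $C$ controlled by $[\omega]_{A_{q_0/p_0}(\rn)}\le[\omega]_{A_1(\rn)}$. Taking $\omega:=\mathcal{R}(h)$ and invoking Lemma \ref{LeHolder} on $X^{1/q_0}$ and its associate then yield
$$
\|F\|_X^{q_0}\le 2\int_{\rn}F^{q_0}h\,dx\le 2\int_{\rn}F^{q_0}\mathcal{R}(h)\,dx\le 2C\int_{\rn}G^{q_0}\mathcal{R}(h)\,dx\le 2C\|G\|_X^{q_0}\|\mathcal{R}(h)\|_{(X^{1/q_0})'}\le 4C\|G\|_X^{q_0},
$$
and taking $q_0$-th roots gives the conclusion with $C_0:=(4C)^{1/q_0}$. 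The main obstacle is ensuring that the exponent-raising step from $p_0$ to $q_0$ preserves a quantitative $[\omega]_{A_1(\rn)}$-dependence in the constant, because only then does $\mathcal{R}(h)$ — which is $A_1(\rn)$ but not in any a priori sharper subclass — yield a universal bound when substituted as the weight; secondarily, one must verify that the iteration $\mathcal{R}$ stays within $(X^{1/q_0})'$ with the stated norm, a point for which the hypothesis that $\cm$ is bounded on $(X^{1/q_0})'$ is indispensable.
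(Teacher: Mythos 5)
Your proposal is correct, but note that the paper does not actually prove Lemma \ref{thet}: it is quoted verbatim from \cite[Lemma 7.34]{ZWYY}, itself a variant of \cite[Theorem 4.6]{CMP}, so there is no in-paper argument to compare against. What you have written is essentially the standard proof of that cited result: dualize $\|F\|_X^{q_0}=\|F^{q_0}\|_{X^{1/q_0}}$ via Lemma \ref{Lesdual}, run the Rubio de Francia iteration $\mathcal{R}$ on $(X^{1/q_0})'$ to produce an $A_1(\rn)$-majorant with $[\mathcal{R}(h)]_{A_1(\rn)}\le 2A$ and $\|\mathcal{R}(h)\|_{(X^{1/q_0})'}\le 2$, raise the exponent from $p_0$ to $q_0$ by classical weighted extrapolation (with constant depending only on $[\omega]_{A_{q_0/p_0}(\rn)}\le[\omega]_{A_1(\rn)}\le 2A$, the uniformity you rightly single out), and close with Lemma \ref{LeHolder}; the only points to make fully rigorous are the ones you already flag, namely the truncation-plus-Fatou (Definition \ref{Debqfs}(iii)) reduction when $\|F\|_X$ is not known to be finite, or, alternatively, bounding $\int_{\rn}F^{q_0}h\,dx$ uniformly over all admissible $h$ and taking the supremum, which avoids the factor $2$ altogether.
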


To study the boundedness of commutators in this article,
we modify Lemma \ref{thet} as follows.

\begin{proposition}\label{thm-bdd-0}
Let $X$ be a ball quasi-Banach function space satisfying Assumption \ref{assum}(ii).
Let $\mathcal{T}$ be an operator satisfying,
for any given $\oz\in A_1(\rn)$ and any $f\in L^s_\oz(\rn)$ ,
$$\|\mathcal{T} (f)\|_{L^s_\oz(\rn)}\le
C_{(s,[\oz]_{A_1(\rn)})}\|f\|_{L^s_\oz(\rn)},$$
where $C_{(s,[\oz]_{A_1(\rn)})}$ is a positive constant independent of $f$,
but depends on $s$ and $[\oz]_{A_1(\rn)}$.
Then there
exists a positive constant $C$ such that, for any $f\in X$,
\begin{equation}\label{Eqcc0}
\lf\|\mathcal{T} (f)\r\|_X\le C\lf\|f\r\|_X.
\end{equation}
\end{proposition}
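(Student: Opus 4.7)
The plan is to reduce the statement to the extrapolation theorem recorded as Lemma \ref{thet}, applied to the singleton family $\mathcal{F}=\{(|\mathcal{T}(f)|,|f|)\}$, with $p_0=q_0=s$. Assumption \ref{assum}(ii) is exactly the hypothesis that $X^{1/s}$ is a ball Banach function space and that $\mathcal{M}$ is bounded on $(X^{1/s})'$, so the structural input that Lemma \ref{thet} needs is present without further work.

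Before invoking extrapolation, I would first check that $\mathcal{T}(f)$ even makes sense for $f\in X$. This is where Lemma \ref{embed1} enters: since $X$ satisfies Assumption \ref{assum}(ii), there exists $\epsilon\in(0,1)$ such that $X$ embeds continuously into $L^s_{\omega_0}(\rn)$ with $\omega_0:=[\mathcal{M}(\mathbf{1}_{B(\vec{0}_n,1)})]^\epsilon\in A_1(\rn)$. Hence any $f\in X$ belongs to $L^s_{\omega_0}(\rn)$, and the standing hypothesis on $\mathcal{T}$ (applied to the specific weight $\omega_0$) yields that $\mathcal{T}(f)$ is well defined almost everywhere on $\rn$ and lies in $L^s_{\omega_0}(\rn)$.

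Once $\mathcal{T}(f)$ is identified as an element of $\mathscr{M}(\rn)$, I would verify that the pair $(F,G):=(|\mathcal{T}(f)|,|f|)$ satisfies the family condition of Lemma \ref{thet} with exponent $p_0=s$. For any given $\omega\in A_1(\rn)$, either the right-hand side $\int_\rn|f(x)|^s\omega(x)\,dx$ is infinite, in which case the required inequality is trivial, or it is finite, in which case $f\in L^s_\omega(\rn)$ and the hypothesis of the proposition yields
\begin{equation*}
\int_\rn|\mathcal{T}(f)(x)|^s\omega(x)\,dx
\le C_{(s,[\omega]_{A_1(\rn)})}^{\,s}\int_\rn|f(x)|^s\omega(x)\,dx,
\end{equation*}
with a constant that depends only on $s$ and $[\omega]_{A_1(\rn)}$. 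Thus $(|\mathcal{T}(f)|,|f|)$ belongs to the admissible family $\mathcal{F}$ of Lemma \ref{thet}.

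Finally, applying Lemma \ref{thet} with $p_0=q_0=s$ to this pair produces a positive constant $C_0$, independent of $f$, such that $\||\mathcal{T}(f)|\|_X\le C_0\||f|\|_X$, which is \eqref{Eqcc0}. I do not anticipate a genuine obstacle here; the only subtle point is the a~priori meaningfulness of $\mathcal{T}(f)$ on $X$, and that is precisely what Lemma \ref{embed1} is designed to supply under Assumption \ref{assum}(ii). In short, Proposition \ref{thm-bdd-0} is a clean packaging of the embedding Lemma \ref{embed1} together with the extrapolation Lemma \ref{thet}, tailored so that later sections can invoke it directly for the Calder\'on--Zygmund commutator $[b,T_\Omega]$ without repeatedly checking well-definedness.
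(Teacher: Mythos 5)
Your proposal is correct and follows essentially the same route as the paper: embed $X$ into a weighted Lebesgue space via Lemma \ref{embed1} to ensure $\mathcal{T}(f)$ is meaningful, verify the weighted $L^s_\omega$ inequality for the pairs $(|\mathcal{T}(f)|,|f|)$, and apply the extrapolation Lemma \ref{thet} with $p_0=q_0=s$. Your explicit treatment of the case where $\int_\rn|f|^s\omega\,dx=\infty$ is a slightly more careful bookkeeping of the same argument.
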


\begin{proof}
Let $X$ be a ball quasi-Banach function space and $s\in(1,\infty)$.
Assume that $X^{1/s}$ is a ball Banach function space
and $\cm$ is bounded on $(X^{1/s})'$.
To show \eqref{Eqcc0}, let
$$
\cf:=\left\{(|\mathcal{T}(f)|, |f|):\
f\in\bigcup_{\oz\in A_1(\rn)} L^s_\oz(\rn)\right\}.
$$
Then, by the assumption on $\mathcal{T}$,
we obtain, for any given $\omega\in A_1(\rn)$ and
any $f\in\bigcup_{\oz\in A_1(\rn)} L^s_\oz(\rn)$,
$$
\int_{\rn}\lf|\mathcal{T}(f)(x)\r|^s\omega(x)\,dx
\le \lf[C_{(s,[\oz]_{A_1(\rn)})}\r]^s\int_{\rn}\lf|f(x)\r|^s\omega(x)\,dx,
$$
which, together with the assumptions that $X^{1/s}$ is a ball Banach function space and
$\cm$ is bounded on $(X^{1/s})'$, and Lemma \ref{thet}, further implies that,
for any $f\in\bigcup_{\oz\in A_1(\rn)} L^s_\oz(\rn)$,
\begin{equation}\label{eq3.5.0}
\lf\|\mathcal{T}(f)\r\|_{X}\lesssim \lf\|f\r\|_{X}.
\end{equation}
By Lemma \ref{embed1}, we know that $X\subset \bigcup_{\oz\in A_1(\rn)} L^s_\oz(\rn)$,
which, combined with \eqref{eq3.5.0}, implies the desired boundedness and hence
completes the proof of Proposition \ref{thm-bdd-0}.
\end{proof}

In order to introduce singular integral operators with homogeneous kernel,
we now state the following notion of the $L^\infty$-Dini condition.
\begin{definition}\label{a2.15}
A function $\Omega\in L^\infty(\mathbb{S}^{n-1})$
is said to satisfy the \emph{$L^\infty$-Dini condition}
if
\begin{align}\label{Dini}
\int_{0}^1
\frac{\omega_\infty(\tau)}{\tau}\,d\tau<\infty,
\end{align}
where, for any $\tau\in(0,1)$,
$$
\omega_\infty(\tau):=
\sup_{\{x,\ y\in\mathbb{S}^{n-1}:\ |x-y|<\tau\}}
|\Omega(x)-\Omega(y)|.
$$
\end{definition}
Recall that the \emph{symbol} $a\to0^+$ means that $a\in(0,\fz)$ and $a\to0$.
Through this article,
assuming that $\Omega$ satisfies \eqref{deg0}, \eqref{mean0},
and the $L^\infty$-Dini condition,
a linear operator $T_\Omega$ is called
a \emph{singular integral operator with homogeneous kernel} $\Omega$
(see, for instance, \cite[p.\,53, Corollary 2.1.1]{ldy})
if, for any $f\in L^p(\rn)$ with $p\in[1,\infty)$, and for any
$x\in\rn$, \eqref{T-def} holds true.
Let $X$ be a ball quasi-Banach function space.
Assume that there exists an $s\in(0,\infty)$ such that $X^{1/s}$ is a ball Banach function space
and $\cm$ is bounded on $(X^{1/s})'$, where $\cm$ is as in \eqref{mm}.
By Lemma \ref{embed1} and \cite[Corollary 7.13]{d}, we know that,
for any $f\in X$, $T_\Omega(f)(x)$ exists for almost every $x\in\rn$.

For any given $b\in L_{\loc}^1(\rn)$,
the commutator $[b, T_\Omega]$ is defined by setting,
for any bounded function $f$ with compact support, and for any $x\in\rn$,
\begin{equation}\label{z}
[b,T_\Omega](f)(x):=b(x)T_\Omega(f)(x)-T_\Omega(bf)(x).
\end{equation}

To prove Theorem \ref{thm-bdd-1}, we need the following weighted
$L^p(\rn)$ boundedness of the commutator $[b,T_\Omega]$, which is a part of
\cite[Theorem 2.4.4]{ldy}.

\begin{lemma}\label{coa}
Let $\omega\in A_1(\rn)$, $p\in(1,\infty)$, and $q\in(1,\infty]$ satisfy $q'\le p$ with $1/q+1/q'=1$.
Assume that $b\in\BMO(\rn)$,
$\Omega\in L^q(\mathbb{S}^{n-1})$ satisfies \eqref{deg0} and \eqref{mean0}, and
$T_\Omega$ is
a singular integral operator with homogeneous kernel $\Omega$.
Then there exists
a positive constant $C_{(p,\Omega,[\oz]_{A_p(\rn)})}$,
depending on $p$, $\Omega$, and $[\oz]_{A_p(\rn)}$, such that,
for any $f\in L^p_\oz(\rn)$,
$$
\int_\rn\lf|[b,T_\Omega](f)(x)\r|^p\omega(x)\,dx\leq
C_{(p,\Omega,[\oz]_{A_p(\rn)})}\|b\|_{\BMO(\rn)}^p\int_\rn\lf|f(x)\r|^p\omega(x)\,dx.
$$
\end{lemma}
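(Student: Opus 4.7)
The plan is to employ the classical Coifman--Rochberg--Weiss conjugation trick, which reduces the weighted boundedness of $[b,T_\Omega]$ to the weighted boundedness of $T_\Omega$ itself. Given any bounded, compactly supported $f$, the function $z\mapsto e^{-zb(x)}T_\Omega(e^{zb}f)(x)$ is entire in $z\in\mathbb{C}$, and differentiating it at $z=0$ returns $[b,T_\Omega](f)(x)$ up to a sign; hence, by Cauchy's integral formula, for any $r\in(0,\infty)$,
$$
\lf|[b,T_\Omega](f)(x)\r|\le\frac{1}{2\pi r^2}\oint_{|z|=r}\lf|e^{-zb(x)}T_\Omega\lf(e^{zb}f\r)(x)\r|\,|dz|.
$$

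First I would invoke the weighted boundedness of the rough singular integral itself: by the Duoandikoetxea--Rubio de Francia theory, since $\Omega\in L^q(\mathbb{S}^{n-1})$ with $q'\le p$, $T_\Omega$ is bounded on $L^p_v(\rn)$ for every $v\in A_{p/q'}(\rn)$, with operator norm controlled quantitatively by $\|\Omega\|_{L^q(\mathbb{S}^{n-1})}$ and $[v]_{A_{p/q'}(\rn)}$. Next I would use the John--Nirenberg inequality to show that, whenever $b\in\BMO(\rn)$ and $|\mathrm{Re}(z)|\,\|b\|_{\BMO(\rn)}$ is below an absolute threshold $\varepsilon_0$, the perturbed weight $\omega_z:=e^{-p\,\mathrm{Re}(z)\,b}\omega$ lies in $A_{p/q'}(\rn)$ with $[\omega_z]_{A_{p/q'}(\rn)}$ controlled solely in terms of $[\omega]_{A_{p/q'}(\rn)}$ (which is dominated by a power of $[\omega]_{A_1(\rn)}$) and $\varepsilon_0$.

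Choosing $r:=\varepsilon_0/\|b\|_{\BMO(\rn)}$, I would then take the $L^p_\omega$-norm on both sides of the Cauchy representation via Minkowski's integral inequality. For each $z$ with $|z|=r$, the quantity
$$
\lf\|e^{-zb}T_\Omega\lf(e^{zb}f\r)\r\|_{L^p_\omega(\rn)}=\lf\|T_\Omega\lf(e^{zb}f\r)\r\|_{L^p_{\omega_z}(\rn)}
$$
is bounded by a constant multiple of $\|e^{zb}f\|_{L^p_{\omega_z}(\rn)}=\|f\|_{L^p_\omega(\rn)}$ by the two preceding steps. Integrating over the contour of length $2\pi r$ and using the elementary identity $r^{-1}\sim\|b\|_{\BMO(\rn)}$ yields the stated estimate; the extension from bounded compactly supported $f$ to arbitrary $f\in L^p_\omega(\rn)$ is a routine density argument based on the weighted boundedness of $T_\Omega$ already established.

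The main technical obstacle will be the quantitative stability of the $A_{p/q'}$ class under the multiplicative perturbation $v\mapsto e^{tb}v$ for small $t$, namely the John--Nirenberg-type estimate $\frac{1}{|B|}\int_B e^{tb}\lesssim e^{tb_B}$, and an analogous bound for $e^{-tb}$, valid uniformly in balls $B\subset\rn$ as soon as $|t|\,\|b\|_{\BMO(\rn)}$ is sufficiently small; combined with H\"older's inequality applied separately on $\omega$ and $e^{-p\,\mathrm{Re}(z)b}$, this is exactly what controls $[\omega_z]_{A_{p/q'}(\rn)}$. Once this weight-stability is in hand, the remaining points (contour interchange, differentiation under the integral sign, and the final density reduction) are standard, and the quoted dependence of the constant on $p$, $\Omega$, and $[\omega]_{A_p(\rn)}$ follows by tracking the constants through the argument.
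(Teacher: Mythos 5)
You give a genuine proof where the paper gives none: the paper simply quotes this lemma from \cite[Theorem 2.4.4]{ldy}, whereas you run the classical Coifman--Rochberg--Weiss conjugation argument (as in Alvarez--Bagby--Kurtz--P\'erez). For $q'<p$ your route is essentially correct: Duoandikoetxea's weighted theory gives $\|T_\Omega\|_{L^p_v(\rn)\to L^p_v(\rn)}\lesssim 1$ for $v\in A_{p/q'}(\rn)$, and for an index $s\in(1,\infty)$ the class $A_s(\rn)$ is indeed stable under $v\mapsto e^{tb}v$ when $|t|\,\|b\|_{\BMO(\rn)}$ is small (reverse H\"older for $v$ plus John--Nirenberg for $e^{tb}$, interpolating the two factors), so the Cauchy-formula/Minkowski steps and the final density argument go through, with constants that can be tracked as claimed.

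The gap is at the endpoint $p=q'$, which the hypothesis $q'\le p$ explicitly allows and which the paper actually uses (Theorem \ref{thm-bdd-1} only assumes $q'\le s$). There $A_{p/q'}(\rn)=A_1(\rn)$, and your key stability claim fails: $A_1(\rn)$ is not stable under multiplication by $e^{tb}$ with $b\in\BMO(\rn)$. Concretely, take $\omega\equiv1\in A_1(\rn)$ and $b(x):=\log|x|$; then $e^{tb}\omega=|\cdot|^{t}\notin A_1(\rn)$ for every $t\in(0,\infty)$, since on $B(\vec0_n,\rho)$ the average of $|\cdot|^t$ is comparable to $\rho^t$ while its essential infimum is $0$. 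The John--Nirenberg bound $\frac1{|B|}\int_B e^{t(b-b_B)}\,dx\lesssim1$ controls averages, not essential infima, so it cannot rescue the $A_1$ condition; and you cannot fall back on the other weight conditions in the rough-kernel theory at $p=q'$ (for instance $v^{q'}\in A_{q'}(\rn)$), because a general $A_1$ weight, let alone its perturbation $e^{-p\,\mathrm{Re}(z)b}\omega$, need not satisfy them. Limited-range extrapolation from the range $p>q'$ also stops short of the endpoint. So as written your argument proves the lemma only for $q'<p$; the case $p=q'$ requires a genuinely different mechanism (for example, the sharp-maximal-function estimates underlying the cited proof in \cite{ldy}), or the statement you prove must be weakened to $q'<p$, which would then have to be reconciled with how the lemma is applied in the paper.
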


Then we immediately have the following sufficiency of the boundedness of
commutators on ball Banach function spaces.

\begin{theorem}\label{thm-bdd-1}
Let $X$ be a ball Banach function space satisfying Assumption \ref{assum}(ii)
for some given $s\in(1,\fz)$.
Let $q\in(1,\infty]$ satisfy $q'\le s$ with $1/q+1/q'=1$.
Assume that $b\in\BMO(\rn)$,
$\Omega\in L^q(\mathbb{S}^{n-1})$ satisfies \eqref{deg0} and \eqref{mean0}, and
$T_\Omega$ is
a singular integral operator with homogeneous kernel $\Omega$.
Then there exists a positive constant $C$ such that,
for any $f\in X$,
\begin{equation*}\label{Eqcc}
\lf\|[b,T_\Omega](f)\r\|_X\le C\|b\|_{\BMO(\rn)}\lf\|f\r\|_X.
\end{equation*}
\end{theorem}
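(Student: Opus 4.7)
The plan is to reduce the problem to the weighted $L^s$ boundedness of the commutator via the extrapolation machinery encapsulated in Proposition \ref{thm-bdd-0}. Setting $p:=s$, the hypothesis $q'\le s$ is precisely what is needed to invoke Lemma \ref{coa}: for every $\omega\in A_1(\rn)$ and every $f\in L^s_\omega(\rn)$, this yields
$$
\lf\|[b,T_\Omega](f)\r\|_{L^s_\omega(\rn)}\le C_{(s,\Omega,[\omega]_{A_s(\rn)})}\,\|b\|_{\BMO(\rn)}\,\|f\|_{L^s_\omega(\rn)},
$$
where the standard inclusion $A_1(\rn)\subset A_s(\rn)$, together with the control $[\omega]_{A_s(\rn)}\le[\omega]_{A_1(\rn)}$ (up to a harmless $s$-dependent power), lets us rewrite the weighted constant as $C_{(s,[\omega]_{A_1(\rn)})}\|b\|_{\BMO(\rn)}$, matching exactly the form required in Proposition \ref{thm-bdd-0}.

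First I would confirm that $[b,T_\Omega](f)$ is actually well defined for every $f\in X$. By Assumption \ref{assum}(ii), Lemma \ref{embed1} embeds $X$ continuously into $L^s_\omega(\rn)$ for some $\omega\in A_1(\rn)$, and then Lemma \ref{coa} ensures $[b,T_\Omega](f)\in L^s_\omega(\rn)$; in particular it is finite almost everywhere and the commutator makes sense pointwise a.e.\ for every $f\in X$. Next, excluding the trivial case $\|b\|_{\BMO(\rn)}=0$, I would set
$$
\mathcal{T}(f):=\frac{[b,T_\Omega](f)}{\|b\|_{\BMO(\rn)}},
$$
so that the preceding weighted inequality becomes $\|\mathcal{T}(f)\|_{L^s_\omega(\rn)}\le C_{(s,[\omega]_{A_1(\rn)})}\|f\|_{L^s_\omega(\rn)}$ uniformly in $\omega\in A_1(\rn)$. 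This is exactly the hypothesis of Proposition \ref{thm-bdd-0}.

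Applying Proposition \ref{thm-bdd-0} to $\mathcal{T}$ then gives a positive constant $C$, independent of $f$, such that $\|\mathcal{T}(f)\|_X\le C\|f\|_X$ for every $f\in X$; multiplying back by $\|b\|_{\BMO(\rn)}$ delivers the asserted inequality. There is no genuine obstacle here: the entire argument is the assembly of two earlier results. The only point requiring care is the bookkeeping of constants, namely verifying that the weighted bound produced by Lemma \ref{coa} depends on $\omega$ only through $[\omega]_{A_1(\rn)}$ (with $\Omega$ and $s$ fixed), which is the precise form that Proposition \ref{thm-bdd-0} accepts; once this is observed, the conclusion is immediate.
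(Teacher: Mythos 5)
Your proposal is correct and follows essentially the same route as the paper, whose proof of Theorem \ref{thm-bdd-1} consists precisely of combining Lemma \ref{coa} (applied with $p=s$, using $q'\le s$ and $A_1(\rn)\subset A_s(\rn)$) with the extrapolation result in Proposition \ref{thm-bdd-0}; your additional remarks on well-definedness via Lemma \ref{embed1} and on normalizing by $\|b\|_{\BMO(\rn)}$ just make explicit the bookkeeping the paper leaves implicit.
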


\begin{proof}
Using Lemma \ref{coa} and Proposition \ref{thm-bdd-0}, we immediately complete the proof of Theorem \ref{thm-bdd-1}.
\end{proof}

\begin{remark}
Let $X:=L^p(\rn)$ with $p\in(1,\infty)$.
Assume that
$\Omega\in L^\infty(\mathbb{S}^{n-1})$ satisfies \eqref{lip}, \eqref{deg0}, and \eqref{mean0}.
Then, in this case,
Theorem \ref{thm-bdd-1} coincides with the classical conclusion in \cite[Theorem 1]{crw}.
Compared with the assumptions on $\Omega$ in \cite[Theorem 1]{crw},
the assumptions on $\Omega$ in Theorem \ref{thm-bdd-1} are much weaker.
\end{remark}

Now, we show the necessity of the boundedness of commutators.
To this end, we need three key lemmas,
namely, Lemmas \ref{GR}, \ref{thmb3}, and \ref{rh2},
respectively.

First, recall that, for any given measurable function $f$,
the \emph{non-increasing rearrangement} of $f$
is defined by setting, for any $t\in(0,\infty)$,
$$f^*(t):=\inf\{\alpha\in(0,\fz):\ |\{x\in\rn:\ |f(x)|>\alpha\}|<t\};$$
for any given $f\in L_{\loc}^1(\rn)$ and ball $B\subset\rn$,
the \emph{local mean oscillation} of $f$ on $B$ is defined by setting, for any $\lambda\in(0,1)$,
\begin{equation}\label{z3}
\oz_{\lz}(f;B):=
\inf_{c\in\cc}\lf\{\lf[(f-c)\mathbf{1}_{B}\r]^\ast(\lambda|B|)\r\}.
\end{equation}
The following characterization of ${\rm BMO}(\rn)$ is a part of
\cite[Lemma 2.5]{gwy20}; see also \cite[Lemma 2.1]{lor}.

\begin{lemma}\label{GR}
Let $\lambda\in(0,1/2]$. Then there exist a positive constant $C$ such that,
for any $f\in\BMO(\rn)$,
$$C^{-1}\|f\|_{\BMO(\rn)}
\le \sup_{{\rm ball\ }B\subset \rn} \oz_{\lz}(f;B)
\le C \|f\|_{\BMO(\rn)}.$$
\end{lemma}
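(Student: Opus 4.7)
The plan is to prove the two inequalities separately. The upper bound on $\omega_\lambda(f;B)$ follows directly from Chebyshev's inequality, while the lower bound on $\sup_B\omega_\lambda(f;B)$---equivalently, an upper estimate of $\|f\|_{\BMO(\rn)}$ in terms of $\sup_B\omega_\lambda(f;B)$---requires a John--Nirenberg type distributional argument.

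For the upper bound, for each ball $B$ I would test the infimum in \eqref{z3} with $c:=f_B$. Chebyshev's inequality gives
\begin{equation*}
\lf|\lf\{x\in B:\ \lf|f(x)-f_B\r|>t\r\}\r|
\le t^{-1}\int_B\lf|f-f_B\r|\,dx
\le t^{-1}|B|\,\|f\|_{\BMO(\rn)},
\end{equation*}
so choosing $t$ slightly larger than $\lambda^{-1}\|f\|_{\BMO(\rn)}$ forces this measure to be strictly less than $\lambda|B|$; hence, by the definition of the non-increasing rearrangement, $[(f-f_B)\mathbf{1}_B]^\ast(\lambda|B|)\le\lambda^{-1}\|f\|_{\BMO(\rn)}\le 2\|f\|_{\BMO(\rn)}$, since $\lambda\in(0,1/2]$.

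For the lower bound, set $J:=\sup_{B}\omega_\lambda(f;B)$ and aim at $\|f\|_{\BMO(\rn)}\lesssim J$. For each ball $B$ and $\epsilon>0$, by the definitions of $\omega_\lambda(f;B)$ and of the non-increasing rearrangement, there exists $c_B\in\cc$ such that
\begin{equation*}
\lf|\lf\{x\in B:\ |f(x)-c_B|>J+\epsilon\r\}\r|\le\lambda|B|.
\end{equation*}
Since $\lambda\le 1/2$, an elementary comparison shows that any median $m_f(B)$ of $f$ on $B$ then satisfies $|c_B-m_f(B)|\le J+\epsilon$, so that
\begin{equation*}
\lf|\lf\{x\in B:\ |f(x)-m_f(B)|>2(J+\epsilon)\r\}\r|\le\lambda|B|.
\end{equation*}
Iterating this good-$\lambda$ estimate by a Calder\'on--Zygmund stopping-time decomposition of $B$---selecting dyadic sub-cubes on which $|f-m_f(B)|$ exceeds successive integer multiples of $2J$---produces a John--Nirenberg type exponential decay
\begin{equation*}
\lf|\lf\{x\in B:\ |f(x)-m_f(B)|>t\r\}\r|\le C_1|B|\exp\lf(-c_2 t/J\r)\quad\text{for any }t\in(0,\infty).
\end{equation*}
Integrating in $t$ yields $|B|^{-1}\int_B|f-m_f(B)|\,dx\lesssim J$; combining this with the elementary bound $|B|^{-1}\int_B|f-f_B|\,dx\le 2|B|^{-1}\int_B|f-m_f(B)|\,dx$ then gives $\|f\|_{\BMO(\rn)}\lesssim J$, as desired.

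The main obstacle is producing the exponential distributional decay with constants depending only on $J$ (rather than on $\|f\|_{\BMO(\rn)}$, which is a priori not known to be finite). This requires $\lambda$ to be strictly bounded away from $1$; the hypothesis $\lambda\le 1/2$ enters precisely here, both in comparing $c_B$ with the median and in ensuring that the Calder\'on--Zygmund iteration converges geometrically. As this argument is classical and fully developed in \cite{lor} and reproduced in \cite{gwy20}, in the final write-up I would simply invoke these references rather than reproduce the technical details.
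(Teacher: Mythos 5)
The paper gives no proof of this lemma at all --- it simply cites \cite[Lemma 2.5]{gwy20} and \cite[Lemma 2.1]{lor} --- and your outline (Chebyshev with $c=f_B$ for the upper bound, the median/John--Str\"omberg iteration for the lower bound, with the technical details of the stopping-time argument ultimately deferred to those same two references) is exactly the standard argument behind the cited results, so it is consistent with the paper's treatment. Two cosmetic points only: since $\lambda\in(0,1/2]$ one has $\lambda^{-1}\|f\|_{\BMO(\rn)}\ge 2\|f\|_{\BMO(\rn)}$ (your inequality is reversed, but harmless because $C$ may depend on $\lambda$), and in the boundary case $\lambda=1/2$ the comparison $|c_B-m_f(B)|\le J+\epsilon$ should be deduced from the rearrangement bound $[(f-c_B)\mathbf{1}_B]^\ast(|B|/2)\le[(f-c_B)\mathbf{1}_B]^\ast(\lambda|B|)\le J+\epsilon$ (any median $m$ satisfies $|m-c|\le[(f-c)\mathbf{1}_B]^\ast(|B|/2)$) rather than from the non-strict level-set estimate $|\{x\in B:\ |f(x)-c_B|>J+\epsilon\}|\le\lambda|B|$, which alone does not rule out the equality case.
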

Moreover, the following geometrical lemma is just
\cite[Proposition 3.1]{gwy20} with cubes replaced by balls.

\begin{lemma}\label{thmb3}
Let $\lambda\in(0,1)$ and $b\in L_{\loc}^1(\rn)$.
Let
$\Omega\in L^\infty(\mathbb{S}^{n-1})$ satisfy \eqref{deg0}, \eqref{mean0}, and
there exists an open set $\Lambda\subset \mathbb{S}^{n-1}$ such that
$\Omega$ does not change sign on $\Lambda$.
Then there exist an $\varepsilon_0\in(0,\infty)$
and a $k_0\in(10\sqrt n,\fz)$, depending only on $\Omega$ and $n$,
such that, for any given ball $B(x_0,r_0)\subset\rn$
with $x_0\in\rn$ and $r_0\in(0,\infty)$,
there exist an $x_1\in\rn$ and
 measurable sets $E\subset B(x_0,r_0)$ with $|E|=\frac{\lambda}{2}|B(x_0,r_0)|$,
$F\subset B(x_1,r_0)$ with $|x_1-x_0|=2k_0r_0$ and $|F|=\frac{1}{2}|B(x_1,r_0)|$,
and $G\subset E\times F$ with
$|G|\geq\frac{\lambda}{8} |B(x_0,r_0)|^2$ satisfying the following properties:
\begin{itemize}
\item[{\rm(i)}]for any $x\in E$ and
$y\in F$, $\oz_{\lz}(b;B)\le|b(x)-b(y)|$;
\item[{\rm(ii)}]$\Omega(\frac{x-y}{|x-y|})$ and $b(x)-b(y)$ do not change sign on $E\times F$;
\item[{\rm(iii)}]for any $(x,y)\in G$, $|\Omega(\frac{x-y}{|x-y|})|\geq \varepsilon_0$.
\end{itemize}
\end{lemma}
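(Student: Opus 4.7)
I would adapt the cube-based argument of \cite[Proposition 3.1]{gwy20} to balls via three steps: (a) choose a favorable direction $\eta_0 \in \Lambda$ together with an auxiliary cap $V_\delta \subset \Lambda$ around it and a scale $k_0$; (b) construct $E$ and $F$ from a single median of $b$; (c) extract $G$ by a Lebesgue-density argument. Throughout, write $B := B(x_0, r_0)$ and $B_1 := B(x_1, r_0)$.

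\emph{Direction and scale.} Without loss of generality, assume $\Omega \ge 0$ a.e.\ on $\Lambda$ and $\Omega \not\equiv 0$ there; pick a Lebesgue density point $\eta_0 \in \Lambda$ of $\{\Omega > 0\}$ and set $\varepsilon_0 := \Omega(\eta_0)/2 > 0$. For a small parameter $\delta \in (0, 1)$ (to be fixed at the end, depending only on $n$), choose a spherical cap $V_\delta \subset \Lambda$ around $\eta_0$ with $\sigma(\{\xi \in V_\delta : \Omega(\xi) < \varepsilon_0\}) \le \delta\, \sigma(V_\delta)$; then pick $k_0 > 10\sqrt n$ so that the angular radius of $V_\delta$ exceeds $\arcsin(1/(k_0-1))$, and set $x_1 := x_0 - 2 k_0 r_0 \eta_0$. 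An elementary angular estimate then confines $\frac{x-y}{|x-y|}$ to $V_\delta$ for every $(x, y) \in B \times B_1$, which immediately yields the $\Omega$-part of (ii).

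\emph{The sets $E$ and $F$.} Let $d$ be a median of $b$ on $B_1$; since $d \in \mathbb{R}$ is an admissible constant in the infimum defining $\omega_\lambda(b; B)$, the definition of non-increasing rearrangement gives
\[
|\{x \in B : |b(x) - d| \ge \omega_\lambda(b; B)\}| \ge \lambda |B|.
\]
Pigeonhole on $S^\pm := \{b \gtrless d \pm \omega_\lambda(b; B)\} \cap B$ yields, WLOG, $|S^+| \ge (\lambda/2)|B|$. Take $E \subset S^+$ of measure $(\lambda/2)|B|$ and $F \subset \{b \le d\} \cap B_1$ of measure $|B_1|/2$; then $b(x) - b(y) \ge \omega_\lambda(b; B) \ge 0$ on $E \times F$, establishing (i) and the $b$-part of (ii). The subtle but decisive point is that drawing the reference constant $d$ from $B_1$ rather than from $B$ sidesteps a case analysis that would otherwise couple the signs of $b - c$ on the two balls.

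\emph{The set $G$ and the main obstacle.} Define $G := \{(x, y) \in E \times F : \Omega(\frac{x-y}{|x-y|}) \ge \varepsilon_0\}$, so (iii) is automatic. To obtain $|G| \ge (\lambda/8)|B|^2 = |E \times F|/2$, fix $x \in E$ and switch to polar coordinates $y = x + r\xi'$ around $x$ (so $\frac{x-y}{|x-y|} = -\xi'$ and $dy = r^{n-1}\,dr\,d\sigma(\xi')$), noting that $\{r : x + r\xi' \in B_1\}$ is an interval of length $\le 2 r_0$ with $r \le (2k_0 + 2)r_0$. One obtains, for any Borel $A \subset \mathbb{S}^{n-1}$,
\[
\left|\left\{y \in F : \tfrac{x-y}{|x-y|} \in A\right\}\right| \le 2 r_0\, [(2k_0 + 2)r_0]^{n-1}\, \sigma(A).
\]
Taking $A$ to be the bad directions $\{\xi \in V_\delta : \Omega(\xi) < \varepsilon_0\}$ and integrating in $x \in E$ yields $|(E \times F) \setminus G| \lesssim_n k_0^{n-1}\, \sigma(V_\delta)\, \delta\, |E \times F|$. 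The main obstacle lies precisely here: the cap $V_\delta$ must shrink to force a high Lebesgue density of $\{\Omega \ge \varepsilon_0\}$, while $k_0$ must grow to trap the directions inside $V_\delta$, and these opposing demands might a priori be incompatible. The key observation rescuing the argument is that $V_\delta$ has angular radius $\sim 1/k_0$, so $\sigma(V_\delta) \sim_n k_0^{-(n-1)}$ and the prefactor $k_0^{n-1}\sigma(V_\delta)$ collapses to a dimensional constant; a single choice of $\delta$ depending only on $n$ then closes the estimate uniformly in $B$.
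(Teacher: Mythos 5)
Your proposal follows essentially the route of the source the paper relies on: the paper gives no proof of this lemma, quoting \cite[Proposition 3.1]{gwy20} ``with cubes replaced by balls'', and your argument is that standard one (a median of $b$ on the far ball $B_1$ in the spirit of \cite{lor}, confinement of the directions $\frac{x-y}{|x-y|}$ to a small spherical cap inside $\Lambda$, and a polar-coordinate measure estimate to discard the bad directions). The bookkeeping is correct: $|E|=\frac{\lambda}{2}|B|$, $|F|=\frac12|B_1|$, the rearrangement step giving $|\{x\in B:\ |b(x)-d|\ge\oz_\lz(b;B)\}|\ge\lambda|B|$, the chord-length bound $2r_0[(2k_0+2)r_0]^{n-1}\sigma(A)$, and the reduction of $|G|\ge\frac{\lambda}{8}|B|^2$ to $|(E\times F)\setminus G|\le\frac12|E\times F|$ all check out, and your observation that $\delta$ can be fixed depending only on $n$ because $k_0^{n-1}\sigma(V_\delta)$ is dimensionally bounded is exactly the right balancing (with the harmless caveat that when the good cap is not small one takes $k_0\sim\sqrt n$ and bounds $\sigma(V_\delta)\le\sigma(\mathbb{S}^{n-1})$ rather than using $\sigma(V_\delta)\sim k_0^{-(n-1)}$).

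One step is not justified as written. You take $\eta_0$ to be a Lebesgue density point of $\{\Omega>0\}$, set $\varepsilon_0:=\Omega(\eta_0)/2$, and then assert the existence of a cap $V_\delta$ with $\sigma(\{\xi\in V_\delta:\ \Omega(\xi)<\varepsilon_0\})\le\delta\,\sigma(V_\delta)$. Density of $\{\Omega>0\}$ at $\eta_0$ controls only the proportion of the cap where $\Omega\le0$, not where $0<\Omega<\varepsilon_0$: for instance, if $\Omega(\xi)=|\xi-\eta_0|$ near $\eta_0$ (with the representative set equal to $1$ at the single point $\eta_0$), then $\varepsilon_0=\frac12$ and no cap with that property exists, so the construction of $G$ collapses. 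The repair is the standard reordering: since $\Omega\ge0$ and $\Omega\not\equiv0$ on $\Lambda$, first choose $\varepsilon_0\in(0,\infty)$ with $\sigma(\{\xi\in\Lambda:\ \Omega(\xi)\ge\varepsilon_0\})>0$ (possible because $\{\Omega>0\}\cap\Lambda=\bigcup_{m\in\nn}\{\Omega\ge\frac1m\}\cap\Lambda$ has positive measure), and then let $\eta_0$ be a density point of $\{\Omega\ge\varepsilon_0\}\cap\Lambda$; this gives precisely the cap property you need, and the rest of your argument goes through verbatim. Finally, note that your ``WLOG $\Omega\not\equiv0$ on $\Lambda$'' is not a loss of generality from the hypothesis as literally stated (for $\Omega\equiv0$ no $\varepsilon_0$ can exist), but it is the intended nondegeneracy assumption from \cite{gwy20}, so adding it is the correct reading rather than a defect.
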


In addition, the following lemma shows that, for any ball $B$,
the reverse of Lemma \ref{LeHolder} also holds true
with $f$ and $g$ replaced by $\mathbf1_{B}$,
which is a part of \cite[Lemma 2.2 and Remark 2.3]{is}.

\begin{lemma}\label{rh2}
Let X be a ball Banach function space such that $\cm$ is bounded on $X$.
Then there exists a positive constant $C$ such that,
for any ball $B\subset\rn$,
$$\frac{1}{|B|}\|\mathbf1_{B}\|_X\|\mathbf1_{B}\|_{X'}\le C.$$
\end{lemma}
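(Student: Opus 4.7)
The plan is to exploit the pointwise lower bound of the Hardy--Littlewood maximal function against averages over the ball $B$, which is a standard trick once translated to the associate-norm framework. The only hypothesis used will be the boundedness of $\mathcal{M}$ on $X$; the ball Banach structure guarantees $\|\mathbf{1}_B\|_X,\|\mathbf{1}_B\|_{X'}\in(0,\infty)$ for every $B\in\BB$, so all manipulations are legitimate.

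First, I would unfold the definition \eqref{asso} of the associate norm to write
$$
\|\mathbf{1}_B\|_{X'}=\sup_{\{g\in X:\,\|g\|_X\le 1\}}\int_{\rn}|g(x)|\,\mathbf{1}_B(x)\,dx
=\sup_{\{g\in X:\,\|g\|_X\le 1\}}\int_B|g(x)|\,dx,
$$
so it suffices to estimate $\int_B|g|$ for an arbitrary $g\in X$ with $\|g\|_X\le 1$. The key pointwise observation is that for every $x\in B$, the ball $B$ itself is admissible in the supremum defining $\mathcal{M}(g)(x)$ in \eqref{mm}, which gives
$$
\left[\frac{1}{|B|}\int_B|g(y)|\,dy\right]\mathbf{1}_B(x)\le \mathcal{M}(g)(x)\quad\text{for a.e. } x\in\rn.
$$

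Next I would apply $\|\cdot\|_X$ to this pointwise inequality. By the lattice property in Definition \ref{Debqfs}(ii) and the assumed boundedness of $\mathcal{M}$ on $X$,
$$
\frac{1}{|B|}\int_B|g(y)|\,dy\cdot\|\mathbf{1}_B\|_X\le\|\mathcal{M}(g)\|_X\lesssim\|g\|_X\le 1.
$$
Rearranging yields $\int_B|g|\,dy\lesssim|B|/\|\mathbf{1}_B\|_X$, and taking the supremum over all admissible $g$ produces
$$
\|\mathbf{1}_B\|_{X'}\lesssim\frac{|B|}{\|\mathbf{1}_B\|_X},
$$
which is the claimed inequality after multiplying through by $\|\mathbf{1}_B\|_X/|B|$.

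There is no substantive obstacle here; the argument is essentially one display line once one sees that the job of ``coupling'' $X$ and $X'$ is done by the maximal function. The one minor thing to verify is that $\|\mathbf{1}_B\|_X>0$ so that the division in the last step makes sense, which is immediate from Definition \ref{Debqfs}(i) together with the fact that $|B|>0$.
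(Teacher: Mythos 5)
Your argument is correct and complete: the pointwise bound $\bigl[\frac{1}{|B|}\int_B|g|\bigr]\mathbf{1}_B\le\mathcal{M}(g)$, the lattice property of $\|\cdot\|_X$, and the assumed boundedness of $\mathcal{M}$ on $X$ give $\int_B|g|\lesssim|B|/\|\mathbf{1}_B\|_X$ uniformly in $g$ with $\|g\|_X\le1$, and taking the supremum in the definition of $\|\cdot\|_{X'}$ yields the claim with $C$ the operator norm of $\mathcal{M}$ on $X$. The paper does not prove this lemma but quotes it from Izuki and Sawano \cite{is}, and your proof is exactly the standard argument behind that citation, so there is nothing to add beyond the minor bookkeeping you already note (homogeneity of the norm to pull out the constant, and $0<\|\mathbf{1}_B\|_X<\infty$ from Definition \ref{Debqfs}(i) and (iv)).
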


In what follows, for any operator $\mathcal{L}$ mapping $X$ into itself,
we use $\|\mathcal{L}\|_{X\to X}$ to denote its operator norm.
Also, it should be pointed out that,
for any given $\Omega\in L^\fz(\mathbb{S}^{n-1})$ and $b\in L^1_\loc(\rn)$,
and any bounded measurable set $F\subset\rn$,
$[b,T_\Omega](\mathbf{1}_F)$ has the following integral representation: for any $x\in\rn\setminus \overline{F}$,
$$[b,T_\Omega](\mathbf{1}_F)(x)
=\int_F [b(x)-b(y)]\frac{\Omega(x-y)}{|x-y|^n}\,dy.$$
\begin{theorem}\label{thm-bdd-2}
Let $X$ be a ball Banach function space and $b\in L_{\loc}^1(\rn)$.
Assume that $\cm$ is bounded on $X$.
Let $\Omega\in L^\infty(\mathbb{S}^{n-1})$ satisfy \eqref{deg0}, \eqref{mean0},
and there exists an open set $\Lambda\subset \mathbb{S}^{n-1}$ such that
$\Omega$ does not change sign on $\Lambda$.
If $[b,T_\Omega]$ is bounded on $X$,
then $b\in {\rm BMO}(\rn)$ and there exist a positive constant $C$,
independent of $b$, such that
$$\|b\|_{{\rm BMO}(\rn)}\le C \lf\|[b,T_\Omega]\r\|_{X\to X}.$$
\end{theorem}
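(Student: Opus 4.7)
The plan is to bound the local mean oscillation $\omega_\lambda(b;B)$ uniformly in $B$ by $\|[b,T_\Omega]\|_{X\to X}$, from which the conclusion $\|b\|_{\BMO(\rn)}\lesssim\|[b,T_\Omega]\|_{X\to X}$ follows upon taking the supremum over balls and invoking Lemma \ref{GR} (with, say, $\lambda=1/2$). Fix an arbitrary ball $B_0:=B(x_0,r_0)\subset\rn$ and apply the geometrical Lemma \ref{thmb3} to produce constants $\varepsilon_0\in(0,\infty)$ and $k_0\in(10\sqrt n,\infty)$ (depending only on $\Omega$ and $n$), a point $x_1\in\rn$ with $|x_1-x_0|=2k_0r_0$, and measurable sets $E\subset B_0$, $F\subset B(x_1,r_0)$, $G\subset E\times F$ enjoying the three sign/separation properties listed there.

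Since $F$ is bounded and $\mathrm{dist}(E,F)>0$, for every $x\in E$ the commutator admits the pointwise representation
\begin{equation*}
[b,T_\Omega](\mathbf{1}_F)(x)=\int_F [b(x)-b(y)]\frac{\Omega(x-y)}{|x-y|^n}\,dy.
\end{equation*}
Property (ii) of Lemma \ref{thmb3} forces the integrand to have constant sign on $E\times F$, so the absolute value moves inside the integral. On $G$ one then replaces $|b(x)-b(y)|$ by $\omega_\lambda(b;B_0)$ via (i), $|\Omega((x-y)/|x-y|)|$ by $\varepsilon_0$ via (iii), and uses $|x-y|\le(2k_0+2)r_0$ to bound $|x-y|^{-n}$ from below by a constant multiple of $r_0^{-n}$. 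Integrating the resulting pointwise lower bound over $x\in E$ and using $\int_E|F_x|\,dx=|G|\gtrsim|B_0|^2$, where $F_x:=\{y\in F:(x,y)\in G\}$, I would obtain
\begin{equation*}
\int_E\lf|[b,T_\Omega](\mathbf{1}_F)(x)\r|\,dx\gtrsim \omega_\lambda(b;B_0)\,|B_0|.
\end{equation*}

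For the matching upper bound, Lemma \ref{LeHolder} combined with the assumed boundedness of $[b,T_\Omega]$ on $X$ yields
\begin{equation*}
\int_E\lf|[b,T_\Omega](\mathbf{1}_F)\r|\,dx\le\|\mathbf{1}_E\|_{X'}\|[b,T_\Omega](\mathbf{1}_F)\|_X\le\|[b,T_\Omega]\|_{X\to X}\|\mathbf{1}_{B_0}\|_{X'}\|\mathbf{1}_{B(x_1,r_0)}\|_X.
\end{equation*}
Since $B(x_1,r_0)\subset(2k_0+1)B_0$ and $\cm$ is bounded on $X$, Lemma \ref{as1} with $\theta=1$ gives $\|\mathbf{1}_{B(x_1,r_0)}\|_X\lesssim\|\mathbf{1}_{B_0}\|_X$, with implicit constant depending only on $\Omega$ and $n$; Lemma \ref{rh2} then gives $\|\mathbf{1}_{B_0}\|_X\|\mathbf{1}_{B_0}\|_{X'}\lesssim|B_0|$. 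Combining the lower and upper bounds produces $\omega_\lambda(b;B_0)\lesssim\|[b,T_\Omega]\|_{X\to X}$ uniformly in $B_0$, which is exactly what is needed.

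The main obstacle is conceptual rather than technical: the norm on $X$ admits no explicit integral representation, so one cannot raise the pointwise lower bound on $|[b,T_\Omega](\mathbf{1}_F)|$ to a power and integrate against a weight as in $L^p$. The device bypassing this is to pair the commutator with $\mathbf{1}_E$ via the H\"older-type duality of Lemma \ref{LeHolder}, and then to collapse the resulting product $\|\mathbf{1}_{B_0}\|_X\|\mathbf{1}_{B_0}\|_{X'}$ back to $|B_0|$ through the reverse-H\"older estimate of Lemma \ref{rh2} available under the sole hypothesis that $\cm$ be bounded on $X$; the dilation-doubling provided by Lemma \ref{as1} with $\theta=1$ absorbs the geometric offset between $B_0$ and $B(x_1,r_0)$.
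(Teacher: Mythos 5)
Your proposal is correct and follows essentially the same route as the paper's proof: reduce to a uniform bound on $\oz_\lz(b;B)$ via Lemma \ref{GR}, use the geometric sets $E$, $F$, $G$ from Lemma \ref{thmb3} together with the sign property to obtain the lower bound $\int_E|[b,T_\Omega](\mathbf{1}_F)|\,dx\gtrsim\oz_\lz(b;B)|B|$, and then close the estimate with Lemma \ref{LeHolder}, the operator-norm bound, the doubling estimate \eqref{EqHLMS} (Lemma \ref{as1} with $\theta=1$), and Lemma \ref{rh2}. The only differences are cosmetic (you phrase the lower bound via Tonelli over the sections $F_x$ of $G$, whereas the paper starts from $\oz_\lz(b;B)|G|$ and divides by $|G|\ge\frac{\lambda}{8}|B|^2$), so no further comment is needed.
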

\begin{proof}
Let $\lz\in(0,1/2]$.
To prove this theorem, by Lemma \ref{GR}, it suffices to show that
there exists a positive constant $C$, independent of $b$, such that,
for any ball $B\subset\rn$,
\begin{equation}\label{jj}
\oz_{\lz}(b;B)\le C.
\end{equation}
Let $b\in L_{\loc}^1(\rn)$ and $B:=B(x_0,r_0)$ with $x_0\in\rn$ and $r_0\in(0,\infty)$.
Let $\varepsilon_0$, $k_0$, $G$, $E$, and $F$
be as in Lemma \ref{thmb3}.
Then, by (i) and (iii) of Lemma \ref{thmb3},
we conclude that
$$
\oz_{\lz}(b;B)|G|\le
\frac{1}{\varepsilon_0}\int_{G}|b(x)-b(y)|
\lf|\Omega(x-y)\r|\,dx\,dy.
$$
From this, the fact that $|x-y|\le2(k_0+1)r_0$
for any $(x,y)\in G$, Lemma \ref{thmb3}(ii),
$|G|\geq\frac{\lambda}{8}|B|^2$,
the definition of $[b,T_\Omega]$,
and the observation $\overline{E}\cap \overline{F}=\emptyset$,
we deduce that
\begin{align*}
\oz_{\lz}(b;B)&\le
\frac{[2(k_0+1)r_0]^n}{\varepsilon_0|G|}\int_{G}|b(x)-b(y)|
\frac{|\Omega(x-y)|}{|x-y|^n}\,dx\,dy\\
&\le
\frac{8[2(k_0+1)r_0]^n}{\varepsilon_0\lz|B|^2}
\int_{E}\lf|\int_F [b(x)-b(y)]
\frac{\Omega(x-y)}{|x-y|^n}\,dy\r|\, dx\\
&\le
\frac{8[2(k_0+1)r_0]^n}{\varepsilon_0\lz|B|^2}
\int_{E}\lf|[b,T_\Omega](\mathbf{1}_F)(x)\r|\, dx,
\end{align*}
which, combined with Lemmas \ref{LeHolder} and \ref{rh2}, \eqref{EqHLMS},
and the fact that
$[b,T_\Omega]$ is bounded on $X$,
further implies that
\begin{align*}
\oz_{\lz}(b;B)&\le
\frac{8[2(k_0+1)r_0]^n}{\varepsilon_0\lz|B|^2}
\lf\|[b,T_\Omega](\mathbf{1}_F)\r\|_{X}
\|\mathbf{1}_B\|_{X'}
\ls \frac{1}{|B|} \lf\|[b,T_\Omega]\r\|_{X\to X}
\lf\|\mathbf{1}_{k_0B}\r\|_{X}
\|\mathbf{1}_B\|_{X'}\\
&\ls \lf\|[b,T_\Omega]\r\|_{X\to X}
\frac{1}{|B|}
\lf\|\mathbf{1}_{B}\r\|_{X}
\|\mathbf{1}_B\|_{X'}
\ls \lf\|[b,T_\Omega]\r\|_{X\to X},
\end{align*}
where the implicit positive constants depend only on $\lz$, $k_0$, $\varepsilon_0$, and $n$.
This finishes the proof of \eqref{jj}
and hence of Theorem \ref{thm-bdd-2}.
\end{proof}

\begin{remark}\label{guo}
\begin{itemize}
\item[{\rm(i)}]
The necessity of the boundedness of commutators was also obtained by
Guo et al. \cite[Theorem 2.1]{glw}
under the assumption $\Omega\in C(\mathbb{S}^{n-1})$
and some other specific conditions on the norm of $\Omega$ in ball Banach function spaces.
Observe that Theorem \ref{thm-bdd-2} and \cite[Theorem 2.1]{glw}
can not cover each other.

\item[{\rm(ii)}]
It is easy to see that Theorem \ref{thm0}(i)
is a direct corollary of Theorems \ref{thm-bdd-1} and \ref{thm-bdd-2}.
\end{itemize}
\end{remark}

\section{Compactness characterization of commutators on ball Banach \\ function spaces}\label{s3}

In this section,
applying Theorems \ref{thm-bdd-1} and \ref{thm-bdd-2},
we further investigate the compactness
of the commutator on ball Banach function spaces.

In what follows, the space ${\rm CMO}(\mathbb R^n)$ is defined to be the
closure in $\BMO(\rn)$ of $C^\fz_{\rm c}(\rn)$ [the set of all infinitely
differentiable functions on $\rn$ with compact support].
Recall that the Hardy--Littlewood operator $\cm$ is defined in \eqref{mm},
and the commutator $[b,T_\Omega]$ in \eqref{z}.

\begin{theorem}\label{thm-cpt}
Let $X$ be a ball Banach function space satisfying Assumption \ref{assum}(ii),
$\Omega\in L^\infty(\mathbb{S}^{n-1})$ satisfy \eqref{deg0}, \eqref{mean0}, and \eqref{Dini},
and $T_\Omega$ be a singular integral operator with homogeneous kernel $\Omega$.
Assume that $\cm$ is bounded on $X$.
If $b\in {\rm CMO}(\rn)$, then the commutator $[b,T_\Omega]$ is compact on $X$.
\end{theorem}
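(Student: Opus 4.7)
The plan is to reduce to the dense subclass $C_c^\infty(\rn)$ and then invoke the generalized Fr\'{e}chet--Kolmogorov theorem (Theorem \ref{l-fre}) on $X$. Since $\mathrm{CMO}(\rn)$ is, by definition, the $\BMO(\rn)$-closure of $C_c^\infty(\rn)$, for every $b\in\mathrm{CMO}(\rn)$ there exists $\{b_j\}_{j\in\nn}\subset C_c^\infty(\rn)$ with $\|b-b_j\|_{\BMO(\rn)}\to 0$. Theorem \ref{thm-bdd-1} then yields
\begin{equation*}
\lf\|[b,T_\Omega]-[b_j,T_\Omega]\r\|_{X\to X}
=\lf\|[b-b_j,T_\Omega]\r\|_{X\to X}\ls\|b-b_j\|_{\BMO(\rn)}\to 0,
\end{equation*}
so $[b,T_\Omega]$ is the operator-norm limit on $X$ of the sequence $\{[b_j,T_\Omega]\}_{j\in\nn}$. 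Because the compact operators form a closed subspace of the bounded operators on $X$, it suffices to prove the result under the additional assumption $b\in C_c^\infty(\rn)$.

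Fix such a $b$ with $\supp b\subset B_0$ and let $\mathcal{F}:=\{[b,T_\Omega](f):\ \|f\|_X\le 1\}$. I would verify the three hypotheses of Theorem \ref{l-fre}: (a) $\mathcal{F}$ is uniformly bounded in $X$, which is immediate from Theorem \ref{thm-bdd-1}; (b) $\mathcal{F}$ is uniformly tight at infinity, i.e., $\sup_{g\in\mathcal{F}}\|\mathbf{1}_{B(\vec0_n,R)^\complement}g\|_X\to 0$ as $R\to\infty$; and (c) $\mathcal{F}$ is uniformly equicontinuous under translation, i.e., $\sup_{\|f\|_X\le 1}\|[b,T_\Omega](f)(\cdot+h)-[b,T_\Omega](f)(\cdot)\|_X\to 0$ as $|h|\to 0$. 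For (b), I would use that on $(2B_0)^\complement$, $[b,T_\Omega](f)=-T_\Omega(bf)$ since $b$ vanishes there; the off-diagonal size of the kernel together with $b\in L^\infty(\rn)$ with compact support shows $|[b,T_\Omega](f)(x)|\ls|x|^{-n}\int_{B_0}|f|$ for $|x|$ large, and the H\"older-type inequality (Lemma \ref{LeHolder}) plus the embedding Lemma \ref{embed1} and the scaling estimate \eqref{EqHLMS} give the required decay of $\|\mathbf{1}_{B(\vec0_n,R)^\complement}[b,T_\Omega](f)\|_X$ as $R\to\infty$, uniformly in $f$.

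The main obstacle is condition (c), since $\|\cdot\|_X$ is neither translation invariant nor amenable to Lebesgue's dominated convergence. Following Clop--Cruz \cite{CC13}, for $\delta\in(0,\infty)$ I would split $T_\Omega=T_\Omega^{\delta}+S_\delta$, where $T_\Omega^{\delta}$ is obtained by multiplying the kernel by a smooth cutoff $\varphi_\delta(|x-y|)$ vanishing on $[0,\delta/2]$ and equal to $1$ on $[\delta,\infty)$, so that $S_\delta$ has kernel supported in $\{|x-y|\le\delta\}$. The Lipschitz regularity of $b$ bounds the kernel of $[b,S_\delta]$ pointwise by $\|\nabla b\|_{L^\infty(\rn)}\,|\Omega(x-y)|\,|x-y|^{1-n}\mathbf{1}_{\{|x-y|\le\delta\}}$, and a direct estimate gives $|[b,S_\delta](f)(x)|\ls\delta\,\cm(f)(x)$; the boundedness of $\cm$ on $X$ (Assumption \ref{assum}) then yields $\|[b,S_\delta]\|_{X\to X}\ls\delta$. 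For the smooth piece, regularity of the truncated kernel and the compact support of $b$ produce a pointwise bound of the form
\begin{equation*}
\lf|[b,T_\Omega^{\delta}](f)(x+h)-[b,T_\Omega^{\delta}](f)(x)\r|
\ls C_{\delta,b}\,|h|\,\cm(f)(x)\,\mathbf{1}_{B(\vec0_n,\rho)}(x)+E_{\delta,b}(x,h),
\end{equation*}
for some $\rho=\rho(\delta,b)\in(0,\infty)$, with an error term $E_{\delta,b}(\cdot,h)$ decaying off a large ball uniformly for $|h|\le 1$. Applying the boundedness of $\cm$ on $X$ together with (b) then gives $\sup_{\|f\|_X\le 1}\|[b,T_\Omega^{\delta}](f)(\cdot+h)-[b,T_\Omega^{\delta}](f)\|_X\to 0$ as $|h|\to 0^+$ for each fixed $\delta$. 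Sending $\delta\to 0^+$ first and $|h|\to 0$ afterwards establishes (c) and completes the proof.
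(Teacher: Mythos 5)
Your overall route is the same as the paper's: reduce to $b\in C_{\rm c}^\infty(\rn)$ via density in ${\rm CMO}(\rn)$, Theorem \ref{thm-bdd-1}, and the closedness of compact operators; then use a Clop--Cruz smooth truncation and the generalized Fr\'echet--Kolmogorov theorem (Theorem \ref{l-fre}). Conditions (i) and (ii) are handled as in the paper. The genuine gap is in your treatment of condition (iii). You apply Theorem \ref{l-fre} to the full family $\{[b,T_\Omega](f):\ \|f\|_X\le1\}$ and use the splitting $T_\Omega=T_\Omega^{\delta}+S_\delta$ only inside (iii), concluding by ``sending $\delta\to0^+$ first and $|h|\to0$ afterwards.'' For that order of limits you must bound the \emph{translated} error $\|[b,S_\delta](f)(\cdot+h)\|_X$ uniformly in $h$, and this does not follow from $\|[b,S_\delta]\|_{X\to X}\ls\delta$: the natural pointwise bound is $|[b,S_\delta](f)(x+h)|\ls\delta\,\cm f(x+h)$, and since $\|\cdot\|_X$ is not translation invariant (exactly the obstacle you flagged), $\|\cm f(\cdot+h)\|_X$ is not controlled by $\|f\|_X$. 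The paper avoids this by first proving $\|[b,T_\Omega]-[b,T_\Omega^{(\kappa)}]\|_{X\to X}\to0$ (Lemma \ref{Teta}) and then showing each truncated commutator $[b,T_\Omega^{(\kappa)}]$ is compact, so the near-diagonal error is never translated; your argument needs this reorganization (your estimates for (i) and (ii) apply verbatim to $[b,T_\Omega^{\delta}]$), and as written the final step of (iii) does not go through.

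There is a second gap in the claimed pointwise bound for the smooth piece. Writing the translation difference as $[b(x)-b(x+h)]\,T_\Omega^{\delta}f(x)$ plus the kernel-difference term, the first term is indeed supported near $\supp b$, but it is of size $|h|\,\|\nabla b\|_{L^\infty(\rn)}\,|T_\Omega^{\delta}f(x)|$, and for a general $f\in X$ the truncated singular integral $T_\Omega^{\delta}f(x)$ is \emph{not} dominated by $C_{\delta}\,\cm f(x)$: the tail $\int_{\{y\in\rn:\ |x-y|\ge\delta\}}|f(y)|\,|x-y|^{-n}\,dy$ cannot be summed against $\cm f$, and the cancellation of $\Omega$ is essential. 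This is why the paper bounds that term by $|h|[\cm f(x)+T_\Omega^\ast f(x)]$ and needs the boundedness of the maximal truncated operator $T_\Omega^\ast$ on $X$, obtained from the weighted estimate (Lemma \ref{coa5}) via extrapolation under Assumption \ref{assum}(ii) (Proposition \ref{pp6}); your sketch never invokes this ingredient, and your error term $E_{\delta,b}$, which is supposed to decay off a large ball, cannot absorb a contribution living on a bounded set. The kernel-difference term is fine once the Dini modulus $\omega_\infty$ is used, as you indicate.
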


\begin{theorem}\label{thm-cpt2}
Let $X$ be a ball Banach function space satisfying Assumption \ref{assum}(i),
and $b\in L_{\loc}^1(\rn)$.
Let
$\Omega\in L^\infty(\mathbb{S}^{n-1})$ satisfy \eqref{deg0} and \eqref{mean0}, and
there exists an open set $\Lambda\subset \mathbb{S}^{n-1}$ such that
$\Omega$ does not change sign on $\Lambda$.
If $[b,T_\Omega]$ is compact on $X$, then $b\in {\rm CMO}(\rn)$.
\end{theorem}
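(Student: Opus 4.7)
\medskip

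\noindent\textbf{Proof proposal for Theorem \ref{thm-cpt2}.}
The plan is to argue by contradiction: supposing $b\notin{\rm CMO}(\rn)$, I will construct a sequence $\{f_k\}\subset X$ with $\|f_k\|_X=1$ whose images $\{[b,T_\Omega](f_k)\}$ contain no Cauchy subsequence, thereby contradicting compactness. Since compact operators are bounded, Theorem \ref{thm-bdd-2} already forces $b\in{\rm BMO}(\rn)$, so the only scenario to rule out is $b\in{\rm BMO}(\rn)\setminus{\rm CMO}(\rn)$. By the Uchiyama-type characterization of ${\rm CMO}(\rn)$ via local mean oscillation (Lemma \ref{GR}), this means there exist $\lambda\in(0,1/2]$, $\delta_0>0$, and a sequence of balls $\{B_k\}_{k\in\nn}$ with $\oz_\lz(b;B_k)\ge\delta_0$ satisfying exactly one of the alternatives: (a) $r_{B_k}\to 0^+$; (b) $r_{B_k}\to\infty$; (c) $r_{B_k}$ is bounded but $\mathrm{dist}(B_k,\vec{0}_n)\to\infty$. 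In each case I may (after passing to a subsequence) arrange that the $B_k$ live in pairwise well-separated regions.

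For each $B_k=B(x_k,r_k)$, apply Lemma \ref{thmb3} to obtain constants $\varepsilon_0>0$ and $k_0>10\sqrt n$ (independent of $k$) and sets $E_k\subset B(x_k,r_k)$, $F_k\subset B(x_k',r_k)$ with $|x_k'-x_k|=2k_0 r_k$, and $G_k\subset E_k\times F_k$ enjoying the geometric properties (i)--(iii) there. Define
$$
f_k:=\frac{\mathbf{1}_{F_k}}{\|\mathbf{1}_{F_k}\|_X},
$$
so that $\|f_k\|_X=1$. By the assumed compactness of $[b,T_\Omega]$ on $X$, some subsequence $\{[b,T_\Omega](f_{k_j})\}_{j\in\nn}$ converges in $X$.

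The next step is to derive a uniform lower bound of the form
$$
\bigl\|\mathbf{1}_{E_k}[b,T_\Omega](f_k)\bigr\|_X\ge c_0\,\delta_0,
$$
with $c_0>0$ independent of $k$. This will be the content of Proposition \ref{thmm2}: repeating the geometric argument from the proof of Theorem \ref{thm-bdd-2} with $\mathbf{1}_F$ replaced by $f_k$, combining properties (i)--(iii) of Lemma \ref{thmb3} with Lemma \ref{rh2}, the doubling estimate \eqref{EqHLMS}, and H\"older's inequality (Lemma \ref{LeHolder}), one controls the ratio $\|\mathbf{1}_{E_k}\|_X/\|\mathbf{1}_{F_k}\|_X$ from below by an absolute constant. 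Complementarily, Proposition \ref{thmm3} should supply an upper bound showing that $\|\mathbf{1}_{E_{k_j}}[b,T_\Omega](f_{k_\ell})\|_X$ becomes arbitrarily small when $j,\ell\to\infty$ with $j\ne\ell$, in each of the three failure modes; this is the analogue of the classical off-diagonal tail estimate, but it is now derived abstractly via the complete John--Nirenberg inequality on $X$ from Lemma \ref{BMO-X}, together with \eqref{EqHLMS} and Lemma \ref{rh2}. Subtracting, $\|[b,T_\Omega](f_{k_j})-[b,T_\Omega](f_{k_\ell})\|_X\gtrsim c_0\delta_0$ for all large $j\ne\ell$, which contradicts Cauchyness of the chosen subsequence.

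The main obstacle is precisely the passage from the classical Lebesgue-type argument of Uchiyama \cite{u} and Guo et al.\ \cite{gwy20} to the present abstract setting, in which translation invariance, the Lebesgue dominated convergence theorem, and an explicit norm formula are all unavailable. The work is therefore concentrated in establishing the two quantitative companions (Propositions \ref{thmm2} and \ref{thmm3}) that replace these missing tools, and in verifying in each of the three geometric alternatives (a)--(c) that the separation of the balls $B_{k_j}$ is strong enough for the upper estimate of Proposition \ref{thmm3} to genuinely beat the fixed lower bound $c_0\delta_0$ of Proposition \ref{thmm2}.
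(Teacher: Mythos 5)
Your proposal follows the same overall route as the paper: reduce to $b\in\BMO(\rn)$ via Theorem \ref{thm-bdd-2}, negate the local-mean-oscillation characterization of ${\rm CMO}(\rn)$ (note this is Lemma \ref{l-cmo char}, not Lemma \ref{GR}, which characterizes $\BMO(\rn)$), test the commutator on $f_k:=\mathbf 1_{F_k}/\|\mathbf 1_{F_k}\|_X$ with $E_k$, $F_k$ from Lemma \ref{thmb3}, and play a lower estimate (Proposition \ref{thmm2}) against a tail estimate (Proposition \ref{thmm3}) to contradict relative compactness.

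The genuine gap is the assertion that ``in each case I may (after passing to a subsequence) arrange that the $B_k$ live in pairwise well-separated regions,'' together with the resulting two-sided claim that $\|\mathbf 1_{E_{k_j}}[b,T_\Omega](f_{k_\ell})\|_X$ is small for all $j\ne\ell$. When condition (i) or (ii) of Lemma \ref{l-cmo char} fails, the witness balls come with radii tending to $0$ or to $\infty$ but with no control on their centers; they may all contain a common point and be nested, and nothing in your argument produces disjoint witnesses --- let alone witnesses whose dilates $2^{d_0}B_k$ avoid the other balls, which is what the decay estimate of Proposition \ref{thmm3} actually requires, since it gives smallness of $[b,T_\Omega](\mathbf 1_{F_\ell})$ only outside $2^{d_0}B_\ell$. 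If $B_\ell$ is a much smaller ball sitting inside $B_j$, the bulk of $[b,T_\Omega](f_\ell)$ lies inside $E_j$, and $\|\mathbf 1_{E_j}[b,T_\Omega](f_\ell)\|_X$ need not be small. The paper's proof avoids this by separating scales rather than positions: it passes to a subsequence with $|B_{j+1}|/|B_j|\le\min\{\lz^2/64,\,2^{-2d_0n}\}$, removes from $E_j$ an intermediate dilate $\widetilde B_k\supset 2^{d_0}B_k$ of measure at most $\frac{\lz}{8}|B_j|$, and --- crucially --- proves the lower estimate in a form stable under deleting an arbitrary measurable set $Q$ with $|Q|\le\frac{\lz}{8}|B|$ (the parameter $Q$ in Proposition \ref{thmm2}, which is absent from your formulation of the lower bound on all of $E_k$). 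Then only the one-sided smallness of $\|[b,T_\Omega](f_k)\mathbf 1_{E_j\setminus\widetilde B_k}\|_X$ is needed, and genuine spatial disjointness of dilated balls is used only in the third case (balls of comparable size escaping to infinity), where it is indeed available. Without the excision parameter in the lower estimate and this scale-separation bookkeeping, your comparison of the two estimates breaks down precisely in the failure modes (a) and (b).
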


\begin{remark}\label{rem-omega-cpt}
It is easy to see that the assumptions on $\Omega$ in
Theorems \ref{thm-cpt} and \ref{thm-cpt2} are much weaker than
the Lipschitz condition which was also
used in Uchiyama \cite[Theorems 1 and 2]{u},
and hence Theorem \ref{thm0}(ii) is a direct corollary of
Theorems \ref{thm-cpt} and \ref{thm-cpt2}.
\end{remark}

The proofs of Theorems \ref{thm-cpt} and \ref{thm-cpt2} are given, respectively, in
Subsections \ref{s3.1} and \ref{s3.2} below.

\subsection{Proof of Theorem \ref{thm-cpt}}\label{s3.1}

To show Theorem \ref{thm-cpt}, we need several key lemmas.
The first one is the following Minkowski-type inequality
for ball quasi-Banach function spaces.

\begin{lemma}\label{LeMI}
Let $X$ be a ball quasi-Banach function space
satisfying the triangle inequality as in \eqref{eq22x},
$E$ a measurable subset of $\rn$,
and $F$ a measurable function on $\rn\times E$.
Then
$$
\lf\|\int_{E}|F(\cdot,y)|\,dy\r\|_{X}
\le|E| \sup_{y\in E}\lf\|F(\cdot,y)\r\|_{X}.
$$
\end{lemma}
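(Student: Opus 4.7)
The plan is to prove the inequality by duality, leveraging Lemma~\ref{Lesdual} (which identifies $X$ with its second associate space $X''$) together with the H\"older inequality of Lemma~\ref{LeHolder} and a Tonelli-type swap. Set $h(x):=\int_{E}|F(x,y)|\,dy$, which is nonnegative and measurable on $\rn$ by Tonelli's theorem. We may assume the right-hand side is finite, since otherwise the inequality is trivial.

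First, using Lemma~\ref{Lesdual} (applicable since $X$ is a ball quasi-Banach function space satisfying the triangle inequality \eqref{eq22x}), we express
$$
\lf\|h\r\|_{X}=\lf\|h\r\|_{X''}
=\sup_{\{g\in X':\ \|g\|_{X'}=1\}}\int_{\rn}h(x)|g(x)|\,dx.
$$
Fix any such $g$. Since $|F(x,y)||g(x)|$ is nonnegative, Tonelli's theorem allows me to interchange the order of integration, yielding
$$
\int_{\rn}h(x)|g(x)|\,dx
=\int_{E}\lf[\int_{\rn}|F(x,y)||g(x)|\,dx\r]\,dy.
$$
For each fixed $y\in E$, the inner integral is controlled by Lemma~\ref{LeHolder} as
$$
\int_{\rn}|F(x,y)||g(x)|\,dx\le \lf\|F(\cdot,y)\r\|_{X}\|g\|_{X'}
\le \|g\|_{X'}\sup_{y\in E}\lf\|F(\cdot,y)\r\|_{X}.
$$

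Combining these inequalities and then trivially bounding the integral over $E$ of a constant gives
$$
\int_{\rn}h(x)|g(x)|\,dx\le |E|\,\|g\|_{X'}\sup_{y\in E}\lf\|F(\cdot,y)\r\|_{X}.
$$
Taking the supremum over all $g\in X'$ with $\|g\|_{X'}=1$ produces the desired estimate for $\|h\|_{X''}=\|h\|_X$. The only mild obstacle is ensuring the supremum on the right is well-defined and the application of Tonelli is justified, but both are handled by the nonnegativity of $|F|$ and the observation that we may assume $\sup_{y\in E}\|F(\cdot,y)\|_X<\infty$. No further structure of $X$ beyond the triangle inequality and the existing associate-space machinery is needed, which is exactly why the absolute continuity assumption from \cite[Theorem 3.1]{gz20} can be dropped.
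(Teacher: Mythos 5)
Your proposal is correct and follows essentially the same route as the paper's own proof: identify $\|\cdot\|_X$ with $\|\cdot\|_{X''}$ via Lemma~\ref{Lesdual}, then apply Tonelli and the H\"older inequality of Lemma~\ref{LeHolder} for each fixed $y\in E$, and bound by $|E|\sup_{y\in E}\|F(\cdot,y)\|_X$ before taking the supremum over $g\in X'$ with $\|g\|_{X'}=1$. The only cosmetic difference is that you make the trivial-case reduction and the finiteness of the supremum explicit, which the paper leaves implicit.
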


\begin{proof}
By Lemma \ref{Lesdual} and the fact that $X$ is a ball Banach function space, we have
\begin{align}\label{min1}
\lf\|\int_{E}|F(\cdot,y)|\,dy\r\|_{X}&=\lf\|\int_{E}|F(\cdot,y)|\,dy\r\|_{X''}\\
&=
\sup\lf\{\lf|\int_\rn\int_{E}|F(x,y)|\,dyg(x)\,dx\r|
:\  g\in X'\text{ such that }\|g\|_{X'}=1\r\}.\noz
\end{align}
From the Tonelli theorem and Lemma \ref{LeHolder}, it follows that,
for any $g\in X'$ such that $\|g\|_{X'}=1$,
\begin{align*}
\lf|\int_\rn\int_{E}F(x,y)\,dy g(x)\,dx\r|
&\le\int_\rn\int_{E}|F(x,y)||g(x)|\,dy\,dx\\
&=\int_{E}\int_\rn|F(x,y)||g(x)|\,dx\,dy\\
&\le\int_{E} \sup_{y\in E} \int_\rn|F(x,y)||g(x)|\,dx\,dy\\
&\ls\int_{E} \sup_{y\in E} \lf\|F(\cdot,y)\r\|_{X}\lf\|g\r\|_{X'}\,dy\\
&\sim |E| \sup_{y\in E}\lf\|F(\cdot,y)\r\|_{X},
\end{align*}
which, together with \eqref{min1}, then
implies the desired inequality.
This finishes the proof of Lemma \ref{LeMI}.
\end{proof}

\begin{definition}\label{def-eNet}
Let $\ez\in(0,\fz)$,
$\cf$ be a subset of the ball Banach function space $X$,
and $\cg\subset\cf$. Then $\cg$ is called an \emph{$\ez$-net} of $\cf$ if, for any $f\in\cf$,
there exists a $g\in\cg$ such that $\|f-g\|_X<\ez$.
Moreover, if $\cg$ is an $\ez$-net of $\cf$ and the cardinality of $\cg$ is finite,
then $\cg$ is called a \emph{finite $\ez$-net} of $\cf$.
Furthermore, $\cf$ is said to be \emph{totally bounded} if, for any $\ez\in(0,\fz)$,
there exists a finite $\ez$-net.
In addition, $\cf$ is said to be \emph{relatively compact} if the closure in $X$ of
$\cf$ is compact.
\end{definition}

From the Hausdorff theorem (see, for instance, \cite[p.\,13, Theorem]{Yosida95}),
it follows that a subset $\cf$ of a ball Banach function space $X$ is relatively compact
if and only if $\cf$ is totally bounded due to the completeness of $X$.

Next, we give a sufficient condition for subsets of ball Banach function spaces
to be totally bounded, which is a generalization in $X$ of
the well-known Fr\'{e}chet--Kolmogorov theorem in $L^p(\rn)$ with $p\in[1,\fz)$.

\begin{theorem}\label{l-fre}
Let $X$ be a ball Banach function space
satisfying the triangle inequality as in \eqref{eq22x}.
Then a subset $\cf$ of
$X$ is totally bounded if
the set $\cf$ satisfies the following three conditions:
\begin{itemize}
\item[{\rm(i)}] $\cf$ is bounded, namely, $$\sup_{f\in\cf}\|f\|_{X}<\infty;$$
\item[{\rm(ii)}] $\cf$ uniformly vanishes at infinity, namely,
for any given $\epsilon\in(0,\infty)$,
there exists a positive constant $M$ such that, for any $f\in\cf$,
$$\lf\|f{\mathbf 1}_{\{x\in\rn:\ |x|>M\}}\r\|_{X}<\epsilon;$$

\item[{\rm(iii)}] $\cf$ is uniformly equicontinuous, namely,
for any given $\epsilon\in(0,\infty)$,
there exists a positive constant $\rho$ such that,
for any $f\in\cf$ and $\xi\in \rn$ with $|\xi|\in[0,\rho)$,
$$\|f(\cdot+\xi)-f(\cdot)\|_{X}<\epsilon.$$
\end{itemize}

Conversely, assume that $X$ satisfies the following additional assumptions that $C_{\rm c}(\rn)$ is dense in
$X$ and, for any $f\in X$ and $y\in\rn$,
\begin{align}\label{+y}
\|f\|_X=\|f(\cdot+y)\|_X.
\end{align}
If a subset $\cf$ of $X$ is totally bounded,
then $\cf$ satisfies (i) through (iii).
\end{theorem}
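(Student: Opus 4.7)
My proof plan for the sufficiency direction adapts the classical Fr\'{e}chet--Kolmogorov strategy in three stages: spatial truncation via (ii), smoothing by a standard mollifier, and an Arzel\`a--Ascoli step carried out on a fixed compact set. Fix $\epsilon\in(0,\fz)$. By (ii), choose $M\in(0,\fz)$ such that $\|f\mathbf{1}_{\{x\in\rn:\ |x|>M\}}\|_X<\epsilon/3$ uniformly for $f\in\cf$. Pick a non-negative mollifier $\vz\in C_{\rm c}^\fz(\rn)$ with $\supp\vz\subset B(\vec0_n,1)$ and $\int_{\rn}\vz(y)\,dy=1$, and set $\vz_\rho(y):=\rho^{-n}\vz(y/\rho)$ for any $\rho\in(0,\fz)$.

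The first key estimate, $\|f-f*\vz_\rho\|_X<\epsilon/3$ uniformly in $f\in\cf$ for $\rho$ small, comes from writing $(f-f*\vz_\rho)(x)=\int_{B(\vec0_n,\rho)}\vz_\rho(y)[f(x)-f(x-y)]\,dy$ and invoking Lemma \ref{LeMI} to obtain
\[
\lf\|f-f*\vz_\rho\r\|_X\le|B(\vec0_n,\rho)|\cdot\lf\|\vz_\rho\r\|_{L^\fz(\rn)}\cdot\sup_{|y|<\rho}\lf\|f(\cdot-y)-f(\cdot)\r\|_X\ls\sup_{|y|<\rho}\lf\|f(\cdot-y)-f(\cdot)\r\|_X,
\]
so that shrinking $\rho$ and applying (iii) with $\xi:=-y$ yields the desired smallness. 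For the Arzel\`a--Ascoli step, I would observe that, when $x\in\overline{B(\vec0_n,M)}$, the integrand in $(f*\vz_\rho)(x)$ is supported in the single fixed ball $B(\vec0_n,M+\rho)$; Lemma \ref{LeHolder} then produces the uniform pointwise bound $|(f*\vz_\rho)(x)|\le\|\vz_\rho\|_{L^\fz(\rn)}\|\mathbf{1}_{B(\vec0_n,M+\rho)}\|_{X'}\|f\|_X$, with (i) supplying the uniformity in $f$. Equicontinuity follows from the uniform continuity of $\vz_\rho$ combined with the local $L^1$-integrability $\int_{B(\vec0_n,M+\rho)}|f(y)|\,dy\le C_{(B)}\|f\|_X$ guaranteed by Definition \ref{Debqfs}. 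Arzel\`a--Ascoli then furnishes a finite $\epsilon/(3\|\mathbf{1}_{B(\vec0_n,M)}\|_X)$-net $\{g_1,\ldots,g_N\}\subset C(\overline{B(\vec0_n,M)})$ of the restricted family, and $\{g_j\mathbf{1}_{B(\vec0_n,M)}\}_{j=1}^N$ is the required finite $\epsilon$-net of $\cf$ in $X$ after chaining the three estimates through the triangle inequality.

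For the converse, given a totally bounded $\cf$, take a finite $(\epsilon/3)$-net $\{g_1,\ldots,g_N\}$ of $\cf$ and, by density of $C_{\rm c}(\rn)$ in $X$, choose $h_j\in C_{\rm c}(\rn)$ with $\|g_j-h_j\|_X<\epsilon/3$. Property (i) is immediate. For (ii), take $M$ with $\bigcup_{j=1}^N\supp h_j\subset B(\vec0_n,M)$; then for any $f\in\cf$ and the nearest $g_j$, $\|f\mathbf{1}_{\{|x|>M\}}\|_X\le\|f-g_j\|_X+\|g_j-h_j\|_X+\|h_j\mathbf{1}_{\{|x|>M\}}\|_X<\epsilon$ because the last term vanishes. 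For (iii), the triangle inequality combined with the translation invariance \eqref{+y} reduces matters to the uniform continuity of each of the finitely many $h_j$: when $|\xi|<1$, $\|h_j(\cdot+\xi)-h_j\|_X$ is controlled by the modulus of continuity of $h_j$ times $\|\mathbf{1}_{B(\vec0_n,M+1)}\|_X$, which tends to $0$ uniformly in $j\in\{1,\ldots,N\}$ as $|\xi|\to 0$.

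The principal obstacle is that the sufficiency must proceed without either translation invariance or absolute continuity of the norm, both of which are standard ingredients in classical proofs of the Fr\'{e}chet--Kolmogorov theorem. The role of Lemma \ref{LeMI}, proved immediately beforehand, is precisely to supply a norm-level Minkowski inequality that bypasses these assumptions. The second subtle point, securing the uniform pointwise bound for $f*\vz_\rho$ on $\overline{B(\vec0_n,M)}$, is handled without translating the $X'$-norm of characteristic functions of small balls by dominating every $B(x,\rho)$ with $x\in\overline{B(\vec0_n,M)}$ by the single fixed ball $B(\vec0_n,M+\rho)$.
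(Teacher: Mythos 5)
Your argument is correct, but the sufficiency half takes a genuinely different route from the paper's. After the same truncation via (ii), the paper never mollifies: it projects $f\mathbf{1}_{{\mathfrak R}_M}$ onto the finite-dimensional space of functions constant on a fixed dyadic grid of side length $2^{i_\ez}$ (the cube-average map $\Phi$), controls $\|f\mathbf{1}_{{\mathfrak R}_M}-\Phi(f\mathbf{1}_{{\mathfrak R}_M})\|_X$ by Lemma \ref{LeMI} together with (iii), and then extracts a finite net from a bounded subset of a finite-dimensional space. You instead convolve with $\vz_\rho$, use Lemma \ref{LeMI} and (iii) to bound $\|f-f*\vz_\rho\|_X$ (correctly: the factor $|B(\vec0_n,\rho)|\,\|\vz_\rho\|_{L^\fz(\rn)}$ is independent of $\rho$), and run Arzel\`a--Ascoli on $\overline{B(\vec0_n,M)}$, transferring the sup-norm net to an $X$-net via $\|h\mathbf{1}_{B(\vec0_n,M)}\|_X\le\|h\|_{L^\fz}\|\mathbf{1}_{B(\vec0_n,M)}\|_X$. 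Both proofs rest on the same two pillars, namely the Minkowski-type inequality of Lemma \ref{LeMI} (which lets one exploit (iii) without translation invariance or absolute continuity of $\|\cdot\|_X$) and crude domination by the characteristic function of one fixed bounded set; the paper's version is a bit more self-contained (no Arzel\`a--Ascoli, no need to verify continuity and uniform boundedness of $f*\vz_\rho$, which in your argument legitimately come from $f\in L^1_{\loc}(\rn)$ and Lemma \ref{LeHolder}), while yours stays closer to the classical Fr\'echet--Kolmogorov scheme. Your converse is essentially the paper's argument, with the cleaner device of separate approximants $h_j\in C_{\rm c}(\rn)$. Two cosmetic repairs: Definition \ref{def-eNet} formally asks the net to lie inside $\cf$, so at the end replace each used center by a nearby element of $\cf$ at the cost of doubling the radius (the paper glosses over the same point); and in your verification of (iii) the chain gives $2\|f-h_j\|_X+\|h_j(\cdot+\xi)-h_j\|_X<4\ez/3+o(1)$ with your stated constants, so start from an $\ez/6$-net --- a trivial adjustment, and the paper's own constants there are equally loose.
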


\begin{proof} We first show the first part of this theorem.
To achieve this, let $\cf\subset X$ satisfy (i), (ii), and (iii).
We now prove that $\cf$ is totally bounded. To this end,
by the fact that $X$ is a Banach space,
it suffices to find a finite $\ez$-net of $\cf$
for any given $\ez\in(0,\fz)$.

For any $i\in\zz$, let ${\mathfrak R}_{i}:=[2^{-i},2^i]^n$.
From (ii), we deduce that there exists an $M\in\nn$
such that, for any $f\in\cf$,
$$\lf\|f-f{\mathbf 1}_{\mathfrak{R}_M}\r\|_{X}<\ez/3.$$
Therefore, to find a finite $\ez$-net of $\cf$,
it suffices to find a finite $2\ez/3$-net of $\{f{\mathbf 1}_{\mathfrak{R}_M}\}_{f\in\cf}$.
To achieve this, we use the following finite dimensional method similar to
that used in \cite{CC13,gz20}.

First, by (iii), we conclude that there exists an $i_\ez\in\zz$ such that,
for any $f\in\cf$ and $\xi\in {\mathfrak R}_{i_\ez}$,
\begin{align}\label{f-f}
\|f(\cdot+\xi)-f(\cdot)\|_{X}<2^{-n}\ez/3.
\end{align}
Observe that, for any $x\in {\mathfrak R}_M$, there exists a unique dyadic cube
$Q_x:=\Pi_{j=1}^n[m_j2^{i_\ez},(m_j+1)2^{i_\ez})$
which has side length $2^{i_\ez}$ and contains $x$ for some integers $m_j$.
For any $f\in\cf$ and $x\in\rn$, let
$$\Phi(f{\mathbf 1}_{\mathfrak{R}_M})(x):=
\begin{cases}
{\displaystyle f_{Q_x}=\frac1{|Q_x|}\int_{Q_x}f(y)\,dy} &{\rm if\ }x\in \mathfrak{R}_M,\\
0 &{\rm if\ }x\in \rn\setminus \mathfrak{R}_M.
\end{cases}$$
We claim that $\Phi$ is well defined, namely,
for any given $f\in\cf$ and any $x\in {\mathfrak R}_M$,
$\Phi(f{\mathbf 1}_{{\mathfrak R}_M})(x)<\fz$.
Indeed, from Lemma \ref{LeMI}, $f\in\cf$, and \eqref{f-f}, it follows that
\begin{align*}
\lf\|\int_{{\mathfrak R}_{i_\ez}}|f(\cdot+\xi)|\,d\xi\r\|_{X}
&\le\lf\|\int_{{\mathfrak R}_{i_\ez}}|f(\cdot+\xi)-f(\cdot)|\,d\xi\r\|_{X}
+|{\mathfrak R}_{i_\ez}|\lf\|f\r\|_X\\
&\le|{\mathfrak R}_{i_\ez}| \sup_{\xi\in {\mathfrak R}_{i_\ez}}\lf\|f(\cdot+\xi)-f(\cdot)\r\|_{X}
+|{\mathfrak R}_{i_\ez}|\lf\|f\r\|_X<\fz.
\end{align*}
By this, we know that, for any given cube $Q_x\subset {\mathfrak R}_M$,
there exists a point $x_0\in Q_x$ such that
$\int_{{\mathfrak R}_{i_\ez}}|f(x_0+\xi)|\,d\xi<\fz$ which,
combined with the observation $Q_x\subset x_0+{\mathfrak R}_{i_\ez}$,
further implies that
$$\int_{Q_x}|f(\xi)|\,d\xi
\le\int_{x_0+{\mathfrak R}_{i_\ez}}|f(\xi)|\,d\xi
=\int_{{\mathfrak R}_{i_\ez}}|f(x_0+\xi)|\,d\xi<\fz.$$
This shows that the above claim holds true.

Now, we estimate $\|f{\mathbf 1}_{{\mathfrak R}_M}-\Phi(f{\mathbf 1}_{{\mathfrak R}_M})\|_X$
for any given $f\in \cf$.
To this end, notice that, for any $x\in {\mathfrak R}_M$,
\begin{align*}
\lf|[f(x)-f_{Q_x}]{\mathbf 1}_{Q_x}(x) \r|
&= \lf|\frac1{|Q_x|}\int_{Q_x}[f(x)-f(y)]\,dy {\mathbf 1}_{Q_x}(x)  \r| \\
&\le \frac1{|Q_x|}\int_{Q_x}\lf|f(x)-f(y) \r|\,dy {\mathbf 1}_{Q_x}(x) \\
&\le \frac1{|Q_x|}\int_{{\mathfrak R}_{i_\ez}}\lf|f(x)-f(x+\xi) \r|\,d\xi {\mathbf 1}_{Q_x}(x).
\end{align*}
From this, we deduce that, for any $x\in {\mathfrak R}_M$,
\begin{align*}
\lf|\sum_{Q_x\subset {\mathfrak R}_M}[f(x)-f_{Q_x}]{\mathbf 1}_{Q_x}(x) \r|
&\le 2^{-n i_\ez}\int_{{\mathfrak R}_{i_\ez}}\lf|f(x)-f(x+\xi) \r|\,d\xi {\mathbf 1}_{{\mathfrak R}_M}(x)
\end{align*}
and hence, by Lemma \ref{LeMI} and \eqref{f-f}, we have
\begin{align}\label{ez/3}
\lf\|f{\mathbf 1}_{{\mathfrak R}_M}-\Phi(f{\mathbf 1}_{{\mathfrak R}_M}) \r\|_X
&=\lf\|\sum_{Q_x\subset {\mathfrak R}_M}[f(\cdot)-f_{Q_x}]{\mathbf 1}_{Q_x}(\cdot) \r\|_X \\
&\le 2^{-n i_\ez} \lf\|\int_{{\mathfrak R}_{i_\ez}}\lf|f(\cdot)-f(\cdot+\xi) \r|\,d\xi
 {\mathbf 1}_{{\mathfrak R}_M}(\cdot) \r\|_X\noz\\
&\le 2^{-n i_\ez} |{\mathfrak R}_{i_\ez}|\sup_{\xi\in {\mathfrak R}_{i_\ez}}\lf\|f(\cdot)-f(\cdot+\xi) \r\|_X \noz\\
&=2^n\sup_{\xi\in {\mathfrak R}_{i_\ez}}\lf\|f(\cdot)-f(\cdot+\xi) \r\|_X
<\ez/3.\noz
\end{align}
This is the desired estimate.

In addition, it is easy to see that $\{\Phi(f{\mathbf 1}_{{\mathfrak R}_M})\}_{f\in\cf}$
is a bounded subset of a finite dimensional Banach space,
which implies that $\{\Phi(f{\mathbf 1}_{{\mathfrak R}_M})\}_{f\in\cf}$ has a finite $\ez/3$-net.
This, together with \eqref{ez/3},
shows that there exists a finite $2\ez/3$-net of $\{f{\mathbf 1}_{{\mathfrak R}_M}\}_{f\in\cf}$,
which completes the proof of the first part of this theorem.

Now, we show the second part of this theorem.
Let $\cf$ be a totally bounded set of $X$.
Then, by the definition of totally bounded sets, we easily know
that (i) holds true.

Next, for any given $\ez\in(0,\fz)$,
let $\{U_1,\dots,U_m\}$ be a finite $\ez/3$-net of $\cf$,
and choose $g_j\in U_j$ for any $j\in\{1,\dots,m\}$.
By the additional assumption that $C_{\rm c}(\rn)$ is dense in $X$,
we may assume that $g_j$ also belongs to $C_{\rm c}(\rn)$
for any $j\in\{1,\dots,m\}$,
which implies that there exists a positive constant $M$ such that,
for any $j\in\{1,\dots,m\}$,
$$\lf\|g_j\mathbf1_{\{x\in\rn:\ |x|>M\}}\r\|_X=0.$$
Thus, for any given $j\in\{1,\dots,m\}$, if $f\in U_j$,
then $\|f-g_j\|_X<\ez/3$ and hence
\begin{align*}
\lf\|f{\mathbf 1}_{\{x\in\rn:\ |x|>M\}}\r\|_{X}
&\le\lf\|(f-g_j){\mathbf 1}_{\{x\in\rn:\ |x|>M\}}\r\|_{X}
+\lf\|g_j{\mathbf 1}_{\{x\in\rn:\ |x|>M\}}\r\|_{X}\\
&\le\lf\|f-g_j\r\|_{X}
+\lf\|g_j{\mathbf 1}_{\{x\in\rn:\ |x|>M\}}\r\|_{X}
<\ez/3<\ez.
\end{align*}
This shows that (ii) holds true.

Finally, for any given $j\in\{1,\dots,m\}$, by $g_j\in C_{\rm c}(\rn)$,
we conclude that there exists a positive constant $\rho$ such that,
for any $\xi\in\rn$ with $|\xi|<\rho$,
\begin{align}\label{g-g}
\lf\|g_j(\cdot+\xi)-g_j(\cdot)\r\|_{X}<\ez/3.
\end{align}
Moreover, for any given $f\in\cf$,
there exists a $g_j\in U_j\cap C_{\rm c}(\rn)$ with some $j\in\{1,\dots,m\}$
such that $\|f-g_j\|_X<\ez$,
which, combined with \eqref{+y} and \eqref{g-g}, further implies that,
for any $\xi\in\rn$ with $|\xi|<\rho$,
\begin{align*}
\lf\|f(\cdot+\xi)-f(\cdot)\r\|_{X}
\le\lf\|f(\cdot+\xi)-g_j(\cdot+\xi)\r\|_{X}
+\lf\|g_j(\cdot+\xi)-g_j(\cdot)\r\|_{X}
+\lf\|g_j-f\r\|_{X}
<\ez.
\end{align*}
This shows that (iii) holds true, which completes the proof of
the second part of this theorem and hence of Theorem \ref{l-fre}.
\end{proof}

\begin{remark}\label{rem-+y}
\begin{itemize}
\item [{\rm(i)}] In the second part of Theorem \ref{l-fre}, the
additional assumption \eqref{+y} is reasonable because,
even when $X$ is the weighted Lebesgue space, if $f\in X$,
then $f(\cdot+\xi)$ may not be in $X$ even when $|\xi|$ is small.

\item [{\rm(ii)}]
If $X$ has an absolutely continuous quasi-norm,
then $C_{\rm c}(\rn)$ is dense in $X$ [see Proposition \ref{2fre} below].
Recall that a ball quasi-Banach function space $X$ is said to have an
\emph{absolutely continuous quasi-norm} if, for any $f\in X$
and any sequence of measurable sets, $\{E_j\}_{j\in\nn}\subset \rn$,
satisfying that $\mathbf{1}_{E_j}\to 0$ almost everywhere as $j\to\fz$,
$\|f\mathbf{1}_{E_j}\|_X\to 0$ as $j\to\fz$.
\end{itemize}
\end{remark}

\begin{proposition}\label{2fre}
Let $X$ be a ball quasi-Banach function space
having an absolutely continuous quasi-norm. Then
$C_{\mathrm c}(\rn)$ is dense in $X$.
\end{proposition}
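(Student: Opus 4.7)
The plan is a standard two-step reduction: first replace $f\in X$ by a bounded function with compact support, then approximate such a function by one in $C_{\rm c}(\rn)$. Absolute continuity of the quasi-norm is the key ingredient in both steps, and Definition \ref{Debqfs} is used to stay inside $X$.

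For the first step, given $f\in X$, define
$$
E_j:=\lf\{x\in\rn:\ |x|>j\r\}\cup\lf\{x\in\rn:\ |f(x)|>j\r\}
$$
for each $j\in\nn$, and set $f_j:=f\mathbf{1}_{\rn\setminus E_j}$. Then $f_j$ is bounded by $j$ and supported in $B(\vec 0_n,j)$, and belongs to $X$ by Definition \ref{Debqfs}(ii). Since $f$ is finite almost everywhere, $\mathbf{1}_{E_j}\to 0$ almost everywhere as $j\to\fz$, so the absolute continuity of the quasi-norm yields $\|f-f_j\|_X=\|f\mathbf{1}_{E_j}\|_X\to 0$.

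For the second step, it suffices to approximate a bounded function $g\in X$ with $|g|\le M$ and $\supp g\subset B$ for some ball $B$. By Lusin's theorem, for each $k\in\nn$, I can choose $h_k\in C_{\rm c}(\rn)$ with $\supp h_k\subset 2B$, $\|h_k\|_{L^\fz(\rn)}\le M$, and $|F_k|<2^{-k}$, where $F_k:=\{x\in\rn:\ g(x)\ne h_k(x)\}$. Since $\sum_{k\in\nn}|F_k|<\fz$, the Borel--Cantelli lemma gives $|\limsup_{k\to\fz}F_k|=0$, which in turn implies $\mathbf{1}_{F_k}\to 0$ almost everywhere. Noting that $\mathbf{1}_{2B}\in X$ by Definition \ref{Debqfs}(iv) and applying the absolute continuity of the quasi-norm with the test function $\mathbf{1}_{2B}$, I obtain $\|\mathbf{1}_{F_k}\|_X=\|\mathbf{1}_{2B}\mathbf{1}_{F_k}\|_X\to 0$. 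Since $|g-h_k|\le 2M\mathbf{1}_{F_k}$ pointwise, Definition \ref{Debqfs}(ii) and the homogeneity of the quasi-norm yield $\|g-h_k\|_X\le 2M\|\mathbf{1}_{F_k}\|_X\to 0$.

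Combining the two steps via the quasi-triangle inequality for $\|\cdot\|_X$ finishes the proof. The only mild subtlety is that the plain version of Lusin's theorem only guarantees $|F_k|\to 0$, which by itself does not give almost everywhere convergence of $\mathbf{1}_{F_k}$; this is why I enforce the summable decay $|F_k|<2^{-k}$ so that Borel--Cantelli upgrades measure convergence to almost everywhere convergence, the precise hypothesis needed to invoke absolute continuity of the quasi-norm.
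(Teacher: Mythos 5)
Your argument is correct, but it follows a different route from the paper's. The paper approximates a nonnegative $f\in X$ by simple functions (increasing pointwise limits, combined with the dominated-convergence reformulation of absolute continuity from \cite[p.\,16, Proposition 3.6]{BS}), then replaces the measurable sets in the simple function by compact sets from inside and bounded open sets from outside via the inner and outer regularity of Lebesgue measure, and finally produces the continuous compactly supported approximant by the Urysohn lemma, squeezing it between the two simple functions; you instead truncate $f$ in height and in support, which uses the absolutely continuous quasi-norm exactly in the form stated in Remark \ref{rem-+y}(ii), and then approximate the resulting bounded, compactly supported function by Lusin's theorem, where the summable bound $|F_k|<2^{-k}$ together with Borel--Cantelli upgrades smallness in measure to $\mathbf{1}_{F_k}\to0$ almost everywhere, so that absolute continuity applied to the single majorant $\mathbf{1}_{2B}\in X$ (available by Definition \ref{Debqfs}(iv)) yields $\|\mathbf{1}_{F_k}\|_X\to0$. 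What your route buys is that it invokes only the paper's stated definition of absolute continuity and lets Lusin's theorem package the measure-regularity-plus-Urysohn step that the paper carries out by hand; what the paper's route buys is that it stays entirely within the lattice structure of $X$ and simple functions, never needing Lusin or Borel--Cantelli. Two points you should keep explicit in a polished write-up: the requirement $\supp h_k\subset 2B$ (needed so that $F_k\subset 2B$ and the fixed majorant $\mathbf{1}_{2B}$ applies) is justified either by the standard construction in Lusin's theorem or by multiplying $h_k$ with a continuous cutoff equal to $1$ on $B$ and supported in $2B$, which does not enlarge $\{g\ne h_k\}$ since $g=0$ off $B$; and the final combination of the two steps uses the quasi-triangle inequality of $\|\cdot\|_X$, which only affects constants.
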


\begin{proof}
Without loss of generality, we may
let $f\in X$ be a non-negative measurable function on $\rn$.
Then there exists an increasing sequence of non-negative simple functions
$\{f_j\}_{j\in\nn}$
that converges pointwise to $f$.
From this and \cite[p.\,16, Proposition 3.6]{BS}, it follows that,
for any given $\varepsilon\in(0,\infty)$,
there exists a simple function $g:=\sum_{k=1}^{N}\lambda_k\mathbf{1}_{E_k}\in \{f_j\}_{j\in\nn}$
such that $\|f-g\|_{X}<\varepsilon$,
where, for any $k\in\{1,\ldots,N\}$,
$E_k$ is a measurable set and $\lambda_k$ is a positive
constant.
Further, by
the inner regularity of the Lebesgue measure and \cite[p.\,16, Proposition 3.6]{BS},
we know that there exists a simple function $h:=\sum_{k=1}^{N}\lambda_k\mathbf{1}_{F_k}$
such that $\|g-h\|_{X}<\varepsilon$,
where, for any $k\in\{1,\ldots,N\}$,
$F_k\subset E_k$ is a compact set, which,
together with the outer regularity of the Lebesgue measure
and \cite[p.\,16, Proposition 3.6]{BS}, further implies that
there exists a simple function $u:=\sum_{k=1}^{N}\lambda_k\mathbf{1}_{U_k}$
such that $\|u-h\|_{X}<\varepsilon$,
where, for any $k\in\{1,\ldots,N\}$,
$F_k\subset U_k$ is a bounded open set.
Then, using the Urysohn lemma, we obtain $f_0\in C_{\mathrm c}(\rn)$
satisfying that $0\le f_0-h\le u-h$ and hence $\|f_0-h\|_{X}<\|u-h\|_{X}<\varepsilon$.
By the above estimates, we conclude that
$\|f-f_0\|_{X}<\varepsilon$. This finishes the proof of Proposition \ref{2fre}.
\end{proof}

Let $\Omega\in L^\infty(\mathbb{S}^{n-1})$ satisfy \eqref{deg0},
\eqref{mean0}, and $L^\infty$-Dini condition,
and $T_\Omega$ be
a singular integral operator with homogeneous kernel $\Omega$.
To show Theorem \ref{thm-cpt},
we first establish the boundedness of the maximal operator $T_\Omega^\ast$
of a family of truncated transforms
$\{T_{\Omega,\eta}\}_{\eta\in(0,\infty)}$ defined as follows.
For any given $\eta\in(0,\infty)$ and for any $f\in X$
and $x\in\rn$, let
$$T_{\Omega,\eta} f(x):=
\int_{\{y\in\rn:\ |x-y|>\eta\}}
\frac{\Omega(x-y)}{|x-y|^n}
f(y)\,dy.$$
The\emph{ maximal operator $T_\Omega^\ast$} is
defined by setting, for any $f\in X$ and $x\in\rn$,
\begin{equation}\label{ssd}
T_\Omega^\ast f(x):=\sup_{\eta\in(0,\infty)}\lf|T_{\Omega,\eta} f(x)\r|
=\sup_{\eta\in(0,\infty)}
\lf|\int_{\{y\in\rn:\ |x-y|>\eta\}}\frac{\Omega(x-y)}{|x-y|^n}
f(y)\,dy\r|.
\end{equation}
We point out that Proposition \ref{pp6} below
ensures that, for any $f\in X$, $T_\Omega^\ast f$ in \eqref{ssd}
is well defined.

Recall that the following weighted
$L^p(\rn)$ boundedness of the maximal operator $T_\Omega^\ast$ is a part of
\cite[Theorem 2.1.8]{ldy}.

\begin{lemma}\label{coa5}
Let $\omega\in A_1(\rn)$, $p\in(1,\infty)$, and
$\Omega\in L^\infty(\mathbb{S}^{n-1})$ satisfy \eqref{deg0}, \eqref{mean0},
and the $L^\infty$-Dini condition. Assume that $T_\Omega^\ast$ is
a singular integral operator with homogeneous kernel $\Omega$.
Then there exists
a positive constant $C_{(p,\Omega,[\oz]_{A_1(\rn)})}$ such that,
for any $f\in L^p_\oz(\rn)$,
$$
\int_\rn [T_\Omega^\ast(f)(x)]^p\omega(x)\,dx\leq
C_{(p,\Omega,[\oz]_{A_1(\rn)})}\int_\rn\lf|f(x)\r|^p\omega(x)\,dx.
$$
\end{lemma}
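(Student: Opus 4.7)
The plan is to reduce the maximal truncation $T_\Omega^\ast$ to the (non-truncated) operator $T_\Omega$ and the Hardy--Littlewood maximal operator $\cm$ via a Cotlar-type pointwise inequality, and then invoke the known weighted $L^p(\rn)$ bounds for both $T_\Omega$ and $\cm$ under the $A_1(\rn)$ assumption. Since Lemma \ref{coa5} is cited from \cite{ldy}, the argument I sketch follows the classical textbook approach for $L^\infty$-Dini kernels.

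The first and main step is to establish, for any $f\in L^\infty(\rn)$ with compact support (the general case follows by a limiting argument combined with Lemma \ref{embed1}-type density) and any $x\in\rn$, the pointwise Cotlar-type inequality
$$T_\Omega^\ast f(x)\ls \cm(T_\Omega f)(x)+\cm f(x),$$
where the implicit constant depends only on $n$, $\|\Omega\|_{L^\fz(\mathbb{S}^{n-1})}$, and the Dini integral in \eqref{Dini}. To this end, fix $\eta\in(0,\fz)$ and $x\in\rn$, put $B:=B(x,\eta)$, and split $f=f\mathbf{1}_{2B}+f\mathbf{1}_{(2B)^\com}$. For any $y\in\frac12 B$, since $|z-x|>\eta$ forces $|z-y|>\eta/2$, the standard kernel difference estimate
$$\lf|\frac{\Omega(x-z)}{|x-z|^n}-\frac{\Omega(y-z)}{|y-z|^n}\r|\ls \frac{1}{|x-z|^n}\lf[\frac{|x-y|}{|x-z|}+\omega_\fz\lf(\frac{|x-y|}{|x-z|}\r)\r]$$
holds, valid because $\Omega$ is homogeneous of degree zero and satisfies the $L^\fz$-Dini condition. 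Integrating over the annuli $\{2^k\eta<|x-z|\le 2^{k+1}\eta\}$ for $k\in\zz_+$, summing in $k$, and invoking \eqref{Dini} then yield
$$\lf|T_{\Omega,\eta}(f\mathbf{1}_{(2B)^\com})(x)-T_\Omega(f\mathbf{1}_{(2B)^\com})(y)\r|\ls \cm f(x).$$
Meanwhile, $|T_{\Omega,\eta}(f\mathbf{1}_{2B})(x)|$ is itself dominated by $\cm f(x)$ through a routine dyadic layer decomposition of $\{z:\ \eta<|x-z|\le 2\eta\}$. Averaging the resulting bound over $y\in\frac12 B$ and then taking the supremum over $\eta\in(0,\fz)$ produces the Cotlar-type inequality above.

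Given this pointwise inequality, the conclusion follows by applying the weighted $L^p(\rn)$ norm to both sides. Since $\oz\in A_1(\rn)\subset A_p(\rn)$ for any $p\in(1,\fz)$, the Muckenhoupt theorem gives
$$\|\cm f\|_{L^p_\oz(\rn)}\ls\|f\|_{L^p_\oz(\rn)}\quad\text{and}\quad\|\cm(T_\Omega f)\|_{L^p_\oz(\rn)}\ls \|T_\Omega f\|_{L^p_\oz(\rn)}.$$
Finally, the weighted $L^p(\rn)$ boundedness of $T_\Omega$ itself, valid under the $L^\fz$-Dini condition for $A_p(\rn)$ weights (the classical Duoandikoetxea--Rubio de Francia theorem), yields $\|T_\Omega f\|_{L^p_\oz(\rn)}\ls\|f\|_{L^p_\oz(\rn)}$, with constant depending on $p$, $\Omega$, and $[\oz]_{A_1(\rn)}$. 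Combining the three bounds finishes the proof.

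The main obstacle is the Cotlar-type inequality, specifically controlling the error term $|T_{\Omega,\eta}(f\mathbf{1}_{(2B)^\com})(x)-T_\Omega(f\mathbf{1}_{(2B)^\com})(y)|$ for $y\in\frac12 B$. Away from this point everything reduces either to the Muckenhoupt theory or to a trivial layer-cake estimate; the delicate step is converting the modulus of continuity $\omega_\fz$ of $\Omega$, weighted by the geometric factor $|x-y|/|x-z|$ on each dyadic annulus $\{2^k\eta<|x-z|\le 2^{k+1}\eta\}$, into a telescoping series that sums to the finite Dini integral \eqref{Dini}. Once this geometric estimate is in hand, the rest of the argument is purely a matter of invoking standard weighted theory.
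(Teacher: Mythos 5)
The paper itself offers no proof of this lemma: it is quoted verbatim as a part of \cite[Theorem 2.1.8]{ldy}, so there is no internal argument to compare against, and your sketch follows the standard route (a Cotlar-type inequality plus Muckenhoupt theory) that underlies results of this kind. However, as written, your key step has a genuine gap. Your decomposition gives, for every $y\in\frac12B$, the bound $|T_{\Omega,\eta}f(x)|\le C\cm f(x)+|T_\Omega(f\mathbf{1}_{(2B)^\complement})(y)|$; to reach $\cm(T_\Omega f)(x)$ you must write $T_\Omega(f\mathbf{1}_{(2B)^\complement})(y)=T_\Omega f(y)-T_\Omega(f\mathbf{1}_{2B})(y)$ and average in $y$. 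The average of $|T_\Omega f(y)|$ indeed produces $\cm(T_\Omega f)(x)$, but the average of $|T_\Omega(f\mathbf{1}_{2B})(y)|$ over $y\in\frac12B$ is \emph{not} controlled by $\cm f(x)$: $T_\Omega$ is not bounded on $L^1(\rn)$, so no $L^1$-average bound for this local piece is available, and using Cauchy--Schwarz with the $L^2(\rn)$ bound only yields $(\cm(|f|^2)(x))^{1/2}$, whose weighted $L^p$ boundedness requires $\oz\in A_{p/2}(\rn)$ and therefore fails for $1<p\le2$. Your sentence ``averaging the resulting bound over $y\in\frac12B$ \dots produces the Cotlar-type inequality'' silently drops this term, and that term is exactly the delicate point of Cotlar's inequality.

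The standard repair is to invoke the weak $(1,1)$ boundedness of $T_\Omega$ (valid under the $L^\infty$-Dini condition by Calder\'on--Zygmund theory, and itself in need of a citation or proof in your write-up) together with Kolmogorov's inequality, which controls the $L^\delta$-average, $\delta\in(0,1)$, of $T_\Omega(f\mathbf{1}_{2B})$ on $\frac12B$ by $\cm f(x)$; this gives the modified pointwise inequality $T_\Omega^\ast f(x)\ls (\cm(|T_\Omega f|^\delta)(x))^{1/\delta}+\cm f(x)$. This weaker version still yields the lemma: since $\delta<1$ one has $p/\delta>1$ and $\oz\in A_1(\rn)\subset A_{p/\delta}(\rn)$, so $g\mapsto(\cm(|g|^\delta))^{1/\delta}$ is bounded on $L^p_\oz(\rn)$ for every $p\in(1,\fz)$, and combining this with the weighted boundedness of $T_\Omega$ (which you correctly quote) finishes the proof. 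So your overall strategy is sound, but the pointwise inequality in the form you state it is not established by the argument you give, and the missing ingredient (weak $(1,1)$ plus Kolmogorov, or an equivalent good-$\lambda$ argument) is essential rather than routine.
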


As an immediate consequence of Lemma \ref{coa5}, we have the following conclusion.

\begin{proposition}\label{pp6}
Let $X$ be a ball quasi-Banach function space satisfying Assumption \ref{assum}(ii),
$\Omega\in L^\infty(\mathbb{S}^{n-1})$ satisfy \eqref{deg0}, \eqref{mean0}, and \eqref{Dini},
and $T_\Omega^\ast$ be the maximal operator as in \eqref{ssd}.
Then there exists a positive constant $C$ such that,
for any $f\in X$,
\begin{equation}\label{Eqcc2}
\lf\|T_\Omega^\ast(f)\r\|_X\le C\lf\|f\r\|_X,
\end{equation}
and, for any $f\in X$ and almost every $x\in\rn$,
\begin{equation}\label{Eqcc3}
T_\Omega(f)(x)=\lim_{\varepsilon\to0^+}\int_{\varepsilon<|x-y|<1/\varepsilon}\frac{\Omega(x-y)}{|x-y|^n}f(y)\,dy.
\end{equation}
\end{proposition}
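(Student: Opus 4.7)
\textbf{Proof proposal for Proposition \ref{pp6}.}
The plan is to obtain both halves directly from items already at hand in the excerpt: the weighted $L^p$-boundedness of $T_\Omega^\ast$ (Lemma \ref{coa5}), the extrapolation transfer (Proposition \ref{thm-bdd-0}), and the embedding of $X$ into a weighted Lebesgue space (Lemma \ref{embed1}). No genuinely new estimate is required; the work is essentially one of assembly.

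For the norm inequality \eqref{Eqcc2}, Assumption \ref{assum}(ii) provides some $s\in(1,\fz)$ such that $X^{1/s}$ is a ball Banach function space and $\cm$ is bounded on $(X^{1/s})'$. I would apply Proposition \ref{thm-bdd-0} to $\mathcal{T}:=T_\Omega^\ast$. The hypothesis of that proposition --- namely, that for every $\omega\in A_1(\rn)$,
$$
\lf\|T_\Omega^\ast(f)\r\|_{L^s_\omega(\rn)}\le C_{(s,[\omega]_{A_1(\rn)})}\lf\|f\r\|_{L^s_\omega(\rn)}
$$
with the constant depending only on $s$ and $[\omega]_{A_1(\rn)}$ --- is exactly the case $p=s$ of Lemma \ref{coa5}, applicable here because $\Omega$ satisfies \eqref{deg0}, \eqref{mean0}, and the $L^\fz$-Dini condition \eqref{Dini}. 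Proposition \ref{thm-bdd-0} then yields \eqref{Eqcc2} at once.

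For the pointwise identity \eqref{Eqcc3}, let $\omega:=[\cm(\mathbf{1}_{B(\vec0_n,1)})]^\epsilon$ with $\epsilon\in(0,1)$ given by Lemma \ref{embed1}; then $\omega\in A_1(\rn)$ and $X$ continuously embeds into $L^s_\omega(\rn)$, so it suffices to verify \eqref{Eqcc3} for every $f\in L^s_\omega(\rn)$. For any $\varepsilon\in(0,1)$ and $x\in\rn$ write
$$
\int_{\varepsilon<|x-y|<1/\varepsilon}\frac{\Omega(x-y)}{|x-y|^n}f(y)\,dy
=T_{\Omega,\varepsilon}f(x)-T_{\Omega,1/\varepsilon}f(x),
$$
which is pointwise dominated by $2T_\Omega^\ast f(x)$. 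For any $g\in C^\fz_{\rm c}(\rn)$ the limit as $\varepsilon\to 0^+$ of the left-hand side exists at every $x\in\rn$: the first term on the right converges to $T_\Omega(g)(x)$ by the classical principal-value theory for $L^\fz$-Dini kernels, while the second tends to $0$ because $g$ has compact support. For a general $f\in L^s_\omega(\rn)$, approximating $f$ in $L^s_\omega(\rn)$-norm by $g\in C^\fz_{\rm c}(\rn)$ (density being standard for $A_\fz$ weights) and applying the pointwise bound above to $f-g$ shows that the oscillation between the limsup and the liminf as $\varepsilon\to 0^+$ is dominated by $4T_\Omega^\ast(f-g)$. A Chebyshev-type argument combined with the weighted strong-type bound of Lemma \ref{coa5} then forces this oscillation to vanish almost everywhere, yielding \eqref{Eqcc3}.

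I do not foresee any substantive obstacle: every ingredient is already recorded in the excerpt. The only mildly delicate point is the simultaneous presence of the inner and outer truncations in the principal value, which is handled by the decomposition $T_{\Omega,\varepsilon}-T_{\Omega,1/\varepsilon}$ together with the uniform pointwise control by $T_\Omega^\ast$ coming from \eqref{Eqcc2}.
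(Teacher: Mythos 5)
Your proposal is correct and follows essentially the same route as the paper: \eqref{Eqcc2} is obtained exactly as in the paper by feeding Lemma \ref{coa5} (with $p=s$) into Proposition \ref{thm-bdd-0}, and \eqref{Eqcc3} is reduced via Lemma \ref{embed1} to the weighted space $L^s_\omega(\rn)$. The only difference is that where the paper simply cites \cite[Theorem 2.2]{d} for the almost-everywhere existence of the principal value on $L^s_\omega(\rn)$, you reproduce that standard argument (density of $C^\fz_{\rm c}(\rn)$, control of the oscillation of truncations by $T_\Omega^\ast$, and a Chebyshev argument using Lemma \ref{coa5}), which is a faithful expansion of the cited result rather than a new approach.
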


\begin{proof}
Using Lemma \ref{coa5} and Proposition \ref{thm-bdd-0},
we immediately obtain \eqref{Eqcc2}.
Moreover, from Lemma \ref{embed1}, we deduce that
$X\subset L_\omega^s(\rn)$.
By
Lemma \ref{coa5} and \cite[Theorem 2.2]{d}, we know that,
for any $f\in L_\omega^s(\rn)$ and almost every $x\in\rn$,
\begin{equation*}
T_\Omega(f)(x)=\lim_{\varepsilon\to0^+}
\int_{\varepsilon<|x-y|<1/\varepsilon}\frac{\Omega(x-y)}{|x-y|^n}f(y)\,dy
\end{equation*}
and hence \eqref{Eqcc3} holds true.
This finishes the proof of Proposition \ref{pp6}.
\end{proof}

Next, we recall the following smooth truncated technique same
as in \cite{CC13} (see also \cite{KL01}).
Let $\varphi\in C^\fz([0,\fz))$ satisfy
$$0\le\varphi\le1{\quad \rm and\quad }
\varphi(x)=
\begin{cases}
1, &x\in[0,1/2],\\
0, &x\in[1,\fz].
\end{cases}$$
Let
$\Omega\in L^\infty(\mathbb{S}^{n-1})$ satisfy \eqref{deg0}, \eqref{mean0}, and
the $L^\infty$-Dini condition.
For any $\kappa\in(0,\fz)$ and any $x,\ y\in\rn$, define
$K_\kappa(x,y):=\frac{\Omega(x-y)}{|x-y|^n}[1-\varphi(\frac{|x-y|}{\kappa})]$.
Let $X$ be a ball quasi-Banach function space.
Assume that there exists an $s\in(0,\infty)$ such that $X^{1/s}$ is a ball Banach function space
and $\cm$ bounded on $(X^{1/s})'$.
By Lemma \ref{embed1} and \cite[Lemma 7.4.5]{G1}, we know that,
for any $f\in X$, $\kappa\in(0,\fz)$, and $x\in\rn$,
\begin{align*}
T_\Omega^{(\kappa)}f(x):=\int_{\rn}K_\kappa(x,y)f(y)\,dy<\infty.
\end{align*}

\begin{remark}\label{Keta-standard}
Let $\Omega\in L^\infty(\mathbb{S}^{n-1})$ satisfy \eqref{deg0},
\eqref{mean0}, and \eqref{Dini}.
Then, for any given $\kappa\in(0,\fz)$,
$K_\kappa$ satisfies the following smoothness condition:
there exists a positive constant $C$ such that, for any
$x,\ y,\ \xi\in\rn$ with $|\xi|\le|x-y|/2$,
\begin{align}\label{Kk}
\lf|K_\kappa(x,y)-K_\kappa(x+\xi,y) \r|
\le C\lf[\frac{1}{|x-y|^{n}}\omega_\infty\lf(\frac{4|\xi|}{|x-y|}\r)+
\frac{|\xi|}{|x-y|^{n+1}}\r].
\end{align}
Indeed, by \eqref{deg0}, we conclude that,
for any $x,\ y,\ \xi\in\rn$ with $|\xi|\le|x-y|/2$,
\begin{align*}
\lf|\Omega(x-y)
-\Omega(x+\xi-y)\r|=\lf|\Omega\lf(\frac{x-y}{|x-y|}\r)
-\Omega\lf(\frac{x+\xi-y}{|x+\xi-y|}\r)\r|
\le\omega_\infty\lf(\frac{4|\xi|}{|x-y|}\r)
\end{align*}
From this, $\Omega\in L^\fz(\rn)$, the mean value theorem, and
the definition of $\varphi$, it follows that,
for any $x,\ y,\ \xi\in\rn$ with $|\xi|\le|x-y|/2$,
\begin{align*}
&\lf|K_\kappa(x,y)-K_\kappa(x+\xi,y) \r|\\
&\quad\le\lf|\frac{\Omega(x-y)}{|x-y|^n}
-\frac{\Omega(x+\xi-y)}{|x+\xi-y|^n}\r|\lf|1-\varphi\lf(\frac{|x-y|}{\kappa}\r) \r|
+\lf|\frac{\Omega(x+\xi-y)}{|x+\xi-y|^n}\r|\\
&\quad\quad\times
\lf|\varphi\lf(\frac{|x+\xi-y|}{\kappa}\r)-\varphi\lf(\frac{|x-y|}{\kappa}\r) \r|\\
&\quad\ls\lf|\frac{\Omega(x-y)
-\Omega(x+\xi-y)}{|x-y|^n}\r|
+\lf|\frac{1}{|x+\xi-y|^n}-\frac{1}{|x-y|^n}\r|\\
&\quad\quad+\frac{\|\varphi'\|_{L^\fz(\rr_+)}}{|x+\xi-y|^n}
\lf|\frac{|x+\xi-y|}{\kappa}-\frac{|x-y|}{\kappa} \r|
\mathbf1_{\{(x,y)\in\rn\times\rn:\ \frac13\kappa\le|x-y|\le2\kappa\}}(x,y)\\
&\quad\ls\frac{1}{|x-y|^{n}}\omega_\infty\lf(\frac{4|\xi|}{|x-y|}\r)
+\frac{|\xi|}{|x-y|^{n+1}}+\frac1\kappa\frac{|\xi|}{|x-y|^{n}}
\mathbf1_{\{(x,y)\in\rn\times\rn:\ \frac13\kappa\le|x-y|\le2\kappa\}}(x,y)\\
&\quad\ls\frac{1}{|x-y|^{n}}\omega_\infty\lf(\frac{4|\xi|}{|x-y|}\r)+
\frac{|\xi|}{|x-y|^{n+1}},
\end{align*}
where the implicit positive constants are independent of $\kappa$, $x$, $y$, and $\xi$.
\end{remark}

\begin{lemma}\label{Teta}
Let $b\in C_{\rm c}^\fz(\rn)$,
$X$ be a ball quasi-Banach function space satisfying Assumption \ref{assum}(ii),
and $\Omega\in L^\infty(\mathbb{S}^{n-1})$ satisfy \eqref{deg0}, \eqref{mean0}, and
\eqref{Dini}.
Then there exists a positive constant $C$ such that,
for any $\kappa\in(0,\fz)$, $f\in X$, and $x\in X$,
$$\lf|[b,T_\Omega](f)(x)- [b,T_\Omega^{(\kappa)}](f)(x)\r|
\le C \kappa \|\nabla b\|_{L^\fz(\rn)}\cm f(x).$$
Moreover, if $\cm$ is bounded on $X$, then
$$\lim_{\kappa\to0^+}\lf\|[b,T_\Omega]- [b,T_\Omega^{(\kappa)}]\r\|_{X\to X}=0.$$
\end{lemma}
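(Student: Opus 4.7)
The plan is to establish the pointwise estimate directly via the integral representation of the commutator, and then to pass to the operator norm bound by invoking the $X$-boundedness of $\cm$. The key observation is that, since $b\in C_{\mathrm c}^\fz(\rn)$, the mean value theorem gives $|b(x)-b(y)|\le\|\nabla b\|_{L^\fz(\rn)}|x-y|$, which tames the singularity of the Calder\'on--Zygmund kernel near $y=x$ so that no principal value is needed inside the commutator.

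First I would note that the kernel of $T_\Omega-T_\Omega^{(\kappa)}$ equals $\frac{\Omega(x-y)}{|x-y|^n}\varphi(\frac{|x-y|}{\kappa})$, which is supported in $\{|x-y|\le\kappa\}$. Using the embedding $X\hookrightarrow L_\omega^s(\rn)$ from Lemma \ref{embed1} to justify the pointwise manipulations (both $f$ and $bf$ lie in $L^s_\omega(\rn)$, since $b$ is bounded with compact support), one obtains, for almost every $x\in\rn$,
$$[b,T_\Omega](f)(x)-[b,T_\Omega^{(\kappa)}](f)(x)=\int_{|x-y|\le\kappa}[b(x)-b(y)]\frac{\Omega(x-y)}{|x-y|^n}\varphi\lf(\frac{|x-y|}{\kappa}\r)f(y)\,dy,$$
where the integral converges absolutely because its integrand is dominated by $\|\nabla b\|_{L^\fz(\rn)}\|\Omega\|_{L^\fz(\mathbb{S}^{n-1})}|f(y)|/|x-y|^{n-1}$, which is locally integrable in $y$. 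Taking absolute values and splitting $\{|x-y|\le\kappa\}$ into dyadic annuli $\{2^{-k-1}\kappa<|x-y|\le 2^{-k}\kappa\}$ with $k\in\zz_+$ then yields
$$\int_{|x-y|\le\kappa}\frac{|f(y)|}{|x-y|^{n-1}}\,dy\le \sum_{k=0}^\fz (2^{-k-1}\kappa)^{-(n-1)}\int_{B(x,2^{-k}\kappa)}|f(y)|\,dy\le C\kappa\,\cm f(x),$$
by the definition of $\cm$ and the geometric summability in $k$. Combining the two displays gives the desired pointwise inequality.

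For the second assertion, I would take the $X$-norm of both sides of the pointwise inequality and apply the assumed boundedness of $\cm$ on $X$ to conclude
$$\lf\|[b,T_\Omega](f)-[b,T_\Omega^{(\kappa)}](f)\r\|_X\le C\kappa\|\nabla b\|_{L^\fz(\rn)}\|\cm\|_{X\to X}\|f\|_X,$$
so that taking the supremum over $\|f\|_X\le 1$ and letting $\kappa\to 0^+$ completes the proof. The main technical point I expect to confirm carefully is the validity of the pointwise integral representation for arbitrary $f\in X$ (rather than merely for bounded functions of compact support); this is handled precisely by Lemma \ref{embed1} together with the absolute convergence of the integrand noted above, so the argument does not require any appeal to density or to the absolute continuity of the quasi-norm on $X$.
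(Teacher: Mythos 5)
Your proposal is correct and follows essentially the same route as the paper: the mean value theorem for $b$ together with the fact that the kernel of $T_\Omega-T_\Omega^{(\kappa)}$ is supported in $\{|x-y|\le\kappa\}$, a dyadic-annuli estimate yielding the factor $\kappa\,\cm f(x)$, and then the boundedness of $\cm$ on $X$ for the operator-norm convergence. Your extra care in justifying the absolutely convergent integral representation (via Lemma \ref{embed1} and the tamed singularity) is exactly what the paper handles through \eqref{Eqcc3} and Proposition \ref{pp6}, so there is no substantive difference.
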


\begin{proof}
Let $f\in X$. For any $x\in\rn$, by the mean value theorem and \eqref{Eqcc3}, we have
\begin{align*}
&\lf|[b,T_\Omega](f)(x)- [b,T_\Omega^{(\kappa)}](f)(x)\r|\\
&\quad=\lf|\lim_{\varepsilon\to0^+}\int_{\varepsilon<|x-y|<1/\varepsilon}
[b(x)-b(y)]\frac{\Omega(x-y)}{|x-y|^n}
\varphi\lf(\frac{|x-y|}{\kappa}\r)f(y)\,dy \r|\\
&\quad\le\int_{\{y\in\rn:\ |x-y|\le\kappa\}}|b(x)-b(y)|\frac{|\Omega(x-y)|}{|x-y|^n}|f(y)|\,dy\\
&\quad\le\|\Omega\|_{L^\fz(\rn)}\|\nabla b\|_{L^\fz(\rn)}\sum_{j=0}^\fz
 \int_{\{y\in\rn:\ \frac{\kappa}{2^{j+1}}<|x-y|\le\frac{\kappa}{2^{j}}\}}
 |x-y|\frac{|f(y)|}{|x-y|^{n}}\,dy\\
&\quad\le\|\Omega\|_{L^\fz(\rn)}\|\nabla b\|_{L^\fz(\rn)}\sum_{j=0}^\fz
 \frac{\kappa}{2^{j}}\lf(\frac{\kappa}{2^{j+1}}\r)^{-n}
 \int_{B(x,\frac{\kappa}{2^j})}|f(y)|\,dy\\
&\quad\le\kappa2^n \|\Omega\|_{L^\fz(\rn)}\|\nabla b\|_{L^\fz(\rn)}
 \sum_{j=0}^\fz2^{-j} \cm f(x)\lesssim\kappa\|\nabla b\|_{L^\fz(\rn)}\cm f(x).
\end{align*}
Moreover, if $\cm$ is bounded on $X$, then
$$\lf\|[b,T_\Omega](f)- [b,T_\Omega^{(\kappa)}](f) \r\|_X
\lesssim \kappa\|\cm f\|_X
\lesssim \kappa\|f\|_X,$$
which implies that
$\lim_{\kappa\to0^+}\|[b,T_\Omega]- [b,T_\Omega^{(\kappa)}]\|_{X\to X}=0$
and hence completes the proof of Lemma \ref{Teta}.
\end{proof}

\begin{proof}[Proof of Theorem \ref{thm-cpt}]
Let $b\in {\rm CMO}(\rn)$.
By the definition of ${\rm CMO}(\rn)$, we know that,
for any given $\varepsilon\in(0,\infty)$,
there exists a $b^{(\varepsilon)}\in C_{\rm c}^\fz(\rn)$ such that
$\|b-b^{(\varepsilon)}\|_{{\rm BMO}(\rn)}<\varepsilon.$
Then, by the boundedness of $[b-b^{(\varepsilon)}, T_\Omega]$ on $X$
(see Theorem \ref{thm-bdd-1}), we obtain,
for any given $\varepsilon\in(0,\infty)$ and
for any $f\in X$,
\begin{align*}
&\lf\|[b,T_\Omega](f)-[b^{(\varepsilon)},T_\Omega](f)\r\|_{X}\\
&\quad=\lf\|[b-b^{(\varepsilon)},T_\Omega](f)\r\|_{X}
\lesssim\lf\|b-b^{(\varepsilon)}\r\|_{{\rm BMO}(\rn)}\|f\|_{X}
\ls\varepsilon\|f\|_{X}.
\end{align*}
From this, Lemma \ref{Teta}, and the fact that the limit of compact operators
is also a compact operator, it follows that, to prove Theorem \ref{thm-cpt},
it suffices to show that, for any $b\in C_{\rm c}^\fz(\rn)$
and any $\kappa\in(0,\fz)$ small enough,
$[b,\,T_\Omega^{(\kappa)}]$ is a compact operator on $X$.
To this end, by the definition of compact operators,
it suffices to prove that, for any bounded subset
$\cf\subset X$,
$$[b,\,T_\Omega^{(\kappa)}]\cf:=\lf\{[b,\,T_\Omega^{(\kappa)}](f):\ f\in\cf\r\}$$
is relatively compact.
To achieve this, from Theorem \ref{l-fre}, we deduce that it suffices to
show that $[b,\,T_\Omega^{(\kappa)}]\cf$ satisfies
the conditions (i) through (iii) of Theorem \ref{l-fre}
for any given $b\in C_{\rm c}^\fz(\rn)$ and $\kappa\in(0,\fz)$ small enough.

By Theorem \ref{thm-bdd-1} and Lemma \ref{Teta},
we conclude that $[b,\,T_\Omega^{(\kappa)}]$ is bounded on $X$
for any given $\kappa\in(0,\fz)$, which implies that
$[b,\,T_\Omega^{(\kappa)}]\cf$ satisfies the condition (i) of Theorem \ref{l-fre}.

Next, since $b\in C_{\rm c}^\fz(\rn)$, it follows that
there exists a positive constant $R_0$ such that $\supp (b)\subset B(\vec{0}_n,R_0)$.
Let $M\in(2R_0,\infty)$. Then,
for any $y\in B(\vec{0}_n,R_0)$ and $x\in\rn$ with $|x|\in(M,\infty)$,
we have $|x-y|\sim |x|$.
Moreover, by this, $\Omega\in L^\infty(\mathbb{S}^{n-1})$,
and Lemma \ref{LeHolder},
we conclude that, for any $f\in\cf$ and
$x\in\rn$ with $|x|\in(M,\infty)$,
\begin{align*}
\lf|[b,\,T_\Omega^{(\kappa)}](f)(x)\r|&\le\int_\rn|b(x)-b(y)|
\frac{|\Omega(x-y)|}{|x-y|^n}
|f(y)|\,dy
\ls\int_{B(\vec{0}_n,R_0)}
\frac{|f(y)|}{|x|^n}\,dy\\
&\ls\frac{1}{|x|^n}\|f\|_{X}\lf\|{\mathbf 1}_{B(\vec{0}_n,R_0)}\r\|_{X'}
\ls\frac{1}{|x|^n}.
\end{align*}
From this and Lemma \ref{as1} with $\tz$ replaced by $\eta\in(1,\fz)$ in Remark \ref{eta},
we deduce that
\begin{align*}
&\lf\|[b,\,T_\Omega^{(\kappa)}](f){\mathbf 1}_{\{x\in\rn:\ |x|>M\}}\r\|_{X}\noz\\
&\quad\ls
\sum_{j=0}^\infty
\lf\|\frac1{|\cdot|^n} {\mathbf 1}_{\{x\in\rn:\ 2^jM<|x|\leq 2^{j+1}M\}}\r\|_{X}\ls
\sum_{j=0}^\infty
\frac{\|{\mathbf 1}_{\{x\in\rn:\ |x|\leq 2^{j+1}M\}}\|_{X}}{(2^jM)^{n}}\noz\\
&\quad\ls
\sum_{j=0}^\infty
\frac{(2^{j+1}M)^{n/\eta}}{(2^jM)^{n}}
\ls
\sum_{j=0}^\infty
\frac{1}{(2^jM)^{n(1-1/\eta)}}\lesssim
\frac{1}{M^{n(1-1/\eta)}}.
\end{align*}
Therefore, the condition (ii) of Theorem \ref{l-fre} holds true for
$[b,T_\Omega^{(\kappa)}]\mathcal{F}$.

It remains to prove that $[b,T_\Omega^{(\kappa)}]\mathcal{F}$
also satisfies the condition (iii) of Theorem \ref{l-fre}.
For any $f\in\cf$,
$\xi\in\rn\setminus\{\vec{0}_n\}$,
and $x\in\rn$, we have
\begin{align}\label{L1+L2}
&[b, T_\Omega^{(\kappa)}](f)(x)-[b, T_\Omega^{(\kappa)}](f)(x+\xi)\\
&\quad=\int_{\rn}[b(x)-b(y)] K_\kappa(x,y) f(y)\,dy
 -\int_{\rn}[b(x+\xi)-b(y)] K_\kappa(x+\xi,y) f(y)\,dy\notag\\
&\quad=[b(x)-b(x+\xi)]\int_{\rn} K_\kappa(x,y) f(y)\,dy\notag\\
&\qquad+\int_{\rn}[b(x+\xi)-b(y)] \lf[K_\kappa(x,y)-K_\kappa(x+\xi,y)\r] f(y)\,dy\notag\\
&\quad=:{\rm L}_1(x)+{\rm L}_2(x)\noz.
\end{align}
We first estimate L$_1(x)$. Observe that, by the mean value theorem and the definition
of $K_\kappa$,
\begin{align*}
\lf|{\rm L}_1(x)\r|&\le \xi\|\nabla b\|_{L^\fz(\rn)}
\lf|\int_{\{y\in\rn:\ |x-y|\ge\frac\kappa2\}}\lf[K_\kappa(x,y)
 -\frac{\Omega(x-y)}{|x-y|^n}\r]f(y)\,dy\r. \\
&\qquad\qquad\qquad\qquad
 \lf.+\int_{\{y\in\rn:\|x-y|\ge\frac\kappa2\}}\frac{\Omega(x-y)}{|x-y|^n}f(y)\,dy\r|\\
&\lesssim\xi
 \lf[\int_{\{y\in\rn:\ \kappa\ge|x-y|\ge\frac\kappa2\}}
 \lf|\frac{\Omega(x-y)}{|x-y|^n} \r||f(y)|\,dy
 +T_\Omega^\ast f(x) \r]\\
&\ls\xi\lf[\cm f(x)+T_\Omega^\ast f(x) \r],
\end{align*}
where the implicit constant is independent of $f$, $\xi$, and $x$.
From this, the boundedness of $\cm$ on $X$, and Proposition \ref{pp6},
we deduce that
\begin{align}\label{L1}
\|{\rm L}_1\|_X\ls |\xi|  \|f\|_X.
\end{align}
Now, we estimate L$_2(x)$. Observe that, for any $x,\ y,\ \xi\in\rn$ with
$|x-y|<\kappa/4$ and $|\xi|<\kappa/8$, we have
$|x-y|/\kappa<1/2$ and $|x+\xi-y|/\kappa<1/2$, which implies that
$\varphi(|x-y|/\kappa)=0=\varphi(|x+\xi-y|/\kappa)$
and hence
\begin{align}\label{K=0=K}
K_\kappa(x,y)=0=K_\kappa(x+\xi,y).
\end{align}
Besides,  for any $x,\ y,\ \xi\in\rn$ with $|x-y|\ge\kappa/4$ and $|\xi|<\kappa/8$,
we have $|\xi|\le|x-y|/2$.
From this, \eqref{Kk}, and \eqref{K=0=K}, we deduce that, for any given $\xi\in\rn$ with $|\xi|<\kappa/8$,
\begin{align*}
|{\rm L}_2(x)|&\ls
|\xi|
\int_{\{y\in\rn:\ |x-y|\ge\frac\kappa4\}}\frac{|f(y)|}{|x-y|^{n+1}}\,dy
+\int_{\{y\in\rn:\ |x-y|\ge\frac\kappa4\}}\frac{|f(y)|}{|x-y|^{n}}
\omega_\infty\lf(\frac{4|\xi|}{|x-y|}\r)\,dy\\
&\ls|\xi|
\sum_{k=0}^\fz\lf(2^k\kappa\r)^{-(n+1)} \int_{\{y\in\rn:\ 2^k\frac\kappa4\le|x-y|<2^{k+1}\frac\kappa4\}}
|f(y)|\,dy\\
&\quad+
\sum_{k=0}^\fz\lf(2^k\kappa\r)^{-n}
\omega_\infty\lf(\frac{|\xi|}{2^{k-4}\kappa}\r)
\int_{\{y\in\rn:\ 2^k\frac\kappa4\le|x-y|<2^{k+1}\frac\kappa4\}}
|f(y)|\,dy\\
&\ls\lf[|\xi|+\sum_{k=0}^\fz
\omega_\infty\lf(\frac{|\xi|}{2^{k-4}\kappa}\r)
\int_{2^{-(k+1)}}^{2^{-k}}\,\frac{d\tau}{\tau}\r]
\cm f(x)\\
&\ls\lf[|\xi| +
\int_{0}^{1}
\omega_\infty\lf(\frac{32|\xi|}{\kappa}\tau\r)
\,\frac{d\tau}{\tau}\r]
\cm f(x)\sim\lf[|\xi| +
\int_{0}^{\frac{32|\xi|}{\kappa}}
\omega_\infty\lf(\tau\r)
\,\frac{d\tau}{\tau}\r]
\cm f(x)
\end{align*}
and hence
\begin{align}\label{L2}
\|{\rm L}_2\|_X\ls \lf[|\xi| +
\int_{0}^{\frac{32|\xi|}{\kappa}}
\omega_\infty\lf(\tau\r)
\,\frac{d\tau}{\tau}\r] \|f\|_X.
\end{align}
Combining \eqref{L1+L2}, \eqref{L1}, \eqref{L2},
and the $L^\infty$-Dini condition, we have
$$\lim_{|\xi|\to0^+}
\lf\|[b, T_\Omega^{(\kappa)}](f)(\cdot+\xi)-[b, T_\Omega^{(\kappa)}](f)(\cdot) \r\|_X=0,$$
which implies the condition (iii) of Theorem \ref{l-fre}.
Thus, $[b,\,T_\Omega^{(\kappa)}]$ is a compact operator
for any given $b\in C_{\rm c}^\fz(\rn)$ and $\kappa\in(0,\fz)$.
This finishes the  proof of Theorem \ref{thm-cpt}.
\end{proof}

\subsection{Proof of Theorem \ref{thm-cpt2}}\label{s3.2}

We begin with recalling the following equivalent characterization of ${\rm CMO}(\rn)$
in terms of the local mean oscillation, which is just \cite[Theorem 3.3]{gwy20}.
\begin{lemma}\label{l-cmo char}
Let $f\in{\rm BMO}(\rn)$ and $\lz\in(0,1/2)$.
Then $f\in{\rm CMO}(\rn)$ if and only if $f$ satisfies the following three conditions:
\begin{itemize}
\item [{\rm(i)}]
$\lim_{a\to0^+}\sup_{|B|=a}\oz_{\lz}(f;B)=0$;

\item [{\rm(ii)}]
$\lim_{a\to\fz}
      \sup_{|B|=a}\oz_{\lz}(f;B)=0$;

\item [{\rm (iii)}]
$\lim_{d\to\fz} \sup_{B\cap B(\vec{0}_n,d)=\emptyset}\oz_{\lz}(f;B)=0$,
\end{itemize}
where the local mean oscillation $\oz_{\lz}(f;B)$ is as in \eqref{z3}.
\end{lemma}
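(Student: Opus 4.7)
The plan is to prove both implications. The necessity is a short approximation argument based on Lemma \ref{GR}; the sufficiency requires constructing a $C_{\rm c}^\infty(\rn)$-approximation of $f$ whose BMO error is split into regimes controlled by (i), (ii), and (iii) separately.

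For the necessity, given $f\in{\rm CMO}(\rn)$ and $\varepsilon\in(0,\infty)$, select $g\in C_{\rm c}^\infty(\rn)$ with $\|f-g\|_{\BMO(\rn)}<\varepsilon$. Since $\oz_\lz(\cdot;B)$ is sublinear in its first argument, Lemma \ref{GR} yields
\begin{align*}
\oz_\lz(f;B)\le\oz_\lz(f-g;B)+\oz_\lz(g;B)\lesssim\|f-g\|_{\BMO(\rn)}+\oz_\lz(g;B).
\end{align*}
It then suffices to verify (i)--(iii) for $g\in C_{\rm c}^\infty(\rn)$: the uniform continuity of $g$ forces $\oz_\lz(g;B)\to 0$ as $|B|\to 0^+$, giving (i); choosing $R\in(0,\infty)$ with $\supp g\subset B(\vec0_n,R)$, once $|B|$ is large enough a $\lz$-fraction of $B$ lies outside $\supp g$, so taking $c=0$ in \eqref{z3} gives (ii); the same choice $c=0$ gives $\oz_\lz(g;B)=0$ for balls $B$ disjoint from $B(\vec0_n,R)$, yielding (iii).

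For the sufficiency, assume $f\in{\rm BMO}(\rn)$ satisfies (i)--(iii). The plan is to produce, for each $\varepsilon\in(0,\infty)$, a function $g_\varepsilon:=(f\eta_R)\ast\phi_\delta\in C_{\rm c}^\infty(\rn)$ satisfying $\|f-g_\varepsilon\|_{\BMO(\rn)}<\varepsilon$, where $\eta_R\in C_{\rm c}^\infty(\rn)$ is a cutoff with $\eta_R\equiv 1$ on $B(\vec0_n,R)$ and $\supp\eta_R\subset B(\vec0_n,2R)$, and $\phi_\delta$ is a standard mollifier at scale $\delta$. By Lemma \ref{GR}, it suffices to control $\sup_{B\in\BB}\oz_\lz(f-g_\varepsilon;B)$. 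I would partition this supremum into three regimes: (a) balls with $|B|$ small; (b) balls with $|B|$ large; and (c) balls $B$ whose distance from $\vec0_n$ is large. In (a) the mollification error is dominated by the local oscillation of $f$ on small balls, which is small by (i). In (b) and (c) the truncation forces $g_\varepsilon$ to essentially vanish on $B$, so the oscillation of $f-g_\varepsilon$ on $B$ reduces to that of $f$, which is small by (ii) and (iii), respectively. A transitional range of balls — those of moderate size intersecting $\partial B(\vec0_n,R)$ — is handled by choosing $R$ large and $\delta$ small in terms of $\varepsilon$, and by absorbing $\oz_\lz(\eta_R f;B)$ there into $\|f\|_{\BMO(\rn)}$ via standard BMO estimates for pointwise multipliers.

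The main obstacle will be the coordination in the sufficiency argument between the truncation radius $R$, the mollification scale $\delta$, and the three regimes, particularly the fact that the boundary of the cutoff region $B(\vec0_n,R)$ can produce new BMO oscillation on balls straddling it. This bookkeeping — balancing (i), (ii), (iii) against the intrinsic oscillation of $f$ transported across the cutoff — is the delicate part, and is exactly what the proof in \cite[Theorem 3.3]{gwy20} is designed to address.
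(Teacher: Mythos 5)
The paper itself gives no internal proof of this lemma: it is quoted verbatim as \cite[Theorem 3.3]{gwy20}, so a blind proof has to reproduce the content of that reference rather than cite it. With that in mind, your necessity half is essentially sound, up to one technical slip: the claimed subadditivity $\oz_\lz(f;B)\le\oz_\lz(f-g;B)+\oz_\lz(g;B)$ at the \emph{same} $\lz$ is false in general, since non-increasing rearrangements only satisfy $(u+v)^*(t_1+t_2)\le u^*(t_1)+v^*(t_2)$; you need $\oz_\lz(f;B)\le\oz_{\lz/2}(f-g;B)+\oz_{\lz/2}(g;B)$ and then Lemma \ref{GR} applied with $\lz/2\in(0,1/4)$, which is a harmless repair. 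The verifications of (i)--(iii) for $g\in C_{\rm c}^\infty(\rn)$ are correct.

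The genuine gap is in the sufficiency, which is the substantial direction of the lemma. The mollify-and-truncate scheme ($g_\varepsilon:=(f\eta_R)\ast\phi_\dz$) is the right general idea, but your write-up stops exactly where the real work begins, and you explicitly defer ``the delicate part'' to \cite[Theorem 3.3]{gwy20} --- the very statement being proved. Two concrete pieces are missing. First, conditions (i)--(iii) are phrased for the median-type quantity $\oz_\lz(f;\cdot)$, whereas the regime estimates you invoke (e.g.\ dominating $f-f\ast\phi_\dz$ on a ball by the oscillation of $f$ at scale $\dz$, or absorbing the cutoff error into ``standard BMO estimates for pointwise multipliers'') require smallness of \emph{mean} oscillations; smallness of $\oz_\lz(f;B)$ on a ball does not by itself control $\frac1{|B|}\int_B|f(x)-f_B|\,dx$ on that ball, and upgrading (i)--(iii) to their mean-oscillation counterparts needs a John--Str\"omberg/Lerner-type iteration over sub-balls (and for regimes (b) and (c) one must check that the sub-balls entering that iteration stay within the families covered by (ii) and (iii)). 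This transfer is a core step of the cited proof and is absent here. Second, the transitional balls --- moderate-size balls near the origin, where none of (i)--(iii) applies directly and the error is purely the mollification error, and balls straddling $\partial B(\vec0_n,R)$ --- are precisely where $R$ and $\dz$ must be chosen quantitatively in terms of $\varepsilon$ and of the rates in (i)--(iii); no such choice is exhibited. As it stands the proposal proves the easy implication and only sketches a plausible strategy for the hard one.
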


To prove Theorem \ref{thm-cpt2},
we establish the lower and the upper estimates of commutators on $X$, respectively,
in Theorems \ref{thmm2} and \ref{thmm3} below.
\begin{proposition}\label{thmm2}
Let $b\in L_{\loc}^1(\rn)$, $\lambda\in(0,1)$,
and $X$ be a ball Banach function space.
Assume that $\cm$ is bounded on $X$ and
$\Omega\in L^\infty(\mathbb{S}^{n-1})$ satisfies that there exists an open set $\Lambda\subset \mathbb{S}^{n-1}$ such that
$\Omega$ does not change sign on $\Lambda$.
Let $B:=B(x_0,r_0)$, $k_0$, $\varepsilon_0$, $E$, and $F$ be as in Lemma \ref{thmb3}.
Then there exists a positive constant $C_{(\lz,k_0,\varepsilon_0,n)}$,
depending only on $\lz$, $k_0$, $\varepsilon_0$, and $n$,
such that, for any measurable set $Q\subset \rn$ with
$|Q |\le\frac{\lambda}{8}|B(x_0,r_0)|$,
$$
\oz_{\lz}(b;B)\|\mathbf{1}_{F}\|_{X}
\le C_{(\lz,k_0,\varepsilon_0,n)}
\lf\|[b,T_\Omega](\mathbf{1}_{F})
\mathbf{1}_{E\setminus Q}\r\|_{X}.
$$
\end{proposition}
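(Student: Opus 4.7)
The plan is to adapt the chain of inequalities in the proof of Theorem \ref{thm-bdd-2} with two modifications: first, restrict the geometric pairing set $G$ from Lemma \ref{thmb3} to avoid the exceptional set $Q$; second, retain the factor $\|\mathbf{1}_F\|_X$ on the left-hand side, balancing it, via Lemmas \ref{as1} and \ref{rh2}, against the dual factor $\|\mathbf{1}_{E\setminus Q}\|_{X'}$ produced by H\"older's inequality.

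First I would set $G':=G\cap\lf[(E\setminus Q)\times F\r]$. Since $G\setminus G'\subset Q\times F$ with $|Q|\le\frac{\lz}{8}|B|$ and $|F|=\frac{1}{2}|B|$, a direct count gives $|G\setminus G'|\le\frac{\lz}{16}|B|^2$, which, together with $|G|\ge\frac{\lz}{8}|B|^2$ from Lemma \ref{thmb3}, yields $|G'|\ge\frac{\lz}{16}|B|^2$. Invoking parts (i) and (iii) of Lemma \ref{thmb3} on $G'\subset G$, and using $|x-y|\le 2(k_0+1)r_0$ for $(x,y)\in G'$, one obtains
$$\oz_{\lz}(b;B)\cdot\frac{\lz}{16}|B|^2\le\frac{[2(k_0+1)r_0]^n}{\ez_0}\int_{G'}|b(x)-b(y)|\frac{|\Omega(x-y)|}{|x-y|^n}\,dx\,dy.$$
Next, Lemma \ref{thmb3}(ii) (the non-changing sign of $\Omega(x-y)$ and of $b(x)-b(y)$ on $E\times F$) allows one to push the absolute value outside the $y$-integral, and the observation $\overline E\cap\overline F=\emptyset$ identifies the inner integral, for $x\in E\setminus Q$, with $[b,T_\Omega](\mathbf{1}_F)(x)$. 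Using $|B|\sim r_0^n$, this gives
$$\oz_{\lz}(b;B)\ls\frac{1}{|B|}\int_{E\setminus Q}\lf|[b,T_\Omega](\mathbf{1}_F)(x)\r|\,dx,$$
where the implicit constant depends only on $\lz$, $k_0$, $\ez_0$, and $n$. Lemma \ref{LeHolder} together with the monotonicity $\|\mathbf{1}_{E\setminus Q}\|_{X'}\le\|\mathbf{1}_B\|_{X'}$ then produces
$$\oz_{\lz}(b;B)\ls\frac{\|\mathbf{1}_B\|_{X'}}{|B|}\lf\|[b,T_\Omega](\mathbf{1}_F)\mathbf{1}_{E\setminus Q}\r\|_X.$$

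To upgrade this to the desired inequality, I would multiply through by $\|\mathbf{1}_F\|_X$ and bound the resulting prefactor $\|\mathbf{1}_F\|_X\|\mathbf{1}_B\|_{X'}/|B|$ by a constant. Since $F\subset B(x_1,r_0)\subset(2k_0+1)B$, Remark \ref{eta} combined with Lemma \ref{as1} (both powered by the $X$-boundedness of $\cm$) yields $\|\mathbf{1}_F\|_X\ls\|\mathbf{1}_{(2k_0+1)B}\|_X\ls(2k_0+1)^{n/\eta}\|\mathbf{1}_B\|_X$ for some $\eta\in(1,\fz)$; then Lemma \ref{rh2} gives $\|\mathbf{1}_B\|_X\|\mathbf{1}_B\|_{X'}\ls|B|$, so the prefactor is bounded by a constant of the required form. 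The only substantive obstacle is precisely this final bookkeeping step: without Lemma \ref{rh2}, one could not decouple $\|\mathbf{1}_F\|_X$ from $\|\mathbf{1}_{E\setminus Q}\|_{X'}$ in the right proportion, so the $X$-boundedness of $\cm$ enters the proof essentially, not merely for the existence of the operator.
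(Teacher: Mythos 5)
Your proposal is correct and follows essentially the same route as the paper's proof: the same restriction of $G$ to $(E\setminus Q)\times F$ with the measure count $|G|-|Q||F|\ge\frac{\lambda}{16}|B|^2$, the same use of Lemma \ref{thmb3}(i)--(iii) and $\overline{E}\cap\overline{F}=\emptyset$ to reach $\oz_\lz(b;B)\lesssim|B|^{-1}\int_{E\setminus Q}|[b,T_\Omega](\mathbf{1}_F)|$, and the same closing combination of Lemma \ref{LeHolder}, the dilation estimate \eqref{EqHLMS}, and Lemma \ref{rh2}. The only differences are cosmetic (naming $G'$, using $(2k_0+1)B$ in place of the paper's $4k_0B$, and making the appeal to Lemma \ref{as1}/Remark \ref{eta} explicit where the paper leaves it implicit).
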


\begin{proof}
Let
$b\in L_{\loc}^1(\rn)$, $\lambda\in(0,1)$, and
$B:=B(x_0,r_0)$ with $x_0\in\rn$ and $r_0\in(0,\infty)$;
let
$\varepsilon_0$, $k_0$, $G$, $E$, and $F$
be as in Lemma \ref{thmb3};
and let
$Q$ be a measurable set in $\rn$ with
$|Q |\le\frac{\lambda}{8}|B(x_0,r_0)|$.
Then, by (i) and (iii) of Lemma \ref{thmb3},
we conclude that
$$
\omega_{\lambda}(b;B(x_0,r_0))
\lf|[(E\setminus Q)\times F]\cap G\r|\le
\frac{1}{\varepsilon_0}
\int_{[(E\setminus Q)\times F]\cap G}|b(x)-b(y)|
\lf|\Omega\lf(\frac{x-y}{|x-y|}\r)\r|\,dx\,dy.
$$
From this, the fact that $|x-y|\le2(k_0+1)r_0$
for any $(x,y)\in G$, Lemma \ref{thmb3}(ii), the observations
\begin{align*}|[(E\setminus Q)\times F]\cap G|
&\geq|G|-|Q||F|\\
&\ge\frac{\lambda}{8}|B(x_0,r_0)|^2-\frac{\lambda}{8}|B(x_0,r_0)|\frac{|B(x_0,r_0)|}{2}
=\frac{\lambda}{16}|B(x_0,r_0)|^2
\end{align*}
as well as $\overline{E}\cap \overline{F}=\emptyset$,
and the definition of $[b,T_\Omega]$,
we deduce that
\begin{align*}
\omega_{\lambda}(b;B(x_0,r_0))&\le
\frac{[2(k_0+1)r_0]^n}{\varepsilon_0
|[(E\setminus Q)\times F]\cap G|}\int_{[(E\setminus Q)\times F]\cap G}
\frac{|b(x)-b(y)|}{|x-y|^n}
\lf|\Omega\lf(\frac{x-y}{|x-y|}\r)\r|\,dx\,dy\\
&\le
\frac{16[2(k_0+1)r_0]^n}{\lambda\varepsilon_0|B(x_0,r_0)|^2}
\int_{E\setminus Q}\lf|\int_F
\frac{b(x)-b(y)}{|x-y|^n}
\Omega\lf(\frac{x-y}{|x-y|}\r)\,dy\r|\, dx\\
&\lesssim
\frac{1}{|B(x_0,r_0)|}
\int_{E\setminus Q}\lf|[b,T_\Omega](\mathbf{1}_F)\r|\, dx,
\end{align*}
which, combined with $F\subset 4k_0B(x_0,r_0)$ and Lemmas \ref{LeHolder} and \ref{rh2},
further implies that
\begin{align*}
\omega_{\lambda}(b;B(x_0,r_0))\|\mathbf{1}_{F}\|_{X}
&\ls\frac{\|\mathbf{1}_{F}\|_{X}}{|B(x_0,r_0)|}
\lf\|[b,T_\Omega](\mathbf{1}_F)\mathbf{1}_{E\setminus Q}\r\|_{X}
\lf\|\mathbf{1}_{E\setminus Q} \r\|_{X'} \\
&\ls\frac{\|\mathbf{1}_{4k_0B(x_0,r_0)}
\|_{X}\|\mathbf{1}_{B(x_0,r_0)}\|_{X'}}{|B(x_0,r_0)|}
\lf\|[b,T_\Omega](\mathbf{1}_F)\mathbf{1}_{E\setminus Q}\r\|_{X}\\
&\ls\lf\|[b,T_\Omega](\mathbf{1}_F)\mathbf{1}_{E\setminus Q}\r\|_{X}.
\end{align*}
This finishes the proof of Proposition \ref{thmm2}.
\end{proof}

To establish the upper estimates of commutators,
we need the following  equivalent $\BMO$-norm characterization
on ball Banach function spaces, namely, Lemma \ref{BMO-X} below,
which is just \cite[Theorems 1.2]{ins} and
an essential tool needed in this article.
\begin{lemma}\label{BMO-X}
Let X be a ball Banach function space such that $M$ is bounded on $X'$ and,
for any $b\in L^1_\loc(\rn)$,
$$\|b\|_{\BMO_X}:=\sup_B\frac1{\|\mathbf1_B\|_X}
\big\| |b-b_B|\mathbf1_B\big\|_X,$$
where the supremum is taken over all balls $B\subset\rn$.
Then there exists a positive constant $C$ such that,
for any $b\in\BMO(\rn)$,
$$C^{-1}\|b\|_{\BMO(\rn)}\le \|b\|_{\BMO_X}\le C\|b\|_{\BMO(\rn)}. $$
\end{lemma}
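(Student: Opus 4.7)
The plan is to handle the two directions of the equivalence separately. The lower bound $\|b\|_{\BMO(\rn)}\lesssim\|b\|_{\BMO_X}$ follows from a short H\"older-type calculation combined with the sharp relation $\|\mathbf{1}_B\|_X\|\mathbf{1}_B\|_{X'}\sim|B|$. The upper bound $\|b\|_{\BMO_X}\lesssim\|b\|_{\BMO(\rn)}$ is the substantive direction and rests on the classical John--Nirenberg inequality combined with a reverse-doubling-type estimate for characteristic functions of subsets of a ball.

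For the lower direction, fix a ball $B\subset\rn$. Lemma \ref{LeHolder} gives
$$
\frac{1}{|B|}\int_B|b(x)-b_B|\,dx
\le\frac{\||b-b_B|\mathbf{1}_B\|_X\,\|\mathbf{1}_B\|_{X'}}{|B|}.
$$
Since $\cm$ is assumed bounded on the ball Banach function space $X'$ (whose associate space is $X$ by Lemma \ref{Lesdual}), Lemma \ref{rh2} applied to $X'$ yields $\|\mathbf{1}_B\|_{X'}\|\mathbf{1}_B\|_X\lesssim|B|$. Combined with the trivial H\"older bound $|B|\le\|\mathbf{1}_B\|_X\|\mathbf{1}_B\|_{X'}$, this gives $\|\mathbf{1}_B\|_{X'}/|B|\sim 1/\|\mathbf{1}_B\|_X$, and taking the supremum over balls yields the desired estimate.

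For the upper direction, fix a ball $B$ and apply the layer-cake identity
$$
|b-b_B|\mathbf{1}_B=\int_0^\infty\mathbf{1}_{E_\alpha}\,d\alpha,
\qquad E_\alpha:=\{x\in B:\ |b(x)-b_B|>\alpha\},
$$
followed by the Minkowski-type inequality in Lemma \ref{LeMI} to deduce
$$
\bigl\||b-b_B|\mathbf{1}_B\bigr\|_X\le\int_0^\infty\|\mathbf{1}_{E_\alpha}\|_X\,d\alpha.
$$
The classical John--Nirenberg inequality supplies dimensional constants $c_1,c_2\in(0,\infty)$ with $|E_\alpha|\le c_1|B|\exp(-c_2\alpha/\|b\|_{\BMO(\rn)})$ for every $\alpha\in(0,\infty)$. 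To convert this volume decay into an $X$-norm decay, the critical intermediate step is to establish a reverse-doubling property, which I denote $(\star)$: there exists $\delta\in(0,1)$ depending only on $X$ such that
$$
\|\mathbf{1}_E\|_X\le C\bigl(|E|/|B|\bigr)^{\delta}\|\mathbf{1}_B\|_X
\qquad\text{for every measurable }E\subset B.
$$
Granting $(\star)$, we obtain $\|\mathbf{1}_{E_\alpha}\|_X\lesssim\|\mathbf{1}_B\|_X\exp(-c_2\delta\alpha/\|b\|_{\BMO(\rn)})$, and integrating in $\alpha$ produces $\||b-b_B|\mathbf{1}_B\|_X\lesssim\|b\|_{\BMO(\rn)}\|\mathbf{1}_B\|_X$, which is the upper bound we seek.

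The main obstacle is therefore the proof of $(\star)$. My plan is to extract it from the boundedness of $\cm$ on $X'$ as follows. Remark \ref{eta}, applied to $X'$, supplies an exponent $\eta\in(1,\infty)$ such that $\cm^{(\eta)}$ is bounded on $X'$; Lemma \ref{as1} then yields the forward-doubling estimate $\|\mathbf{1}_{\beta B}\|_{X'}\lesssim\beta^{n/\eta}\|\mathbf{1}_B\|_{X'}$ for every $\beta\in[1,\infty)$. Combined with the equivalence $\|\mathbf{1}_B\|_X\|\mathbf{1}_B\|_{X'}\sim|B|$ already used above, this converts forward doubling in $X'$ into reverse doubling in $X$ at the level of concentric balls. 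To upgrade from concentric balls to a genuinely arbitrary measurable subset $E\subset B$, I would exploit the pointwise bound $\cm^{(\eta)}(\mathbf{1}_E)(x)\ge(|E|/|B|)^{1/\eta}\mathbf{1}_B(x)$ in conjunction with the duality $X=X''$ (Lemma \ref{Lesdual}) and the boundedness of $\cm^{(\eta)}$ on $X'$ to obtain $(\star)$ with the explicit exponent $\delta=1/\eta$. Making this dualization step rigorous---that is, passing cleanly from ball-to-ball comparisons to comparisons on arbitrary measurable subsets while keeping track of the precise dependence on $|E|/|B|$---is the delicate point of the whole argument.
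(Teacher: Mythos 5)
The paper does not actually prove this lemma; it is quoted verbatim from Izuki--Noi--Sawano \cite{ins}, so your self-contained argument is compared against that route rather than against an in-paper proof. Your lower bound is correct: applying Lemma \ref{rh2} to the ball Banach function space $X'$ (legitimate by Remark \ref{bbf} and the hypothesis that $\cm$ is bounded on $X'$) and using $X''=X$ from Lemma \ref{Lesdual} gives $\|\mathbf{1}_B\|_X\|\mathbf{1}_B\|_{X'}\sim|B|$, and the rest is H\"older. In the upper bound, one small repair is needed before the main issue: Lemma \ref{LeMI} cannot be cited for $\|\int_0^\infty\mathbf{1}_{E_\alpha}\,d\alpha\|_X\le\int_0^\infty\|\mathbf{1}_{E_\alpha}\|_X\,d\alpha$, since its conclusion is $|E|\sup_{y\in E}\|F(\cdot,y)\|_X$ and here the parameter set $(0,\infty)$ has infinite measure; however, the genuine integral Minkowski inequality does follow from exactly the duality-plus-Tonelli computation inside the proof of Lemma \ref{LeMI} (stop before taking the supremum in $y$), so this is fixable.

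The genuine gap is the justification of your key estimate $(\star)$. The mechanism you describe---the pointwise bound $\cm^{(\eta)}(\mathbf{1}_E)\ge(|E|/|B|)^{1/\eta}\mathbf{1}_B$ together with the boundedness of $\cm^{(\eta)}$ on $X'$---only yields $\|\mathbf{1}_E\|_{X'}\gtrsim(|E|/|B|)^{1/\eta}\|\mathbf{1}_B\|_{X'}$, a lower bound in the associate space; to turn that into the upper bound $\|\mathbf{1}_E\|_X\lesssim(|E|/|B|)^{\delta}\|\mathbf{1}_B\|_X$ you would need a reverse H\"older inequality $\|\mathbf{1}_E\|_X\|\mathbf{1}_E\|_{X'}\lesssim|E|$ for \emph{arbitrary} measurable $E$, which Lemma \ref{rh2} supplies only for balls, and there is no substitute in the paper. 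The step can be closed by dualizing on the other side: take $\eta\in(1,\infty)$ from Remark \ref{eta} applied to $X'$; for $g\in X'$ with $\|g\|_{X'}\le1$ and any $x\in B$, H\"older on $E\subset B$ gives
\begin{equation*}
\int_E|g(y)|\,dy\le|E|^{1-1/\eta}\lf(\int_B|g(y)|^\eta\,dy\r)^{1/\eta}
\le|E|^{1-1/\eta}|B|^{1/\eta}\,\cm^{(\eta)}(g)(x),
\end{equation*}
so multiplying by $\mathbf{1}_B$, taking $\|\cdot\|_{X'}$, and using $\|\cm^{(\eta)}(g)\|_{X'}\lesssim\|g\|_{X'}$ together with $|B|\le\|\mathbf{1}_B\|_X\|\mathbf{1}_B\|_{X'}$ yields $\int_E|g|\lesssim(|E|/|B|)^{1-1/\eta}\|\mathbf{1}_B\|_X$; taking the supremum over such $g$ and invoking Lemma \ref{Lesdual} gives $(\star)$ with $\delta=1-1/\eta$ (not $1/\eta$). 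With these two repairs your layer-cake/John--Nirenberg scheme does prove the lemma, and is in spirit the argument of the cited reference \cite{ins}; as written, though, the decisive step is announced rather than proved, and the announced route would not go through.
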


Next, we give upper estimates of commutators on $X$ as follows.
\begin{proposition}\label{thmm3}
Let $b\in \BMO(\rn)$
and $X$ be a ball Banach function space.
Assume that $\cm$ is bounded on $X$ and $X'$,
and $\Omega\in L^\infty(\mathbb{S}^{n-1})$ satisfies that
there exists an open set $\Lambda\subset \mathbb{S}^{n-1}$ such that
$\Omega$ does not change sign on $\Lambda$.
Let $B(x_0,r_0)$ and $F\subset B(x_1,r_0)$ be as in Lemma \ref{thmb3}.
Then there exists positive constants $C$, $d_0$, and $\delta$
such that, for any $d\in(0,\fz)$ with $d\geq d_0$,
$$
\lf\|[b,T_\Omega](\mathbf{1}_{F})
\mathbf{1}_{B(x_0,2^{d+1}r_0)\setminus B(x_0,2^{d}r_0)}\r\|_{X}
\le C2^{-\delta dn}d \|b\|_{\BMO(\rn)}\|\mathbf{1}_{F}\|_{X},
$$
where the positive constant $C$ is independent of $d$ as well as $B(x_0,r_0)$,
and $d_0$ is a large constant depending only on $k_0$ in Lemma \ref{thmb3}.
\end{proposition}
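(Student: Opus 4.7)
The plan is to obtain a pointwise upper bound for $[b,T_\Omega](\mathbf{1}_F)$ on the annulus $B(x_0,2^{d+1}r_0)\setminus B(x_0,2^dr_0)$, convert it to an $X$-norm estimate using the equivalent $\BMO$-characterization on $X$ (Lemma \ref{BMO-X}), and finally replace $\|\mathbf{1}_{B(x_0,2^{d+1}r_0)}\|_X$ by $\|\mathbf{1}_F\|_X$ at the cost of a gain $2^{-\delta dn}$ coming from the doubling-type inequality \eqref{EqHLMS}. Throughout, set $B_d:=B(x_0,2^{d+1}r_0)$, and choose $d_0$ so large that $2^{d_0}\ge 4(2k_0+1)$; then, for any $d\ge d_0$, any $x\in B_d\setminus B(x_0,2^dr_0)$, and any $y\in F\subset B(x_1,r_0)\subset B(x_0,(2k_0+1)r_0)$, one has $|x-y|\sim 2^dr_0$.

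First I would establish the pointwise estimate
$$
\lf|[b,T_\Omega](\mathbf{1}_F)(x)\r|
\lesssim \frac{1}{(2^dr_0)^n}\int_{F}|b(x)-b(y)|\,dy
\le 2^{-dn}\lf|b(x)-b_{B_d}\r|+2^{-dn}\frac{1}{r_0^n}\int_{F}|b(y)-b_{B_d}|\,dy
$$
for $x$ in the annulus, where I used $|\Omega|\le\|\Omega\|_{L^\infty(\mathbb{S}^{n-1})}$ and $|F|\lesssim r_0^n$. The second integral is bounded by $d\,\|b\|_{\BMO(\rn)}r_0^n$ via standard $\BMO$ chaining (the number of doublings needed to pass from $B(x_1,r_0)$, and hence from $F$, up to $B_d$ is comparable to $d$), together with $|F|\sim|B(x_1,r_0)|$. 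Consequently,
$$
\lf|[b,T_\Omega](\mathbf{1}_F)(x)\r|
\lesssim 2^{-dn}\lf|b(x)-b_{B_d}\r|+2^{-dn}d\,\|b\|_{\BMO(\rn)}
$$
uniformly for $x$ in the annulus.

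Next I would take $X$-norms. Since the annulus lies inside $B_d$, Definition \ref{Debqfs}(ii) gives
$$
\lf\|[b,T_\Omega](\mathbf{1}_F)\mathbf{1}_{B_d\setminus B(x_0,2^dr_0)}\r\|_X
\lesssim 2^{-dn}\lf\|(b-b_{B_d})\mathbf{1}_{B_d}\r\|_X
+2^{-dn}d\,\|b\|_{\BMO(\rn)}\lf\|\mathbf{1}_{B_d}\r\|_X.
$$
Because $\cm$ is bounded on $X'$, Lemma \ref{BMO-X} yields $\|(b-b_{B_d})\mathbf{1}_{B_d}\|_X\lesssim\|b\|_{\BMO(\rn)}\|\mathbf{1}_{B_d}\|_X$, so both terms collapse into
$$
\lf\|[b,T_\Omega](\mathbf{1}_F)\mathbf{1}_{B_d\setminus B(x_0,2^dr_0)}\r\|_X
\lesssim 2^{-dn}d\,\|b\|_{\BMO(\rn)}\lf\|\mathbf{1}_{B_d}\r\|_X.
$$

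Finally, I need to pass from $\|\mathbf{1}_{B_d}\|_X$ to $\|\mathbf{1}_F\|_X$, and this is where I expect the main obstacle: since $F$ is not a ball and $\|\cdot\|_X$ has no explicit form, the only tools available are the characteristic-function estimates. Since $\cm$ is bounded on $X$, Remark \ref{eta} furnishes $\eta\in(1,\infty)$ such that $\cm^{(\eta)}$ is bounded on $X$, and then Lemma \ref{as1} yields $\|\mathbf{1}_{B_d}\|_X\lesssim 2^{dn/\eta}\|\mathbf{1}_{B(x_0,r_0)}\|_X$ and, because $B(x_0,r_0)$ and $B(x_1,r_0)$ both sit inside $B(x_0,(2k_0+2)r_0)$, also $\|\mathbf{1}_{B(x_0,r_0)}\|_X\sim\|\mathbf{1}_{B(x_1,r_0)}\|_X$ with constants depending on $k_0$ and $n$. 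To relate this to $\|\mathbf{1}_F\|_X$, I would combine Lemma \ref{LeHolder}, $|F|=\tfrac12|B(x_1,r_0)|$, and the reverse bound $\|\mathbf{1}_{B(x_1,r_0)}\|_X\|\mathbf{1}_{B(x_1,r_0)}\|_{X'}\lesssim|B(x_1,r_0)|$ from Lemma \ref{rh2}, giving $\|\mathbf{1}_F\|_X\sim\|\mathbf{1}_{B(x_1,r_0)}\|_X$. Assembling these yields $\|\mathbf{1}_{B_d}\|_X\lesssim 2^{dn/\eta}\|\mathbf{1}_F\|_X$, and substituting above with $\delta:=1-1/\eta\in(0,1)$ delivers the announced inequality.
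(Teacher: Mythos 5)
Your proposal is correct, and it reaches the stated bound by a route that differs from the paper's in a few meaningful ways. The paper splits $b(x)-b(y)$ around $b_{B(x_1,r_0)}$ and applies the H\"older inequality (Lemma \ref{LeHolder}) already at the pointwise stage, so that the factors $\|\mathbf{1}_F\|_X$ and $\|\mathbf{1}_F\|_{X'}$ appear automatically; it then needs the $\BMO$--$X$ characterization of Lemma \ref{BMO-X} twice, once in $X$ (for the term carrying $|b(x)-b_{B(x_1,r_0)}|$, after chaining up to $B(x_1,2^{d+v_0}r_0)$) and once in $X'$ (for the term carrying $\||b-b_{B(x_1,r_0)}|\mathbf{1}_F\|_{X'}$, which uses the assumed boundedness of $\cm$ on $X=X''$), before cancelling the spare factors with Lemma \ref{rh2} and \eqref{EqHLMS}. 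You instead center the decomposition at $b_{B_d}$, dispose of the $y$-part by elementary $L^1$-type $\BMO$ chaining (giving the factor $d\|b\|_{\BMO(\rn)}$ pointwise), invoke Lemma \ref{BMO-X} only once and only in $X$, and then pay for the convenient pointwise bound at the end, where you must convert $\|\mathbf{1}_{B_d}\|_X$ into $\|\mathbf{1}_F\|_X$; your key observation there --- that $|F|=\tfrac12|B(x_1,r_0)|$, Lemma \ref{LeHolder}, and Lemma \ref{rh2} combine to give $\|\mathbf{1}_{B(x_1,r_0)}\|_X\ls\|\mathbf{1}_F\|_X$ --- is correct and is precisely the step the paper's arrangement avoids. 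Both arguments use the same toolkit (Lemma \ref{BMO-X}, the dilation estimate \eqref{EqHLMS} with $\eta$ from Remark \ref{eta}, Lemma \ref{rh2}) and produce the same exponent $\delta=1-1/\eta$; your version is slightly more economical in its use of the $X'$-side of Lemma \ref{BMO-X}, at the cost of the extra equivalence $\|\mathbf{1}_F\|_X\sim\|\mathbf{1}_{B(x_1,r_0)}\|_X$ and of the (routine, but worth writing out) comparison $\|\mathbf{1}_{B(x_0,r_0)}\|_X\sim\|\mathbf{1}_{B(x_1,r_0)}\|_X$, which should be justified by applying \eqref{EqHLMS} to each ball inside a $\sim k_0$-dilate of the other rather than merely by their lying in a common ball.
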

\begin{proof}
Without loss of generality, we may assume that $\|b\|_{\BMO(\rn)}=1$.
Let
$b\in \BMO(\rn)$ and $B:=B(x_0,r_0)$ with $x_0\in\rn$ and $r_0\in(0,\infty)$;
let
$\varepsilon_0$, $k_0$, $G$, $E$, and $F\subset B(x_1,r_0)$
be as in Lemma \ref{thmb3};
and let
$d_0$ be a positive constant such that $2^{d_0}\in(4k_0,\infty)$.
Then, for any given positive constant $d\ge d_0$
and for any $x\in B(x_0,2^{d+1}r_0)\setminus B(x_0,2^{d}r_0)$ and $y\in B(x_1,r_0)$,
we have
$|x-y|\sim 2^d r_0.$
By this and Lemma \ref{LeHolder}, we conclude that,
for any $x\in B(x_0,2^{d+1}r_0)\setminus B(x_0,2^{d}r_0)$,
\begin{align}\label{upp1}
\lf|[b,T_\Omega](\mathbf{1}_{F})(x)\r|
&=\lf|\int_{\rn}[b(x)-b(y)]\frac{\Omega(x-y)}{|x-y|^n}\mathbf{1}_{F}(y)\,dy \r| \\
&\le \int_{\rn}\lf|b(x)-b_{B(x_1,r_0)}+b_{B(x_1,r_0)}-b(y)\r|
\frac{|\Omega(x-y)|}{|x-y|^n}\mathbf{1}_{F}(y)\,dy\noz\\
&\le \lf|b(x)-b_{B(x_1,r_0)}\r|\int_{\rn}\frac{|\Omega(x-y)|}{|x-y|^n}\mathbf{1}_{F}(y)\,dy\noz\\
&\quad+\int_{\rn}\lf|b_{B(x_1,r_0)}-b(y)\r|
\frac{|\Omega(x-y)|}{|x-y|^n}\mathbf{1}_{F}(y)\,dy. \noz\\
&\ls\frac{\|\Omega\|_{L^\fz(\mathbb{S}^{n-1})}}{2^{dn}r_0^n}
\lf|b(x)-b_{B(x_1,r_0)}\r|
\lf\|\mathbf{1}_{F}\r\|_{X}
\lf\|\mathbf{1}_{F}\r\|_{X'} \noz\\
&\quad+\frac{\|\Omega\|_{L^\fz(\mathbb{S}^{n-1})}}{2^{dn}r_0^n}
\lf\|\mathbf{1}_{F}\r\|_{X}
\lf\||b-b_{B(x_1,r_0)}|\mathbf{1}_{F}\r\|_{X'} \noz\\
&=:{\rm H}_1(x)+{\rm H}_2(x). \noz
\end{align}

Let $v_0\in\{2,3,4,\dots\}$, depending only on $k_0$,
such that $B(x_1,2^{v_0}r_0)\ni x_0$.
Thus, for any $y\in B(x_0,2^{d+1}r_0)$, we have
$$|y-x_1|\le|y-x_0|+|x_0-x_1|
\le2^{d+1}r_0+2^{v_0}r_0
\le2^{\max\{d+1,v_0\}+1}r_0
\le2^{d+v_0}r_0,$$
which implies that
\begin{align}\label{Bx0x1}
B(x_0,2^{d+1}r_0)\subset  B(x_1,2^{d+v_0}r_0).
\end{align}
Moreover, by $\|b\|_{\BMO(\rn)}=1$, it is easy to see that
$$\lf|b_{B(x_1,r_0)}-b_{2^{d+v_0}B(x_1,r_0)} \r|\le(d+v_0)2^n.$$
From this, \eqref{upp1}, \eqref{Bx0x1}, Lemmas \ref{BMO-X},
Lemma \ref{as1} with $\tz$ replaced by $\eta\in(1,\fz)$ in Remark \ref{eta},
the conclusion $F\subset B(x_1,r_0)$ in Lemma \ref{thmb3},
and Lemma \ref{rh2} with $B$ replaced by $B(x_1,r_0)$,
we deduce that
\begin{align}\label{H1}
&\lf\|{\rm H}_1\mathbf{1}_{B(x_0,2^{d+1}r_0)\setminus B(x_0,2^{d}r_0)}\r\|_X\\
&\quad\ls2^{-dn}r_0^{-n}\lf\|\mathbf{1}_{F}\r\|_{X}
\lf\|\mathbf{1}_{F}\r\|_{X'}
\lf\||b-b_{B(x_1,r_0)}|\mathbf{1}_{B(x_0,2^{d+1}r_0)\setminus B(x_0,2^{d}r_0)} \r\|_X\noz\\
&\quad\ls2^{-dn}r_0^{-n}\lf\|\mathbf{1}_{F}\r\|_{X}
\lf\|\mathbf{1}_{F}\r\|_{X'}
\lf\||b-b_{B(x_1,r_0)}|\mathbf{1}_{B(x_1,2^{d+v}r_0)} \r\|_X\noz\\
&\quad\ls2^{-dn}r_0^{-n}\lf\|\mathbf{1}_{F}\r\|_{X}
\lf\|\mathbf{1}_{F}\r\|_{X'}
\lf[\lf\|\lf|b-b_{B(x_1,2^{d+v_0}r_0)}\r|\mathbf{1}_{B(x_1,2^{d+v_0}r_0)} \r\|_X
+d\lf\|\mathbf{1}_{B(x_1,2^{d+v_0}r_0)}\r\|_X\r]\noz\\
&\quad\ls2^{-dn}r_0^{-n}d\lf\|\mathbf{1}_{F}\r\|_{X}
\lf\|\mathbf{1}_{F}\r\|_{X'}
\lf\|\mathbf{1}_{B(x_1,2^{d+v_0}r_0)}\r\|_X\noz\\
&\quad\ls2^{-(1-1/\eta) dn}d
\lf\|\mathbf{1}_{F}\r\|_{X}
\lf\|\mathbf{1}_{B(x_1,r_0)}\r\|_{X'}
\lf\|\mathbf{1}_{B(x_1,r_0)}\r\|_X
r_0^{-n}\noz\\
&\quad\ls2^{-(1-1/\eta) dn}d
\lf\|\mathbf{1}_{F}\r\|_{X}.\noz
\end{align}
Similarly, by \eqref{upp1}, the fact $F\subset B(x_1,r_0)$ again,
Lemmas \ref{Lesdual}, \ref{BMO-X}, \ref{as1}, and \ref{rh2},
we conclude that
\begin{align}\label{H2}
&\lf\|{\rm H}_2\mathbf{1}_{B(x_0,2^{d+1}r_0)\setminus B(x_0,2^{d}r_0)}\r\|_X \\
&\quad\ls 2^{-dn}r_0^{-n}\lf\|\mathbf{1}_{F}\r\|_{X}
\lf\||b-b_{B(x_1,r_0)}|\mathbf{1}_{F}\r\|_{X'}
\lf\|\mathbf{1}_{B(x_0,2^{d+1}r_0)\setminus B(x_0,2^{d}r_0)}\r\|_X \noz \\
&\quad\ls 2^{-dn}r_0^{-n}\lf\|\mathbf{1}_{F}\r\|_{X}
\lf\||b-b_{B(x_1,r_0)}|\mathbf{1}_{B(x_1,r_0)}\r\|_{X'}
\lf\|\mathbf{1}_{B(x_1,2^{d}r_1)}\r\|_X \noz\\
&\quad\ls 2^{-(1-1/\eta) dn}d
\lf\|\mathbf{1}_{F}\r\|_{X}
\lf\|\mathbf{1}_{B(x_1,r_0)}\r\|_{X'}
\lf\|\mathbf{1}_{B(x_1,r_0)}\r\|_X
r_0^{-n}\noz\\
&\quad\ls2^{-(1-1/\eta) dn}d
\lf\|\mathbf{1}_{F}\r\|_{X}.\noz
\end{align}
Combining \eqref{upp1}, \eqref{H1}, and \eqref{H2},
and letting $\dz:=1-1/\eta$,
we then complete the proof of Proposition \ref{thmm3}.
\end{proof}

\begin{proof}[Proof of Theorem \ref{thm-cpt2}]
By Theorem \ref{thm-bdd-2}, we conclude that $b\in\BMO(\rn)$ and then,
without loss of generality, we may assume that $\|b\|_{{\rm BMO}(\rn)}=1$.
To show $b\in{\rm CMO}(\rn)$,
we use a contradiction argument via Lemma \ref{l-cmo char}.
Now, observe that, if $b\notin {\rm CMO}(\rn)$, then $b$ does not satisfy at least one of
(i), (ii), and (iii) of Lemma \ref{l-cmo char}.
To finish the proof of this theorem,
we only need to show that, if $b$ does not satisfy
one of (i), (ii), and (iii) of Lemma \ref{l-cmo char},
then $[b, T_\Omega]$ is not compact on $X$.
We prove this by three cases as follows.

{\bf Case i)} Suppose that $b$ does not satisfy Lemma \ref{l-cmo char}(i).
In this case, there exist a constant $\dz_0\in(0, 1)$ and a sequence of balls
$\{ B_j\}_{j\in\nn}$, with $| B_j|\to0$ as $j\to\fz$,
such that, for any $j\in\nn$,
\begin{align}\label{upp2}
\oz_\lz(b; B_j)\ge\dz_0,
\end{align}
where $\lz\in(0,1/2]$ and $\oz_\lz(b; B_j)$ is as in \eqref{z3}
with $f$ and $B$ replaced, respectively, by $b$ and $B_j$.
For any given ball $B:=B(x_0,r_0)$,
let $E$ and $F$ be the sets associated with $B$ in Lemma \ref{thmb3},
$$f:=\|\mathbf1_{F}\|_X^{-1}\mathbf1_{F},$$
and $2C_0:=C_{(\lz,k_0,\varepsilon_0,n)}$ in Proposition \ref{thmm2}.
Then, by Proposition \ref{thmm2}, we conclude that, for any measurable set
$Q\subset\rn$ with $|Q|\le\frac{\lz}{8}|B|$,
\begin{align}\label{low1}
\lf\|[b,T_\Omega](f)
\mathbf{1}_{E\setminus Q}\r\|_{X}
\ge2C_0\oz_{\lz}(b;B).
\end{align}
For such chosen $C_0$ and $\dz_0$, by Proposition \ref{thmm3},
there exists a positive constant $d_0$ such that
\begin{align}\label{upp3}
\lf\|[b,T_\Omega](f)
\mathbf{1}_{\rn\setminus B(x_0,2^{d_0}r_0)}\r\|_{X}
\le\sum_{k=0}^\fz \lf\|[b,T_\Omega](\mathbf{1}_{F})
\mathbf{1}_{B(x_0,2^{d_0+k+1}r_0)\setminus B(x_0,2^{d_0+k}r_0)}\r\|_{X}
\le C_0\dz_0.
\end{align}
Take a subsequence of $\{ B_j\}_{j\in\nn}$,
still denoted by $\{ B_j\}_{j\in\nn}$,
such that, for any $j\in\nn$,
$$\frac{|{B}_{j+1}|}{| B_j|}
\le\min\lf\{\frac{\lz^2}{64},2^{-2 d_0 n}\r\}.$$
Let $\widetilde{B}_j:=(|{B}_{j-1}|/|{B}_{j}|)^{1/2n}  B_j$
for any $j\in\nn$ and $j\ge2$.
Then it is easy to see that, for any $j\in\nn$ and $j\ge2$,
$$\lf(\frac{|{B}_{j-1}|}{| B_j|}\r)^{\frac1{2n}}\ge2^{d_0}
\quad{\rm and}\quad |\widetilde{B}_j|\le\frac{\lz}{8}|{B}_{j-1}|.$$
From this and the monotonicity of $\{B_j\}_{j\in\nn}$,
we deduce that, for any integers $k$ and $j$ with $k>j\ge2$,
\begin{align}\label{Bkj}
2^{d_0}B_k\subset \widetilde{B}_k
\quad{\rm and}\quad
|\widetilde{B}_k|\le\frac{\lz}{8}|{B}_{k-1}|\le\frac{\lz}{8}|{B}_{j}|
\end{align}
Now, for any $j\in\nn$, let $E_j$ and $F_j$ be the sets associated with $B_j$
as in Lemma \ref{thmb3} with $B$ replaced by $B_j$, and
$$f_j:=\lf\|\mathbf1_{F_j}\r\|_X^{-1}\mathbf1_{F_j}.$$
Then, for any integers $k$ and $j$ with $k>j\ge2$,
by \eqref{low1}, \eqref{upp2}, \eqref{Bkj}, and \eqref{upp3}, we conclude that
$$\lf\|[b,T_\Omega](f_j)
\mathbf{1}_{E_j\setminus \widetilde{B}_k}\r\|_{X}
\ge2C_0\oz_{\lz}(b;B)
\ge2C_0\dz_0$$
and
$$\lf\|[b,T_\Omega](f_k)
\mathbf{1}_{E_j\setminus \widetilde{B}_k}\r\|_{X}
\le\lf\|[b,T_\Omega](f_k)
\mathbf{1}_{\rn\setminus 2^{d_0}B_k}\r\|_{X}
\le C_0\dz_0,$$
which further implies that
\begin{align*}
&\lf\|[b,T_\Omega](f_j)-[b,T_\Omega](f_k)\r\|_{X}\\
&\quad\ge\lf\|\lf\{[b,T_\Omega](f_j)-[b,T_\Omega](f_k)\r\}
\mathbf{1}_{E_j\setminus \widetilde{B}_k}\r\|_{X}\\
&\quad\ge\lf\|[b,T_\Omega](f_j)
\mathbf{1}_{E_j\setminus \widetilde{B}_k}\r\|_{X}
-\lf\|[b,T_\Omega](f_k)
\mathbf{1}_{E_j\setminus \widetilde{B}_k}\r\|_{X}
\ge C_0\dz_0.
\end{align*}
Therefore, $\{[b,T_\Omega]f_j\}_{j\in\nn}$ is not relatively compact in $X$,
which leads to a contradiction with  the compactness of $[b,T_\Omega]$ on $X$.
This shows that $b$ satisfies Lemma \ref{l-cmo char}(i),
which is the desired conclusion.

{\bf Case ii)} Suppose that $b$ dose not satisfy Lemma \ref{l-cmo char}(ii).
In this case, similarly to above Case i),
there exist a $\dz_0\in(0, 1)$ and a sequence of balls
$\{B_j\}_{j\in\nn}$ such that, for any $j\in\nn$,
$$\oz_\lz(b; B_j)\ge\dz_0
\quad{\rm and}\quad
\frac{|{B}_{j}|}{| B_{j+1}|}
\le\min\lf\{\frac{\lz^2}{64},2^{-2 d_0 n}\r\},$$
where $C_0$ and $d_0$ are as in Case i) such that \eqref{low1}
and \eqref{upp3} hold true.
For any $j\in\nn$, let $E_j$, $F_j$, $f_j$ be as in Case i),
and $\widetilde{B}_j:=(|{B}_{j}|/|{B}_{j-1}|)^{1/2n}  B_{j-1}$
for any $j\ge2$.
Then it is easy to see that, for any integers $k$ and $j$ with $2\le k\le j$,
$$2^{d_0}B_{k-1}\subset \widetilde{B}_k
\quad{\rm and}\quad
|\widetilde{B}_k|\le\frac{\lz}{8}|{B}_{j}|$$
Using a method similar to that used in Case i),
we conclude that
$$\lf\|[b,T_\Omega](f_j)-[b,T_\Omega](f_k)\r\|_{X}\ge C_0\dz_0,$$
and hence
$\{[b,T_\Omega]f_j\}_{j\in\nn}$ is not relatively compact in $X$,
which is a contradiction.
This shows that $b$ satisfies Lemma \ref{l-cmo char}(ii),
which is also the desired conclusion.

{\bf Case iii)} Suppose that $b$ does not satisfy Lemma \ref{l-cmo char}(iii).
In this case, there exist a $\dz_0\in(0, 1)$ and a sequence of balls
$\{B_j\}_{j\in\nn}$ such that, for any $j\in\nn$,
\begin{align}\label{upp4}
\oz_\lz(b; B_j)\ge\dz_0.
\end{align}
From this and Cases i) and ii), we deduce that there exist
a constant $d_1\in[d_0,\fz)$ with $d_0$ as in Lemma \ref{thmb3},
and a subsequence of balls $\{B_j\}_{j\in\nn}$,
still denoted by $\{B_j\}_{j\in\nn}$,
such that
$$|B_j|\sim1,\quad\forall\,j\in\nn$$
and
$$2^{d_1} B_i\cap 2^{d_1} B_j=\emptyset,\quad\forall\,i\neq j.$$
For any $j\in\nn$, let $E_j$, $F_j$, $f_j$, and $C_0$ be as in Case i).
Notice that, for any positive integers $k$ and $j$,
$$\lf(2^{d_0} B_k\cap E_j\r)\subset\lf(2^{d_1} B_k\cap 2^{d_1} B_j\r)=\emptyset.$$
By this, Proposition \ref{thmm2} with $Q:=\emptyset$,
and \eqref{upp4},
we conclude that, for any positive integers $k$ and $j$,
\begin{align}\label{low2}
\lf\|[b,T_\Omega](f_j)
\mathbf{1}_{E_j\setminus2^{d_0} B_k}\r\|_{X}
=\lf\|[b,T_\Omega](f_j)
\mathbf{1}_{E_j}\r\|_{X}
\ge2C_0\oz_{\lz}(b;B)
\ge2C_0\tz_0.
\end{align}
Moreover, from Proposition \ref{thmm3}, we deduce that,
for any positive integers $k$ and $j$,
\begin{align}\label{upp5}
\lf\|[b,T_\Omega](f_k)
\mathbf{1}_{E_j\setminus2^{d_0} B_k}\r\|_{X}
\le\lf\|[b,T_\Omega](f_k)
\mathbf{1}_{\rn\setminus 2^{d_0}B_k}\r\|_{X}
\le C_0\dz_0.
\end{align}
Combining \eqref{low2} and \eqref{upp5}, we obtain
\begin{align*}
&\lf\|[b,T_\Omega](f_j)-[b,T_\Omega](f_k)\r\|_{X}\\
&\quad\ge\lf\|\lf\{[b,T_\Omega](f_j)-[b,T_\Omega](f_k)\r\}
\mathbf{1}_{E_j\setminus2^{d_0} B_k}\r\|_{X}\\
&\quad\ge\lf\|[b,T_\Omega](f_j)
\mathbf{1}_{E_j\setminus2^{d_0} B_k}\r\|_{X}
-\lf\|[b,T_\Omega](f_k)
\mathbf{1}_{E_j\setminus2^{d_0} B_k}\r\|_{X}
\ge C_0\dz_0
\end{align*}
and hence
$\{[b,T_\Omega]f_j\}_{j\in\nn}$ is not relatively compact in $X$,
which is a contradiction.
This shows that $b$ satisfies Lemma \ref{l-cmo char}(iii),
which completes the proof of Theorem \ref{thm-cpt2}.
\end{proof}

\section{Applications\label{s5}}

In this section, we apply
Theorems \ref{thm-bdd-1}, \ref{thm-bdd-2}, \ref{thm-cpt}, and \ref{thm-cpt2},
respectively, to six concrete examples of ball Banach function spaces,
namely, Morrey spaces (see Subsection \ref{s5.1} below),
mixed-norm Lebesgue spaces (see Subsection \ref{s5.2} below),
variable Lebesgue spaces (see Subsection \ref{s5.3} below), weighted Lebesgue spaces
(see Subsection \ref{s5.4} below),
Orlicz spaces (see Subsection \ref{s5.5x} below),
and Orlicz-slice spaces (see Subsection \ref{s5.5} below). Observe that, among
these six examples, only variable Lebesgue spaces and Orlicz spaces are
Banach function spaces as in Remark \ref{ball-bounded}(ii),
while the other four examples are ball Banach function spaces, which are not necessary to be
Banach function spaces.

\subsection{Morrey spaces\label{s5.1}}

Recall that, due to the applications in elliptic partial differential equations,
the Morrey space $M_r^p(\rn)$ with $0<r\le p<\infty$
was introduced by Morrey \cite{Mo} in 1938.
In recent decades,
there exists an increasing interest in applications
of Morrey spaces to various areas of analysis
such as partial differential equations, potential theory, and
harmonic analysis; see, for instance, \cite{a15,a04,cf,JW,sdh20,sdh20II,tyy19,ysy}.

\begin{definition}
Let $0<r\le p<\infty$.
The \emph{Morrey space $M_r^p(\rn)$} is defined
to be the set of all measurable functions $f$ on $\rn$ such that
$$
\|f\|_{M_r^p(\rn)}:=\sup_{B\in\BB}|B|^{1/p-1/r}\|f\|_{L^p(B)}<\infty,
$$
where $\BB$ is as in \eqref{Eqball} (the set of all balls of $\rn$).
\end{definition}

\begin{remark}
Observe that, as was pointed out in \cite[p.\,86]{SHYY}, $M_r^p(\rn)$
may not be a Banach function space, but it is a ball
Banach function space as in Definition \ref{Debqfs}.
\end{remark}

Let $p\in(1,\fz)$ and $r\in(0,p]$. From \cite[Theorem 1]{cf},
it follows that the
Hardy--Littlewood maximal operator $\cm$ is bounded on $M_r^p(\rn)$.
Recall that the associate space of the Morrey space is the block space
(see, for instance, \cite[Theorem 4.1]{ST}) and
$\cm$ is bounded on block spaces
(see, for instance, \cite[Theorem 3.1]{ch14} and \cite[Lemma 5.7]{H15}).
Using these and Definition \ref{Debf}, we can easily show that, for any given $s\in(0,p)$,
$\cm$ is bounded on $(X)'$ and $(X^{1/s})'$, where $X:=M_r^p(\rn)$.
Thus, all the assumptions of the main theorems in Sections \ref{s2} and \ref{s3} are satisfied.
Using Theorems \ref{thm-bdd-1},
\ref{thm-cpt}, and \ref{thm-cpt2},
we obtain
the following characterization of the boundedness and the compactness of commutators on Morrey spaces,
respectively, via ${\rm BMO}(\rn)$ and ${\rm CMO}(\rn)$.

\begin{theorem}\label{bdd-Morrey}
Let $p\in(1,\fz)$ and $r\in(0,p]$.
Then Theorems \ref{thm-bdd-1}, \ref{thm-bdd-2},
\ref{thm-cpt}, and \ref{thm-cpt2} hold true with $X$ replaced by $M_r^p(\rn)$.
\end{theorem}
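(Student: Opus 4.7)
The plan is to verify Theorem \ref{bdd-Morrey} by a pure hypothesis-checking argument: establish that the Morrey space $M_r^p(\rn)$ (with $p\in(1,\infty)$ and $r\in(0,p]$) fits into the abstract framework by satisfying both parts of Assumption \ref{assum}, and then invoke Theorems \ref{thm-bdd-1}, \ref{thm-bdd-2}, \ref{thm-cpt}, and \ref{thm-cpt2} as black boxes. No genuinely new analysis is required; the four conclusions drop out of the general theory once the assumptions are in place.

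First, I would recall that $M_r^p(\rn)$ is a ball Banach function space (as noted in Remark in \cite[p.\,86]{SHYY}), so the ambient framework of Section \ref{s2} applies. Next, to verify Assumption \ref{assum}(i), I would cite \cite[Theorem 1]{cf} for the boundedness of $\cm$ on $M_r^p(\rn)$ itself, and then observe that the associate space $(M_r^p(\rn))'$ is the block space (see, e.g., \cite[Theorem 4.1]{ST}), on which $\cm$ is bounded by \cite[Theorem 3.1]{ch14} together with \cite[Lemma 5.7]{H15}. This handles the hypotheses of Theorems \ref{thm-bdd-2} and \ref{thm-cpt2}.

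For Assumption \ref{assum}(ii), I would pick any $s\in(1,p)$, which is possible precisely because $p\in(1,\infty)$. A direct computation from Definition \ref{Debf} shows that
\begin{equation*}
\bigl(M_r^p(\rn)\bigr)^{1/s}=M_{r/s}^{p/s}(\rn),
\end{equation*}
and since $p/s>1$ and $r/s\in(0,p/s]$, this convexification is again a Morrey space and hence a ball Banach function space. Its associate is again a block space (now with parameters $p/s$ and $r/s$), on which $\cm$ is bounded by the same block-space results. Together with the boundedness of $\cm$ on $M_r^p(\rn)$ itself, this verifies Assumption \ref{assum}(ii) and unlocks Theorems \ref{thm-bdd-1} and \ref{thm-cpt}. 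For Theorem \ref{thm-bdd-1} in particular, I would note that the required condition $q'\leq s$ simply constrains the range of admissible $\Omega\in L^q(\mathbb{S}^{n-1})$, and that $s$ can be taken as close to $p$ as desired, recovering the full range of $\Omega$ expected for Morrey spaces.

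The only mildly delicate point, and thus the place I would be most careful, is the identification $(M_r^p)^{1/s}=M_{r/s}^{p/s}$ and the corresponding index bookkeeping that keeps $p/s>1$ and $r/s>0$ so that the block-space maximal bound is actually applicable; everything else is a direct quotation of existing theorems. Once these indices are tracked, the four conclusions follow at once from the corresponding general theorems in Sections \ref{s2} and \ref{s3}, completing the proof of Theorem \ref{bdd-Morrey}.
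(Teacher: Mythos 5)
Your argument is correct and is essentially the paper's own proof: the paper likewise verifies Assumption \ref{assum} by citing \cite[Theorem 1]{cf} for the boundedness of $\cm$ on $M_r^p(\rn)$, \cite[Theorem 4.1]{ST} for the identification of the associate space with the block space, and \cite[Theorem 3.1]{ch14} together with \cite[Lemma 5.7]{H15} for the boundedness of $\cm$ on that space and on $\bigl((M_r^p(\rn))^{1/s}\bigr)'$ with $s\in(1,p)$, and then invokes Theorems \ref{thm-bdd-1}, \ref{thm-bdd-2}, \ref{thm-cpt}, and \ref{thm-cpt2} as black boxes. Your explicit convexification identity $(M_r^p(\rn))^{1/s}=M_{r/s}^{p/s}(\rn)$ is precisely the step the paper leaves implicit in ``using these and Definition \ref{Debf}, we can easily show.''
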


\begin{remark}\label{rem-bdd-Morrey}
\begin{itemize}
\item [{\rm(i)}]
The boundedness of commutators on Morrey spaces was first obtained by
Di Fazio and Ragusa \cite[Theorem 1]{dr91}.
Indeed, Di Fazio and Ragusa \cite{dr91} proved Theorem \ref{bdd-Morrey} under the assumption
that $\Omega\in{\rm Lip}(\mathbb{S}^{n-1})$ satisfies \eqref{deg0} and \eqref{mean0},
which is a spacial cases of Theorem \ref{bdd-Morrey}.

\item [{\rm(ii)}]
Let $p\in(1,\fz)$ and $r\in(0,p]$.
Theorem \ref{thm-cpt} with $X$ replaced by $M_r^p(\rn)$
was obtained by Chen et al. \cite[Theorem 1.1]{CDW12}.
On the other hand,
Chen et al. \cite[Theorem 1.2]{CDW12} showed the necessity
under the assumption that $\Omega\in{\rm Lip}(\mathbb{S}^{n-1})$
satisfies \eqref{deg0} and \eqref{mean0},
which is stronger than Theorem \ref{bdd-Morrey}.
\end{itemize}
\end{remark}

\subsection{Mixed-norm Lebesgue spaces\label{s5.2}}

The mixed-norm Lebesgue space $L^{\vec{p}}(\rn)$
was studied by Benedek and Panzone \cite{BP} in 1961, which can be traced
back to H\"ormander \cite{H1}. Later on, in 1970, Lizorkin \cite{l70} further developed both the theory of
multipliers of Fourier integrals and estimates of convolutions
in the mixed-norm Lebesgue spaces.
Particularly,
in order to meet the requirements arising in the study of the boundedness of operators,
partial differential equations, and some other analysis
subjects, the real-variable theory of mixed-norm
function spaces, including mixed-norm Morrey spaces,
mixed-norm Hardy spaces, mixed-norm Besov spaces,
and mixed-norm Triebel--Lizorkin spaces, has rapidly been developed
in recent years (see, for instance,
\cite{cgn17bs,gjn17,tn,noss20,hy,HLYY,hlyy19}).

\begin{definition}\label{mix}
Let $\vec{p}:=(p_1,\ldots,p_n)\in(0,\infty]^n$.
The \emph{mixed-norm Lebesgue space $L^{\vec{p}}(\rn)$} is defined
to be the set of all measurable functions $f$ on $\rn$ such that
$$
\|f\|_{L^{\vec{p}}(\rn)}:=\lf\{\int_{\rr}\cdots\lf[\int_{\rr}|f(x_1,\ldots,x_n)|^{p_1}\,dx_1\r]
^{\frac{p_2}{p_1}}\cdots\,dx_n\r\}^{\frac{1}{p_n}}<\infty
$$
with the usual modifications made when $p_i=\infty$ for some $i\in\{1,\ldots,n\}$.
\end{definition}

In this subsection, for any $\vec{p}:=(p_1,\ldots,p_n)\in(0,\infty)^n$, we always let
$p_-:= \min\{p_1, \ldots , p_n\}$ and  $p_+ := \max\{p_1, \ldots , p_n\}$.

Let $\vec{p}\in(1,\infty)^n$. Then $\cm$ is bounded on $L^{\vec{p}}(\rn)$
(see, for instance, \cite[Lemma 3.5]{HLYY}).
Applying this and the dual theorem (see, for instance, \cite[Theorem 1.a]{BP}),
we can easily show that, for any given $s\in(0,p_-)$,
$\cm$ is bounded on $X'$ and $(X^{1/s})'$, where $X:=L^{\vec{p}}(\rn)$.
Thus, all the assumptions of the main theorems in Sections \ref{s2} and \ref{s3} are satisfied.
Using Theorems \ref{thm-bdd-1},
\ref{thm-cpt}, and \ref{thm-cpt2},
we obtain
the following characterization of the boundedness
and the compactness of commutators on mixed-norm Lebesgue spaces.

\begin{theorem}\label{bdd-mixLp}
Let $\vec{p}:=(p_1,\ldots,p_n)\in(1,\infty)^n$.
Then Theorems \ref{thm-bdd-1}, \ref{thm-bdd-2},
\ref{thm-cpt}, and \ref{thm-cpt2} hold true with $X$ replaced by $L^{\vec{p}}(\rn)$.
\end{theorem}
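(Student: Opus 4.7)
The plan is to verify that $X:=L^{\vec{p}}(\rn)$ is a ball Banach function space satisfying both parts of Assumption \ref{assum}, after which Theorems \ref{thm-bdd-1}, \ref{thm-bdd-2}, \ref{thm-cpt}, and \ref{thm-cpt2} apply to $X$ with no further work. Thus the entire task reduces to bookkeeping on the structure of mixed-norm spaces.

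First, I would verify that $L^{\vec{p}}(\rn)$ with $\vec{p}\in(1,\infty)^n$ is a ball Banach function space in the sense of Definition \ref{Debqfs}. Properties (i), (ii), and (iii) follow from iteratively applying the corresponding properties of classical $L^{p_i}$-norms together with the Fubini and monotone convergence theorems; property (iv) and the local integrability $\int_B|f|\,dx \ls \|f\|_{L^{\vec{p}}(\rn)}$ for any ball $B$ follow from iterated H\"older inequalities, since each $p_i\in(1,\infty)$ is finite. The triangle inequality \eqref{eq22x} is a consequence of Minkowski's inequality applied in each variable.

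Next, for Assumption \ref{assum}(i), the boundedness of $\cm$ on $L^{\vec{p}}(\rn)$ is \cite[Lemma 3.5]{HLYY}. For the associate space, the mixed-norm duality \cite[Theorem 1.a]{BP} identifies $X'$ with $L^{\vec{p}\,'}(\rn)$, where $\vec{p}\,':=(p_1',\ldots,p_n')$; since $\vec{p}\in(1,\infty)^n$, also $\vec{p}\,'\in(1,\infty)^n$, and \cite[Lemma 3.5]{HLYY} applied to $\vec{p}\,'$ yields the boundedness of $\cm$ on $X'$. For Assumption \ref{assum}(ii), fix any $s\in(1,p_-)$; then $X^{1/s}=L^{\vec{p}/s}(\rn)$ with $\vec{p}/s\in(1,\infty)^n$, which is a ball Banach function space by the previous paragraph. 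Its associate space is $L^{(\vec{p}/s)'}(\rn)$, whose components $(p_i/s)'$ are all finite and greater than $1$ because $p_i/s>1$, so \cite[Lemma 3.5]{HLYY} once more delivers the boundedness of $\cm$ on $(X^{1/s})'$.

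Having verified both parts of Assumption \ref{assum}, I would simply invoke Theorems \ref{thm-bdd-1}, \ref{thm-bdd-2}, \ref{thm-cpt}, and \ref{thm-cpt2} with $X:=L^{\vec{p}}(\rn)$ to conclude. The main obstacle, such as it is, is the technical verification that the convexification and associate space of a mixed-norm Lebesgue space are again mixed-norm Lebesgue spaces with exponents in the admissible range; this is handled by citing \cite[Theorem 1.a]{BP} and inspecting the definition \eqref{mix}. No calculation specific to $\Omega$, $b$, or the commutator enters the argument, since those ingredients were already dealt with in Sections \ref{s2} and \ref{s3}.
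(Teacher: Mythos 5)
Your proposal is correct and follows essentially the same route as the paper: boundedness of $\cm$ on $L^{\vec{p}}(\rn)$ from \cite[Lemma 3.5]{HLYY}, the duality theorem \cite[Theorem 1.a]{BP} to identify the associate spaces of $X$ and of the convexification $X^{1/s}=L^{\vec{p}/s}(\rn)$ with $s\in(1,p_-)$, and then a direct appeal to Theorems \ref{thm-bdd-1}, \ref{thm-bdd-2}, \ref{thm-cpt}, and \ref{thm-cpt2}. The only difference is that you re-verify that mixed-norm spaces are ball Banach function spaces, which the paper takes as known from \cite{SHYY,WYYZ}.
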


\begin{remark}\label{rem-mixLp}
To the best of our knowledge, the results of
Theorem \ref{bdd-mixLp} are totally new.
\end{remark}

\subsection{Variable Lebesgue spaces\label{s5.3}}

Let $p(\cdot):\ \rn\to[0,\infty)$ be a measurable function. Then the \emph{variable Lebesgue space
$L^{p(\cdot)}(\rn)$} is defined to be the set of all measurable functions $f$ on $\rn$ such that
$$
\|f\|_{L^{p(\cdot)}(\rn)}:=\inf\lf\{\lambda\in(0,\infty):\ \int_\rn\lf[\frac{|f(x)|}{\lambda}\r]^{p(x)}\,dx\le1\r\}<\infty.
$$
We refer the reader to \cite{N1,N2,KR,CUF,DHR} for more details on variable Lebesgue spaces.

For any measurable function $p(\cdot):\ \rn\to(0,\infty)$, in this subsection, we let
$$
 \widetilde{p}_-:=\underset{x\in\rn}{\essinf}\,p(x)\quad\text{and}\quad
 \widetilde p_+:=\underset{x\in\rn}{\esssup}\,p(x).
$$
If $1<\widetilde p_-\le \widetilde p_+<\infty$, then, similarly to the proof of \cite[Theorem 3.2.13]{DHHR},
we know that $L^{p(\cdot)}(\rn)$ is
a Banach function space and hence a ball Banach function space.

A measurable function $p(\cdot):\ \rn\to(0,\infty)$ is said to be
\emph{globally log-H\"older continuous} if there exist a $p_{\infty}\in\rr$
and a positive constant $C$ such that, for any
$x,\ y\in\rn$,
$$
|p(x)-p(y)|\le C\frac{1}{\log(e+1/|x-y|)}
$$
and
$$
|p(x)-p_\infty|\le C\frac{1}{\log(e+|x|)}.
$$

Let $p(\cdot):\ \rn\to(0,\infty)$ be a globally
log-H\"older continuous function satisfying
$1<\widetilde p_-\le \widetilde p_+<\infty$.
Adamowicz et al. \cite[Theorem 1.7]{ahh} obtained the
boundedness of the Hardy--Littlewood maximal operator on variable Lebesgue spaces;
see also \cite{CUF,CUW}.
Furthermore, from this and the dual theorem
(see, for instance, \cite[Theorem 2.80]{CUF}),
we deduce that, for any given $s\in(0,\widetilde p_-)$ and
$q\in(\widetilde p_+,\infty]$, $M$ is bounded on $X'$ and $(X^{1/s})'$,
where $X:=L^{p(\cdot)}(\rn)$.
Thus, all the assumptions of the main theorems in Sections \ref{s2} and \ref{s3} are satisfied.
Using Theorems \ref{thm-bdd-1},
\ref{thm-cpt}, and \ref{thm-cpt2},
we obtain the following characterization of the boundedness
and the compactness of commutators on variable Lebesgue spaces.

\begin{theorem}\label{bdd-varLp}
Let $p(\cdot):\ \rn\to(0,\infty)$ be a globally
log-H\"older continuous function satisfying
$1<\widetilde p_-\leq \widetilde p_+<\infty$.
Then Theorems \ref{thm-bdd-1}, \ref{thm-bdd-2},
\ref{thm-cpt}, and \ref{thm-cpt2} hold true with $X$ replaced by $L^{p(\cdot)}(\rn)$.
\end{theorem}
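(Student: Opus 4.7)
The plan is to reduce Theorem \ref{bdd-varLp} to a routine verification of the hypotheses of Theorems \ref{thm-bdd-1}, \ref{thm-bdd-2}, \ref{thm-cpt}, and \ref{thm-cpt2} with $X:=L^{p(\cdot)}(\rn)$. All four of these source theorems have already been proved in full generality for ball Banach function spaces satisfying Assumption \ref{assum}(i) or (ii), so the only thing I need to do in this subsection is to check that, under the standing hypothesis $1<\widetilde p_-\le\widetilde p_+<\infty$ together with the global log-H\"older continuity of $p(\cdot)$, the space $L^{p(\cdot)}(\rn)$ is indeed a ball Banach function space, and that Assumption \ref{assum}(i) and (ii) both hold for it. Once these checks are in place, the statement is nothing more than a direct application of the four general theorems.

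First I would verify that $L^{p(\cdot)}(\rn)$ is a ball Banach function space. As already noted in the paragraph preceding Theorem \ref{bdd-varLp}, under the assumption $1<\widetilde p_-\le\widetilde p_+<\infty$, a near-verbatim repetition of the argument of \cite[Theorem 3.2.13]{DHHR} shows that $L^{p(\cdot)}(\rn)$ is a Banach function space in the classical sense of Bennett--Sharpley; by Remark \ref{ball-bounded}(ii) it is then a ball Banach function space, so condition (iv) of Definition \ref{Debqfs} and the triangle inequality \eqref{eq22x} are automatic.

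Next I would verify Assumption \ref{assum}(i). The boundedness of $\cm$ on $L^{p(\cdot)}(\rn)$ under global log-H\"older continuity is the content of \cite[Theorem 1.7]{ahh} (see also \cite{CUF,CUW}). For the boundedness of $\cm$ on the associate space $[L^{p(\cdot)}(\rn)]'$, I would use the duality identification $[L^{p(\cdot)}(\rn)]'=L^{p'(\cdot)}(\rn)$ with equivalent norms, where $p'(\cdot)$ is the pointwise conjugate exponent; this is a standard duality result recorded, for instance, in \cite[Theorem 2.80]{CUF}. The hypothesis $1<\widetilde p_-\le\widetilde p_+<\infty$ translates exactly into $1<\widetilde{p'}_-\le\widetilde{p'}_+<\infty$, and global log-H\"older continuity of $p(\cdot)$ transfers to $p'(\cdot)$; hence \cite[Theorem 1.7]{ahh} applies again to give the boundedness of $\cm$ on $L^{p'(\cdot)}(\rn)$, which yields Assumption \ref{assum}(i).

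Finally I would check Assumption \ref{assum}(ii). Choose any $s\in(1,\widetilde p_-)$, and set $q(\cdot):=p(\cdot)/s$. Then $1<\widetilde q_-\le\widetilde q_+<\infty$ and $q(\cdot)$ inherits global log-H\"older continuity from $p(\cdot)$. By Definition \ref{Debf} together with the definition of the Luxemburg-type norm, the $\frac1s$-convexification satisfies $X^{1/s}=L^{q(\cdot)}(\rn)$ with equivalent norms, so $X^{1/s}$ is again a ball Banach function space (by the argument of the first paragraph applied to $q(\cdot)$), and its associate space is $L^{q'(\cdot)}(\rn)$ with $q'(\cdot)$ still globally log-H\"older continuous and $1<\widetilde{q'}_-\le\widetilde{q'}_+<\infty$. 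A third application of \cite[Theorem 1.7]{ahh} gives the boundedness of $\cm$ on $(X^{1/s})'$, proving Assumption \ref{assum}(ii). With both assumptions verified, Theorems \ref{thm-bdd-1}, \ref{thm-bdd-2}, \ref{thm-cpt}, and \ref{thm-cpt2} apply directly, and Theorem \ref{bdd-varLp} follows. No genuine obstacle is anticipated; the only delicate point is that each invocation of \cite[Theorem 1.7]{ahh} requires verifying that the relevant exponent (namely $p(\cdot)$, $p'(\cdot)$, and $q'(\cdot)$) remains bounded away from $1$ and $\infty$ and globally log-H\"older continuous, which follows routinely from the hypotheses on $p(\cdot)$ and the choice $s\in(1,\widetilde p_-)$.
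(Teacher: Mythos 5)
Your proposal is correct and follows essentially the same route as the paper: the paper likewise verifies that $L^{p(\cdot)}(\rn)$ is a (ball) Banach function space via the argument of \cite[Theorem 3.2.13]{DHHR}, obtains the boundedness of $\cm$ on $X$ from \cite[Theorem 1.7]{ahh}, and then uses the duality theorem \cite[Theorem 2.80]{CUF} to get the boundedness of $\cm$ on $X'$ and on $(X^{1/s})'$ for $s\in(1,\widetilde p_-)$, after which the four general theorems apply directly. Your extra details (the exact identification $X^{1/s}=L^{p(\cdot)/s}(\rn)$ and the transfer of the log-H\"older condition and the bounds $1<\widetilde{p'}_-\le\widetilde{p'}_+<\infty$ to the conjugate exponents) are correct and merely make explicit what the paper leaves implicit.
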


\begin{remark}\label{rem-varLp}
The boundedness characterization of commutators on variable Lebesgue spaces
was first studied by Karlovich and Lerner \cite[Theorem 1.1]{KL05};
meanwhile, they point out in \cite[Remark 4.3]{KL05} that
the corresponding conclusion also holds true in Banach function spaces.
Moreover, Guo et al. \cite[Theorem 2.1]{glw} proved a generalization
for the necessity part in ball Banach function spaces, based on a weaker
assumption than \cite[Theorem 1.1(b)]{KL05}.
As for the compactness characterization, to the best of our knowledge,
the corresponding results of Theorem \ref{bdd-varLp} is totally new.
\end{remark}

\subsection{Weighted Lebesgue spaces\label{s5.4}}

It is worth pointing out that a weighted Lebesgue space with an $A_\infty(\rn)$-weight may
not be a Banach function space; see \cite[Section 7.1]{SHYY}.
From \cite[Theorem 3.1(b)]{AJ}, it follows that, for any $p\in(1,\fz)$,
\begin{align}\label{Mbdd-Ap}
\cm {\rm\ \ is\ bounded\ on\ } L^p_\oz(\rn) {\rm\ if\ and\ only\ if\ } \oz\in A_p(\rn).
\end{align}
Therefore, $L^p_\omega(\rn)$ satisfies Assumption \ref{as1}
for any given $p\in(1,\fz)$ and $\oz\in A_p(\rn)$.
Moreover, from \cite[Theorem 2.7.4]{DHHR},
we deduce that, when $p\in(1,\infty)$ and $\omega\in A_\infty(\rn)$,
$$\lf[L_\omega^p(\rn)\r]'=L^{p'}_{\omega^{1-p'}}(\rn),$$
where $[L_\omega^p(\rn)]'$ denotes the associated space of $L_\omega^p(\rn)$
as in \eqref{asso} with $X:=L^p_\omega(\rn)$.
By this, \eqref{Mbdd-Ap}, and the observation that
$$\oz\in A_p(\rn){\rm\ if\ and\ only\ if\ } \oz^{1-p'}\in A_{p'}(\rn),$$
we conclude that $\cm$ is bounded on $[L_\omega^p(\rn)]'$.
Furthermore, since $A_p(\rn)=\bigcup_{q\in(1,p)}A_q(\rn)$
(see, for instance, \cite[p.\,139, Corollary 7.9(a)]{d}),
it follows that, for any $\oz\in A_p(\rn)$,
there exists some $s\in(1,\fz)$ such that
$\oz\in A_{p/s}(\rn)$, and hence $\oz^{1-(p/s)'}\in A_{(p/s)'}(\rn)$.
By this, \eqref{Mbdd-Ap}, and the fact that
$$\lf(\lf[L_\omega^p(\rn)\r]^{1/s}\r)'
=\lf[L_\omega^{p/s}(\rn)\r]'=L^{(p/s)'}_{\omega^{1-(p/s)'}}(\rn),$$
we conclude that $\cm$ is bounded on $([L^p_\oz(\rn)]^{1/s})'$.
Thus, all the assumptions of the main theorems in Sections \ref{s2} and \ref{s3} are satisfied.
Using Theorems \ref{thm-bdd-1},
\ref{thm-cpt}, and \ref{thm-cpt2},
we immediately obtain the following characterization of the boundedness
and the compactness of commutators on weighted Lebesgue spaces.

\begin{theorem}\label{bdd-wtdLp}
Let $p\in(1,\infty)$ and $\omega\in A_p(\rn)$.
Then Theorems \ref{thm-bdd-1}, \ref{thm-bdd-2},
\ref{thm-cpt}, and \ref{thm-cpt2} hold true with $X$ replaced by $L_\omega^p(\rn)$.
\end{theorem}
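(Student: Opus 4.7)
The plan is direct: to verify that $X:=L_\omega^p(\rn)$ is a ball Banach function space satisfying both parts of Assumption \ref{assum}, and then invoke Theorems \ref{thm-bdd-1}, \ref{thm-bdd-2}, \ref{thm-cpt}, and \ref{thm-cpt2} with this choice of $X$. All the weight-theoretic ingredients I will need were already collected in the paragraph immediately preceding the theorem statement.

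First, I would note that, for any $p\in(1,\infty)$ and any $\omega\in A_p(\rn)\subset A_\infty(\rn)$, the space $L_\omega^p(\rn)$ is a ball Banach function space in the sense of Definition \ref{Debqfs}: properties (i), (ii), and (iii) follow at once from the pointwise definition of the weighted norm and the monotone convergence theorem, property (iv) follows from $\omega\in L^1_{\loc}(\rn)$ together with the local integrability of $\omega^{-1/(p-1)}$ guaranteed by $\omega\in A_p(\rn)$, and the triangle inequality \eqref{eq22x} is just Minkowski's inequality in $L^p_\omega(\rn)$.

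Second, to verify Assumption \ref{assum}(i), I would apply \eqref{Mbdd-Ap} with weight $\omega$ and exponent $p$ to obtain the boundedness of $\mathcal{M}$ on $X=L_\omega^p(\rn)$. For the associate space, combining the identification $[L_\omega^p(\rn)]'=L^{p'}_{\omega^{1-p'}}(\rn)$ recalled in the excerpt with the classical equivalence $\omega\in A_p(\rn)\iff \omega^{1-p'}\in A_{p'}(\rn)$, and applying \eqref{Mbdd-Ap} again (now with weight $\omega^{1-p'}$ and exponent $p'$), I obtain the boundedness of $\mathcal{M}$ on $X'$.

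Third, to verify Assumption \ref{assum}(ii), I would use the self-improvement property $A_p(\rn)=\bigcup_{q\in(1,p)}A_q(\rn)$ to select some $s\in(1,\infty)$ with $\omega\in A_{p/s}(\rn)$. By Definition \ref{Debf}, $X^{1/s}=L^{p/s}_\omega(\rn)$; since $p/s\in(1,\infty)$ and $\omega\in A_{p/s}(\rn)\subset A_\infty(\rn)$, the previous paragraph shows $X^{1/s}$ is a ball Banach function space whose associate space is $L^{(p/s)'}_{\omega^{1-(p/s)'}}(\rn)$ with $\omega^{1-(p/s)'}\in A_{(p/s)'}(\rn)$, so \eqref{Mbdd-Ap} once more gives the boundedness of $\mathcal{M}$ on $(X^{1/s})'$. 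Both parts of Assumption \ref{assum} are now in place, and Theorems \ref{thm-bdd-1}, \ref{thm-bdd-2}, \ref{thm-cpt}, and \ref{thm-cpt2} apply verbatim with $X=L_\omega^p(\rn)$. There is no genuine obstacle; the entire argument is a clean bookkeeping of standard $A_p$ facts, since the abstract work has already been done in Sections \ref{s2} and \ref{s3}.
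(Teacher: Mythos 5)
Your proposal is correct and follows essentially the same route as the paper: verify the boundedness of $\mathcal{M}$ on $L^p_\omega(\rn)$, on its associate space $L^{p'}_{\omega^{1-p'}}(\rn)$, and on $([L^p_\omega(\rn)]^{1/s})'=L^{(p/s)'}_{\omega^{1-(p/s)'}}(\rn)$ using \eqref{Mbdd-Ap}, the duality identification, and the openness property $A_p(\rn)=\bigcup_{q\in(1,p)}A_q(\rn)$, and then apply the four abstract theorems. Your explicit check that $L^p_\omega(\rn)$ is a ball Banach function space (via $\omega,\ \omega^{1-p'}\in L^1_{\loc}(\rn)$) is a routine detail the paper takes as known from the literature, so there is no substantive difference.
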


\begin{remark}\label{rem-wtdLp}
The boundedness characterization of Theorem \ref{bdd-wtdLp}
was obtained in \cite[p.\,129, Theorem 2.4.3]{ldy},
and the compactness characterization of Theorem \ref{bdd-wtdLp}
was obtained in \cite[Theorems 1.4 and 1.5]{gwy20}.
\end{remark}

\subsection{Orlicz spaces\label{s5.5x}}

Birnbaum and Orlicz \cite{bo31} (see also Orlicz \cite{o32}) introduced
the Orlicz space which is another generalization of $L^p(\rn)$.
Since then,  Orlicz spaces have been well developed and widely used in
harmonic analysis, partial differential equations, potential theory,
probability, and some other fields of analysis;
see, for instance, \cite{aikm00,mw08,RR} and their references.

First, we recall the notions of both Orlicz functions and Orlicz spaces.

\begin{definition}\label{or}
A function $\Phi:\ [0,\infty)\ \to\ [0,\infty)$ is called an \emph{Orlicz function} if it is
non-decreasing and satisfies $\Phi(0)= 0$, $\Phi(t)>0$ whenever $t\in(0,\infty)$, and $\lim_{t\to\infty}\Phi(t)=\infty$.
\end{definition}

An Orlicz function $\Phi$ as in Definition \ref{or} is said to be
of \emph{lower} (resp., \emph{upper}) \emph{type} $p$ with
$p\in\rr$ if
there exists a positive constant $C_{(p)}$, depending on $p$, such that, for any $t\in[0,\infty)$
and $s\in(0,1)$ [resp., $s\in [1,\infty)$],
\begin{equation*}
\Phi(st)\le C_{(p)}s^p \Phi(t).
\end{equation*}
A function $\Phi:\ [0,\infty)\ \to\ [0,\infty)$ is said to be of
 \emph{positive lower} (resp., \emph{upper}) \emph{type} if it is of lower
 (resp., upper) type $p$ for some $p\in(0,\infty)$.
\begin{definition}\label{fine}
Let $\Phi$ be an Orlicz function with positive lower type $p_{\Phi}^-$ and positive upper type $p_{\Phi}^+$.
The \emph{Orlicz space $L^\Phi(\rn)$} is defined
to be the set of all measurable functions $f$ on $\rn$ such that
 $$\|f\|_{L^\Phi(\rn)}:=\inf\lf\{\lambda\in(0,\infty):\ \int_{\rn}\Phi\lf(\frac{|f(x)|}{\lambda}\r)\,dx\le1\r\}<\infty.$$
\end{definition}

It is well known that, if $p_{\Phi}^-$, $p_{\Phi}^+\in(1,\fz)$,
then the dual space of $L^\Phi(\rn)$ is $L^\Psi(\rn)$,
where $\Psi$ denotes the \emph{complementary function}
defined by setting $\Psi(t):=\sup\{xt-\Phi(x):\ x\in[0,\fz)\}$ for any $t\in[0,\fz)$
(see \cite[Definition 2.14]{ZYYW}) of $\Phi$.
Moreover, $L^\Phi(\rn)$ is
a Banach function space and hence a ball Banach function space.
Furthermore, $\cm$ is bounded on $L^\Phi(\rn)$, $L^\Psi(\rn)$, and
$([L^\Phi(\rn)]^{1/s})'$ for some $s\in(0,1)$.
These basic properties can be found in, for instance, \cite[Subsection 7.6]{SHYY}.
Thus, all the assumptions of the main theorems in Sections \ref{s2} and \ref{s3} are satisfied.
Using Theorems \ref{thm-bdd-1},
\ref{thm-cpt}, and \ref{thm-cpt2},
we immediately obtain the following characterization of the boundedness
and the compactness of commutators on Orlicz spaces.

\begin{theorem}\label{bdd-Orlicz}
Let $p_{\Phi}^-,\ p_{\Phi}^+\in(1,\infty)$ and
$\Phi$ be an Orlicz function with positive lower type $p_{\Phi}^-$
and positive upper type $p_{\Phi}^+$.
Then Theorems \ref{thm-bdd-1}, \ref{thm-bdd-2},
\ref{thm-cpt}, and \ref{thm-cpt2} hold true with $X$ replaced by $L^\Phi(\rn)$.
\end{theorem}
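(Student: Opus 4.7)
The plan is to reduce Theorem \ref{bdd-Orlicz} to a verification of Assumption \ref{assum} (both (i) and (ii)) for $X:=L^\Phi(\rn)$, after which Theorems \ref{thm-bdd-1}, \ref{thm-bdd-2}, \ref{thm-cpt}, and \ref{thm-cpt2} apply verbatim. The four assertions then follow as immediate corollaries, exactly parallel to the arguments already used for Morrey, mixed-norm Lebesgue, variable Lebesgue, and weighted Lebesgue spaces in the preceding subsections.

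First I would recall from the standard theory of Orlicz spaces (e.g.\ \cite{RR}) that, since $\Phi$ has positive lower type $p_\Phi^-$ and positive upper type $p_\Phi^+$ with both lying in $(1,\infty)$, the Luxemburg norm $\|\cdot\|_{L^\Phi(\rn)}$ is a genuine norm satisfying the triangle inequality, and $L^\Phi(\rn)$ is complete. Verifying the four defining axioms of Definition \ref{Debqfs} is routine: positivity, lattice property, and the Fatou property (iii) all follow directly from the definition of the Luxemburg norm, while $\mathbf{1}_B\in L^\Phi(\rn)$ for every $B\in\BB$ because $\Phi$ is everywhere finite. Together with the triangle inequality, this shows $L^\Phi(\rn)$ is a ball Banach function space. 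The local integrability bound $\int_B |f|\,dx\lesssim C_{(B)}\|f\|_{L^\Phi(\rn)}$ comes from the Hölder inequality for Orlicz spaces paired with $\mathbf{1}_B\in L^\Psi(\rn)$, where $\Psi$ is the complementary function.

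Next, the core analytic input. By \cite[Subsection 7.6]{SHYY} (or classical results going back to Kita and others), the upper type $p_\Phi^+<\infty$ together with $p_\Phi^->1$ implies that $\cm$ is bounded on $L^\Phi(\rn)$, and the same applies to the complementary Orlicz space $L^\Psi(\rn)$ since $\Psi$ also has lower and upper types in $(1,\infty)$. Because $(L^\Phi(\rn))'=L^\Psi(\rn)$ up to equivalent norms, Assumption \ref{assum}(i) is satisfied. For Assumption \ref{assum}(ii) I would choose any $s\in(1,p_\Phi^-)$. Then $\Phi_s(t):=\Phi(t^{1/s})$ is again an Orlicz function with positive lower type $p_\Phi^-/s>1$ and positive upper type $p_\Phi^+/s$, so $(L^\Phi(\rn))^{1/s}=L^{\Phi_s}(\rn)$ is itself a ball Banach function space, and the associate space $((L^\Phi(\rn))^{1/s})'=L^{\Psi_s}(\rn)$ (with $\Psi_s$ the complementary function of $\Phi_s$) supports the boundedness of $\cm$ by the same Kita-type criterion, since $\Psi_s$ likewise has both types in $(1,\infty)$.

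With Assumption \ref{assum}(i) and (ii) both verified, Theorems \ref{thm-bdd-1}, \ref{thm-bdd-2}, \ref{thm-cpt}, and \ref{thm-cpt2} apply directly to $X=L^\Phi(\rn)$, which yields Theorem \ref{bdd-Orlicz}. The main (and essentially only) obstacle is the quantitative bookkeeping for the $1/s$-convexification: one must confirm that the induced lower/upper types of $\Phi_s$ and $\Psi_s$ genuinely remain in $(1,\infty)$ for some admissible $s$, so that the Kita-type boundedness of $\cm$ on the associate space of the convexification is applicable. This is where the strict inequality $p_\Phi^->1$ is essential, permitting a choice $s\in(1,p_\Phi^-)$, and this is the only place where the range hypotheses on $p_\Phi^\pm$ get used beyond the direct duality.
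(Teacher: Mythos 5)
Your proposal is correct and follows essentially the same route as the paper: verify that $L^\Phi(\rn)$ is a ball Banach function space satisfying Assumption \ref{assum}(i) and (ii) (via the complementary function $\Psi$, the identification of the associate space, and the boundedness of $\cm$ on $L^\Phi(\rn)$, $L^\Psi(\rn)$, and $([L^\Phi(\rn)]^{1/s})'$), and then invoke Theorems \ref{thm-bdd-1}, \ref{thm-bdd-2}, \ref{thm-cpt}, and \ref{thm-cpt2}. The only difference is presentational: the paper outsources these facts to \cite[Subsection 7.6]{SHYY}, whereas you spell out the convexification bookkeeping (choosing $s\in(1,p_\Phi^-)$ and tracking the types of $\Phi_s$ and $\Psi_s$), which is exactly the content of the cited reference.
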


\begin{remark}\label{rem-Orlicz}
To the best of our knowledge,
the results of Theorem \ref{bdd-Orlicz} are totally new.
\end{remark}

\subsection{Orlicz-slice spaces\label{s5.5}}

Now, we recall the notion of Orlicz-slice spaces.
\begin{definition}\label{so}
Let $t,\ r\in(0,\infty)$ and $\Phi$ be an Orlicz function with positive lower type $p_{\Phi}^-$ and
positive upper type $p_{\Phi}^+$. The \emph{Orlicz-slice space} $(E_\Phi^r)_t(\rn)$
is defined to be the set of all measurable functions $f$ on $\rn$
such that
$$
\|f\|_{(E_\Phi^r)_t(\rn)}
:=\lf\{\int_{\rn}\lf[\frac{\|f\mathbf{1}_{B(x,t)}\|_{L^\Phi(\rn)}}
{\|\mathbf{1}_{B(x,t)}\|_{L^\Phi(\rn)}}\r]^r\,dx\r\}^{\frac{1}{r}}<\infty.
$$
\end{definition}

\begin{remark}
By \cite[Lemma 2.28]{ZYYW}, we know that the Orlicz-slice space $(E_\Phi^r)_t(\rn)$
is a ball Banach function space, but it may not be a Banach function space
(see, for instance, \cite[Remark 7.41(i)]{ZWYY}).
\end{remark}

The Orlicz-slice space was introduced by Zhang et al. \cite{ZYYW},
which is a generalization of the slice spaces proposed by
Auscher and Mourgoglou \cite{AM2014,APA} and Wiener amalgam space
in \cite{h75,knt,h19}.
Let $t\in(0,\fz),\ r\in(1,\infty)$, and $\Phi$ be an Orlicz function
with positive lower type $p_{\Phi}^-\in(1,\fz)$ and
positive upper type $p_{\Phi}^+\in(1,\fz)$.
Then $\cm$ is bounded on $(E_\Phi^r)_t(\rn)$
with the implicit positive constant independent of $t$
(see \cite[Proposition 2.22]{ZYYW}).
Besides, from \cite[Theorem 2.26]{ZYYW}, it follows that
$$\lf[(E_\Phi^r)_t(\rn)\r]'=(E_\Psi^{r'})_t(\rn),$$
where $\Psi$ is the complementary function of $\Phi$.
By this, \cite[Proposition 2.22 and Lemma 4.4]{ZYYW}, we conclude that
$\cm$ is bounded on $[(E_\Phi^r)_t(\rn)]'$ and
$([(E_\Phi^r)_t(\rn)]^{1/s})'$ for some $s\in(0,1)$.
Thus, all the assumptions of the main theorems in Sections \ref{s2} and \ref{s3} are satisfied.
Using Theorems \ref{thm-bdd-1},
\ref{thm-cpt}, and \ref{thm-cpt2},
we immediately obtain the following characterization of the boundedness
and the compactness of commutators on Orlicz-slice spaces.

\begin{theorem}\label{bdd-Slice}
Let $t\in(0,\infty)$, $r,\ p_{\Phi}^-,\ p_{\Phi}^+\in(1,\infty)$,
and $\Phi$ be an Orlicz function with positive lower type $p_{\Phi}^-$
and positive upper type $p_{\Phi}^+$.
Then Theorems \ref{thm-bdd-1}, \ref{thm-bdd-2},
\ref{thm-cpt}, and \ref{thm-cpt2} hold true with $X$ replaced by $(E_\Phi^r)_t(\rn)$.
\end{theorem}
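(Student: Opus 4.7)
The plan is to reduce the statement to Theorems \ref{thm-bdd-1}, \ref{thm-bdd-2}, \ref{thm-cpt}, and \ref{thm-cpt2} by verifying that the Orlicz-slice space $X:=(E_\Phi^r)_t(\rn)$ satisfies both parts of Assumption \ref{assum} under the present hypotheses; once this is done, each of the four conclusions is immediate, mirroring the way the other five examples in Section \ref{s5} were handled.

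First I would invoke \cite[Lemma 2.28]{ZYYW} to conclude that $X$ is a ball Banach function space, and \cite[Proposition 2.22]{ZYYW} (whose constant is independent of $t$) to obtain the boundedness of $\cm$ on $X$. For the associate space, \cite[Theorem 2.26]{ZYYW} identifies $X'$ with $(E_\Psi^{r'})_t(\rn)$, where $\Psi$ denotes the complementary Orlicz function of $\Phi$. Because $\Phi$ has positive lower type $p_{\Phi}^->1$ and positive upper type $p_{\Phi}^+<\infty$, standard Young duality shows that $\Psi$ is an Orlicz function whose own positive lower and upper types lie strictly inside $(1,\infty)$; together with $r'\in(1,\infty)$, another application of \cite[Proposition 2.22]{ZYYW} yields the boundedness of $\cm$ on $X'$. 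This verifies Assumption \ref{assum}(i).

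For Assumption \ref{assum}(ii), I would choose $s\in(1,\min\{r,p_{\Phi}^-\})$, a nonempty interval by the standing hypothesis $r,p_{\Phi}^-\in(1,\infty)$. A direct Luxemburg-norm computation shows that the $\frac{1}{s}$-convexification satisfies $X^{1/s}=(E_{\Phi_s}^{r/s})_t(\rn)$, where $\Phi_s(u):=\Phi(u^{1/s})$ is itself an Orlicz function of positive lower type $p_{\Phi}^-/s>1$ and positive upper type $p_{\Phi}^+/s$. With $r/s>1$, the same two results from \cite{ZYYW} as above show that $X^{1/s}$ is a ball Banach function space and that $\cm$ is bounded on $(X^{1/s})'$, which by \cite[Theorem 2.26]{ZYYW} coincides with $(E_{\Psi_s}^{(r/s)'})_t(\rn)$ for $\Psi_s$ the complementary function of $\Phi_s$. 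Having verified both parts of Assumption \ref{assum}, Theorems \ref{thm-bdd-1}, \ref{thm-bdd-2}, \ref{thm-cpt}, and \ref{thm-cpt2} apply directly to $X$ and deliver the four claims.

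The main obstacle I anticipate is bookkeeping rather than new analysis: one must carefully check that the convexification identity $X^{1/s}\cong(E_{\Phi_s}^{r/s})_t(\rn)$ is compatible with the Luxemburg norm used to define the outer $L^r$ and inner $L^\Phi$ layers, and that after passing from $\Phi$ to $\Phi_s$ all relevant type and summability indices remain strictly inside $(1,\infty)$. No new harmonic analysis is required beyond the three quoted structural results from \cite{ZYYW}; the content of the theorem is precisely that the Orlicz-slice space is yet another instance of the class of ball Banach function spaces to which the abstract machinery of Sections \ref{s2} and \ref{s3} applies.
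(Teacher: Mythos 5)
Your proposal is correct and follows essentially the same route as the paper: the paper likewise verifies Assumption \ref{assum} for $(E_\Phi^r)_t(\rn)$ by citing \cite[Proposition 2.22, Theorem 2.26 and Lemma 4.4]{ZYYW} (boundedness of $\cm$ on the space, the duality $[(E_\Phi^r)_t(\rn)]'=(E_\Psi^{r'})_t(\rn)$, and the convexification) and then invokes Theorems \ref{thm-bdd-1}, \ref{thm-bdd-2}, \ref{thm-cpt}, and \ref{thm-cpt2}. The only difference is that you carry out explicitly (the identity $X^{1/s}=(E_{\Phi_s}^{r/s})_t(\rn)$ with $s\in(1,\min\{r,p_\Phi^-\})$ and the type bookkeeping for the complementary functions) what the paper delegates to the cited lemmas, which is a harmless elaboration.
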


\begin{remark}\label{rem-Slice}
To the best of our knowledge,
the results of Theorem \ref{bdd-Slice} are totally new,
even for the slice spaces in \cite{AM2014,APA}, namely,
$(E_\Phi^r)_t(\rn)$ with $\Phi(u):=u^p$ for any $u\in[0,\fz)$ and $p\in(1,\fz)$.
\end{remark}

\smallskip

\noindent\textbf{Acknowledgements}\quad The authors would like to thank
Professor Huoxiong Wu for some useful discussions on the subject of this article.

%%%%%%%%%%%%%%%%%%%%%%%%%%%%%%%%%%%%%%%%%%%%%%%%%%%%%%%%%%%%%%%%%%%%%%%%%%%%%%%%%%

%%%%%%%%%%%%%%%%%%%%%%%%%%%%%%%%%%%%%%%%%%%%%%%%%%%%%%%%%%%%%%%%%%%%%%%%%%%%%%%%%%

\bigskip

\noindent Jin Tao, Dachun Yang (Corresponding author),
Wen Yuan and Yangyang Zhang

\smallskip

\noindent  Laboratory of Mathematics and Complex Systems
(Ministry of Education of China),
School of Mathematical Sciences, Beijing Normal University,
Beijing 100875, People's Republic of China

\smallskip

\noindent {\it E-mails}: \texttt{jintao@mail.bnu.edu.cn} (J. Tao)

\noindent\phantom{{\it E-mails:}} \texttt{dcyang@bnu.edu.cn} (D. Yang)

\noindent\phantom{{\it E-mails:}} \texttt{wenyuan@bnu.edu.cn} (W. Yuan)

\noindent\phantom{{\it E-mails:}} \texttt{yangyzhang@mail.bnu.edu.cn} (Y. Zhang)

\end{document}